
\documentclass[11pt,a4paper,reqno]{amsart}

\usepackage{geometry}
\usepackage{ stmaryrd }

\geometry{verbose,tmargin=3cm,bmargin=3cm,
lmargin=3cm,rmargin=3cm,headheight=0cm,headsep=0cm,footskip=0.7cm}

\pagestyle{plain}

\usepackage{hyperref}

\usepackage[T1]{fontenc}
\usepackage[latin1]{inputenc}

\usepackage{setspace}
\setlength\parskip{\medskipamount}
\setlength\parindent{0pt}
\usepackage{indentfirst}





\usepackage{amsmath,amssymb}

\usepackage{stmaryrd}

\usepackage{graphicx}
\usepackage[all]{xy}
\def\dar[#1]{\ar@<2pt>[#1]\ar@<-2pt>[#1]}


\usepackage{amsthm}

\theoremstyle{plain}
\newtheorem{prop}{Proposition}[section]
\newtheorem{lem}[prop]{Lemma}

\newtheorem{cor}[prop]{Corollary}
\newtheorem{thm}[prop]{Theorem}

\newtheorem*{prop*}{Proposition}
\newtheorem*{lem*}{Lemma}
\newtheorem*{sublem*}{Sublemma}
\newtheorem*{cor*}{Corollary}
\newtheorem*{thm*}{Theorem}
\newtheorem*{hypo*}{Hypothesis}
\newtheorem*{question*}{Question}
\newtheorem*{conjecture*}{Conjecture}
\newtheorem*{scholum*}{Scholum}
\newtheorem{defn}[prop]{Definition}
\newtheorem*{defn*}{Definition}

\theoremstyle{definition}

\newtheorem*{con*}{Construction}
\newtheorem*{note*}{Note}

\theoremstyle{remark}

\newtheorem*{warning*}{Warning}
\newtheorem*{shortnote*}{Note}
\newtheorem*{claim*}{Claim}
\newtheorem*{axiom*}{Axiom}

\newtheoremstyle{slanted}
  {3pt}
  {3pt}
  {\slshape}
  {}
  {\bfseries}
  {.}
  {.5em}
  {}

\theoremstyle{slanted}
\newtheorem{example}[prop]{Example}
\newtheorem{examples}[prop]{Examples}
\newtheorem*{example*}{Example}
\newtheorem*{examples*}{Examples}

\newtheorem*{ex*}{Example}
\newtheorem*{exs*}{Examples}
\newtheorem{remark}[prop]{Remark}

\newtheorem*{remark*}{Remark}
\newtheorem*{remarks*}{Remarks}

\newtheorem*{rmk*}{Remark}
\newtheorem*{rmks*}{Remarks}


\DeclareMathOperator{\pr}{pr}

\DeclareMathOperator{\RE}{Re} 
\DeclareMathOperator{\IM}{Im} 

\DeclareMathOperator{\Id}{Id}



\newcommand{\beq}[1]{\begin{equation}\label{#1}}
\newcommand{\eeq}{\end{equation}}



\newcommand{\CC}{\mathbb{C}}



\newcommand{\lie}[2]{[#1,#2]} 

\newcommand{\oplie}{\mathfrak{A}}
\newcommand{\Glanon}{Glanon }

\newcommand{\rond}{\circ}
\newcommand{\smalcirc}{\mbox{\tiny{$\circ$}}}
\newcommand{\cc}[1]{\overline{#1}} 

\newcommand{\sections}[1]{\Gamma(#1)}



\newcommand{\imaginary}{\mathbf{i}}

\newcommand{\bemol}{^{\flat}}
\newcommand{\inv}{^{-1}}

\newcommand{\TC}{T_{\mathbb{C}}}


\newcommand{\gm}{\Gamma}

\newcommand{\hf}[1]{\mathcal{O}_{#1}} 
\newcommand{\shs}[1]{\mathcal{#1}} 
\newcommand{\com}{\bullet} 
\newcommand{\be }{\begin{eqnarray*}}
\newcommand{\ee }{\end{eqnarray*}}

\newcommand{\lon }{\longrightarrow }

\newcommand{\C}{{\mathbb{C}}}

\newcommand{\J}{{\mathcal{J}}}
\newcommand{\tg}{{\mathsf{t}}}
\newcommand{\s}{{\mathsf{s}}}
\newcommand{\m}{{\mathsf{m}}}
\newcommand{\rr}{{\rightrightarrows}}
\newcommand{\T}{{\mathbb{T}}}
\newcommand{\I}{{\mathcal{I}}}
\newcommand{\R}{\mathbb{R}}

\newcommand{\mx}{\mathfrak{X}}
\newcommand{\li}[1]{\mathfrak{#1}}
\newcommand{\dr}{\mathbf{d}}
\newcommand{\ldr}[1]{{{\pounds}}_{#1}}
\newcommand{\ip}[1]{{\mathbf{i}}_{#1}}
\newcommand{\an}[1]{\arrowvert_{#1}}




\allowdisplaybreaks[1]

\begin{document}

\title{Glanon groupoids}
\thanks{Partially supported by a Dorothea-Schl\"ozer fellowship of the
  University of G\"ottingen, Swiss NSF grant 200021-121512, NSF grants
  DMS-0801129 and DMS-1101827, NSA grant H98230-12-1-0234.}

\author{Madeleine Jotz}
\address{Mathematisches Institut, Georg-August Universit\"at G\"ottingen} 
\email{\href{mailto:mjotz@uni-math.gwdg.de}{\texttt{mjotz@uni-math.gwdg.de}}}

\author{Mathieu Sti\'enon}
\address{Department of Mathematics, Penn State University} 
\email{\href{mailto:stienon@math.psu.edu}{\texttt{stienon@math.psu.edu}}}

\author{Ping Xu}
\address{Department of Mathematics, Penn State University} 
\email{\href{mailto:ping@math.psu.edu}{\texttt{ping@math.psu.edu}}}
\maketitle

\begin{abstract} 
  We introduce the notion of Glanon groupoids, which are Lie groupoids
  equipped with multiplicative generalized complex structures.  It
  combines symplectic groupoids, holomorphic Lie groupoids and
  holomorphic Poisson groupoids into a unified framework.  Their
  infinitesimal, Glanon Lie algebroids are studied.  We prove that
  there is a bijection between Glanon Lie algebroids and source-simply
  connected and source-connected Glanon groupoids.  As a consequence,
  we recover various integration theorems and obtain the integration
  theorem for holomorphic Poisson groupoids.
 \end{abstract}


\section{Introduction}
In their study of quantization theory, Karasev \cite{Karasev89},
Weinstein \cite{Weinstein87}, and Zakrzewski \cite{Zakrzewski90a,
  Zakrzewski90b} independently introduced the notion of symplectic
groupoids.  By a symplectic groupoid, we mean a Lie groupoid equipped
with a multiplicative symplectic two-form on the space of morphisms.
It is a classical theorem that the unit space of a symplectic groupoid
is naturally a Poisson manifold \cite{CoDaWe87}.  Indeed the Lie
algebroid of a symplectic groupoid $\Gamma \rr P$ is naturally
isomorphic to $(T^*P)_\pi$, the canonical Lie algebroid associated to
the Poisson manifold $(P, \pi)$. In \cite{MaXu00}, Mackenzie and one
of the authors proved that, for a given Poisson manifold $(P, \pi)$,
if the Lie algebroid $(T^*P)_\pi$ integrates to a $\s$-connected and
$\s$-simply connected Lie groupoid $\Gamma \rr P$, then $\Gamma$ is
naturally a symplectic groupoid. The symplectic groupoid structure on
$\Gamma$ was also obtained by Cattaneo-Felder \cite{CaFe01} using the
Poisson sigma model.  The general integration condition for symplectic
groupoids was completely solved recently by Crainic-Fernandes
\cite{CrFe04}.

Recently, there has been increasing interest in holomorphic Lie
algebroids and holomorphic Lie groupoids.  It is a very natural
question to find the integrability condition for holomorphic Lie
algebroids.  In \cite{LaStXu08}, together with Laurent-Gengoux, two of
the authors studied holomorphic Lie algebroids and their relation with
real Lie algebroids. In particular, they proved that associated to any
holomorphic Lie algebroid $A$, there is a canonical real Lie algebroid
$A_R$ such that the inclusion $\shs{A}\to \shs{A}_\infty$ is a
morphism of sheaves.  Here $\shs{A}$ and $\shs{A}_\infty$ denote,
respectively, the sheaf of holomorphic sections and the sheaf of
smooth sections of $A$.  In other words, a holomorphic Lie algebroid
can be considered as a holomorphic vector bundle $A\to X$ whose
underlying real vector bundle is endowed with a Lie algebroid
structure such that, for any open subset $U\subset X$,
$\lie{\shs{A}(U)}{\shs{A}(U)}\subset\shs{A}(U)$ and the restriction of
the Lie bracket $\lie{\cdot}{\cdot}$ to $\shs{A}(U)$ is $\CC$-linear.
In particular, they proved that a holomorphic Lie algebroid $A$ is
integrable as a holomorphic Lie algebroid if and only if its
underlying real Lie algebroid $A_R$ is integrable as a real Lie
algebroid \cite{LaStXu09}.

In this paper, we introduce the notion of \Glanon groupoids, as a
unification of symplectic groupoids and holomorphic groupoids. By a
\Glanon groupoid, we mean a Lie groupoid equipped with a generalized
complex structure in the sense of Hitchin \cite{Hitchin03} which is
multiplicative.  Recall that a generalized complex structure on a
manifold $M$ is a smooth bundle map $\J: TM\oplus T^*M\to TM\oplus
T^*M$ such that $\J^2=-\Id$, $\J\J^*=\Id$, and the $+i$-eigenbundle of
$\J$ is involutive with respect to the Courant bracket, or the
Nijenhuis tensor of $\J$ vanishes. A generalized complex structure
$\J$ on a Lie groupoid $\Gamma\rr M$ is said to be multiplicative if
$\J$ is a Lie groupoid automorphism with respect to the Courant
groupoid $T\Gamma\oplus T^*\Gamma\rr TM\oplus A^*$ in the sense of
Mehta \cite{Mehta09}. When $\J$ is the generalized complex structure
associated to a symplectic structure on $\Gamma$, it is clear that
$\J$ is multiplicative if and only if the symplectic two-form is
multiplicative. Thus its \Glanon groupoid is equivalent to a
symplectic groupoid. On the other hand, when $\J$ is the generalized
complex structure associated to a complex structure on $\Gamma$, $\J$
is multiplicative if and only if the complex structure on $\Gamma$ is
multiplicative. Thus one recovers a holomorphic Lie groupoid.  On the
infinitesimal level, a \Glanon groupoid corresponds to a \Glanon Lie
algebroid, which is a Lie algebroid $A$ equipped with a compatible
generalized complex structure, i.e. a generalized complex structure
$\J_A: TA\oplus T^*A \to TA\oplus T^*A$ which is a Lie algebroid
automorphism with respect to the Lie algebroid $TA\oplus T^*A\to
TM\oplus A^*$. More precisely, we prove the following main result:

\vskip.2in

\noindent{\bf Theorem A.} {\it If $\Gamma$ is a $\s$-connected and
  $\s$-simply connected Lie groupoid with Lie algebroid $A$, then
  there is a one-one correspondence between \Glanon groupoid
  structures on $\Gamma$ and \Glanon Lie algebroid structures on $A$.}
\vskip.2in

As a consequence, we recover the following standard results
\cite{MaXu00, LaStXu09}: 

\noindent{\bf Theorem B.} {\it Let $(P, \pi )$ be a Poisson manifold.
  If $\Gamma$ is a $\s$-connected and $\s$-simply connected Lie
  groupoid integrating the Lie algebroid $(T^*P)_\pi$, then $\Gamma$
  automatically admits a symplectic groupoid structure.}
\vskip.2in

\noindent{\bf Theorem C.} {\it If $\Gamma$ is a $\s$-connected and
  $\s$-simply connected Lie groupoid integrating the underlying real
  Lie algebroid $A_R$ of a holomorphic Lie algebroid $A$, then
  $\Gamma$ is a holomorphic Lie groupoid.}
\vskip.2in

Another important class of \Glanon groupoids is when the generalized
complex structure $\J$ on $\Gamma$ has the form
\[\mathcal{J}= \begin{pmatrix}
                N & \pi^{\sharp} \\
                0 & - N^*  
              \end{pmatrix}.\] It is simple to see that in this case
              $(\Gamma\rr M, \J)$ being a \Glanon groupoid is
              equivalent to $\Gamma$ being a holomorphic Poisson
              groupoid. On the other hand, on the infinitesimal level,
              one proves that their corresponding \Glanon Lie
              algebroids are equivalent to holomorphic Lie
              bialgebroids. Thus as a consequence, we prove the
              following

              \noindent{\bf Theorem D.} {\it Given a holomorphic Lie
                bialgebroid $(A, A^*)$, if the underlying real Lie
                algebroid $A_R$ integrates to a $\s$-connected and
                $\s$-simply connected Lie groupoid $\Gamma$, then
                $\Gamma$ is a holomorphic Poisson groupoid.}
\vskip.2in

A special case of this theorem was proved in \cite{LaStXu09} using a
different method, namely, when the holomorphic Lie bialgebroid is $(
(T^* X)_\pi , TX)$, the one corresponding to a holomorphic Poisson
manifold $(X, \pi)$.  In this case, it was proved that when the
underlying real Lie algebroid of $(T^* X)_\pi$ integrates to a
$\s$-connected and $\s$-simply connected Lie groupoid $\Gamma$, then
$\Gamma$ is automatically a holomorphic symplectic groupoid.  However,
according to our knowledge, the integrating problem for general
holomorphic Lie bialgebroids remain open.  Indeed, solving this
integration problem is one of the main motivation behind our study of
\Glanon groupoids.

It is known that the base manifold of a generalized complex manifold
is automatically a Poisson manifold \cite{BaSt08, Gualtieri03}. Thus
it follows that a \Glanon groupoid is automatically a (real) Poisson
groupoid.  On the other hand, a \Glanon Lie algebroid $A$ admits a
Poisson structure, which can be shown a linear Poisson structure.
Therefore $A^*$ is a Lie algebroid.  We prove that $(A, A^*)$ is
indeed a Lie bialgebroid, which is the infinitesimal of the associated
Poisson groupoid of the \Glanon groupoid.

Note that in this paper we confine ourselves to the standard Courant
groupoid $T\Gamma \oplus T^*\Gamma$. We can also consider the twisted
Courant groupoid $(T\Gamma \oplus T^*\Gamma)_H$, where $H$ is a
multiplicative closed $3$-form. This will be discussed somewhere else.
Also note that a multiplicative generalized complex structure on the
groupoid $\Gamma$ induces a pair of maps $j_u:TM\oplus A^*\to TM\oplus
A^*$ and $j_A:A\oplus T^*M\to A\oplus T^*M$ on the units and on the
core, respectively. Since a multiplicative generalized complex structure 
is the same as two complex conjugated multiplicative Dirac structures 
on the groupoid, we get following the results in \cite{Jotz12c}, \cite{DrJoOr12}
that a multiplicative generalized complex structure 
is the same as two complex conjugated  Manin pairs over the set of units $M$
(in the sense of \cite{BuIgSe09}).
The detailed study of properties of the maps $j_A,j_u$, the Manin pairs, and the associated Dorfman connections
will be investigated in the spirit of \cite{LaStXu08} in a future project.

\medskip

{\bf Acknowledgments.}  We would like to thank Camille
Laurent-Gengoux, Rajan Mehta, and Cristian Ortiz for useful
discussions.  We also would like to thank several institutions for
their hospitality while work on this project was being done: Penn
State University (Jotz), and IHES and Beijing International Center for
Mathematical Research (Xu).  Special thanks go to the Michea family
and our many friends in Glanon, whose warmth provided us constant
inspiration for our work in the past.  Many of our friends from Glanon
have left mathematics, but our memories of them serve as a reminder
that a mathematician's primary role should be to elucidate and enjoy
the beauty of mathematics.  We dedicate this paper to them.

\subsection{Notation}
In the following, $\Gamma\rr M$ will always be a Lie groupoid with set
of arrows $\Gamma$, set of objects $M$, source and target
$\s,\tg:\Gamma\to M$ and object inclusion map $\epsilon:M\to \Gamma$.
The product of $g, h\in \Gamma$ with $\s(g)=\tg(h)$ will be written
$\m(g,h)=g\star h$ of simply $gh$.

The Lie functor that sends a Lie groupoid to its Lie algebroid and Lie
groupoid morphisms to Lie algebroid morphisms is $\mathsf A$. For
simplicity, we will write $\mathsf A(\Gamma)=A$.  The Lie algebroid
$q_A:A\to M$ is identified with $T^\s_M\Gamma$, the bracket
$[\cdot\,,\cdot]_A$ is defined with the right invariant vector fields
and the anchor $\rho_A=\rho$ is the restriction of $T\tg$ to
$A$. Hence, as a manifold, $A$ is embedded in $T\Gamma$. The inclusion
is $\iota:A\to T\Gamma$.  Note that the Lie algebroid of $\Gamma\rr
M$ can alternatively be defined using left-invariant sections. Given
$a\in\Gamma(A)$, the right-invariant section corresponding to $a$ will
simply be written $a^r$, i.e., $a^r(g)=TR_ga(\tg(g))$ for all $g \in
\Gamma$.  We will write $a^l$ for the left-invariant vector field
defined by $a$, i.e., $a^l(g)=-T(L_g\circ\mathsf i)(a(\s(g)))$ for all
$g\in \Gamma$.

The projection map of a vector bundle $A\to M$ will always be written
$q_A:A\to M$, unless specified otherwise.  For a smooth manifold $M$,
we fix once and for all the notation $p_M:=q_{TM}:TM\to M$ and
$c_M:=q_{T^*M}:T^*M\to M$.  We will write $\mathsf P_M$ for the direct
sum $TM\oplus T^*M$ of vector bundles over $M$, and $\pr_M$ for the
canonical projection $\mathsf P_M\to M$.

A bundle morphism $\mathsf P_M\to\mathsf P_M$, for a manifold $M$, 
will always be meant to be over the identity on $M$.

\section{Preliminaries}
\subsection{Dirac structures}
Let $A\to M$ be a vector bundle with dual bundle $A^*\to M$.  The
natural pairing $A\oplus A^*\to \R$, $(a_m,\xi_m)\mapsto \xi_m(a_m)$
will be written $\lbag\cdot\,,\cdot\rbag_A$ or
$\lbag\cdot\,,\cdot\rbag_{q_A}$
if the vector bundle structure needs to be specified.  The direct sum $A\oplus
A^*$ is endowed with a canonical fiberwise pairing $(\cdot\,,\cdot)_A$
given by
 \begin{equation}
   ((a_m,\xi_m), (b_m,\eta_m))_A=\lbag b_m, \xi_m\!\rbag_A+\lbag\! a_m, \eta_m\rbag_A
\label{sym_bracket}
\end{equation}
for all $m\in M$, $a_m,b_m\in A_m$ and $\xi_m,\eta_m\in A^*_m$.
   
In particular, the \emph{Pontryagin bundle} $\mathsf{P}_M:=TM\oplus
T^* M$ of a smooth manifold $M$ is endowed with the pairing
$(\cdot\,,\cdot)_{TM}$, which will be written as usual $\langle
\cdot\,,\cdot\rangle_M$.

The orthogonal space relative to the pairing $(\cdot\,,\cdot)_A$ of a
subbundle $\mathsf E\subseteq A\oplus A^*$ will be written $\mathsf
E^\perp$ in the following.  An \emph{almost Dirac structure} \cite{Courant90a} on $M $ is a Lagrangian vector subbundle
$\mathsf{D} \subset \mathsf{P}_M $. That is, $ \mathsf{D}$ coincides
with its orthogonal relative to \eqref{sym_bracket}, $\mathsf
D=\mathsf D^\perp$, so its fibers are necessarily $\dim M
$-dimensional.

The set of sections $\Gamma(\mathsf P_M)$ of the Pontryagin bundle of $M$
is endowed with the Courant bracket, given by
\begin{align}
  \llbracket X+\alpha, Y+\beta \rrbracket &=[X,Y]+\left(
    \ldr{X}\beta-\ldr{Y}\alpha+\frac{1}{2}
    \dr(\alpha(Y)-\beta(X))\right)\label{wrong_bracket}\\
  &= [X,Y]+ \left(\ldr{X}\beta-\ip{Y}\dr\alpha-\frac{1}{2} \dr\langle
    (X,\alpha), (Y,\beta)\rangle_M\right)\nonumber
\end{align}
for all $X+\alpha, Y+ \beta\in\Gamma(\mathsf P_M)$.

An almost Dirac structure $\mathsf D$ on a manifold $M$ is a Dirac
structure if its set of sections is closed under this bracket, i.e.,
$[ \Gamma(\mathsf{D}), \Gamma(\mathsf{D}) ] \subset \Gamma(\mathsf{D})
$.  Since $\left\langle X+ \alpha, Y+ \beta \right\rangle_M = 0$ if
$(X, \alpha), (Y, \beta) \in \Gamma(\mathsf{D})$, integrability of the
Dirac structure is expressed relative to a non-skew-symmetric bracket
that differs from \eqref{wrong_bracket} by eliminating in the second
line the third term of the second component. This truncated expression
is called the \emph{Dorfman bracket} in the literature:
\begin{equation}\label{Courant_bracket}
\llbracket X+\alpha, Y+\beta \rrbracket  
= [X, Y]+\left(\boldsymbol{\pounds}_{X} \beta - \ip{Y} \dr\alpha \right)
\end{equation} 
for all $X+\alpha, Y+\beta\in\Gamma(\mathsf D)$.
 
\subsection{Generalized complex structures}
\label{sec:gcs}

Let $V$ be a vector space. Consider a linear endomorphism $\mathcal J$
of $V\oplus V^*$ such that $\mathcal J^2=-\Id_V$ and $\mathcal J$ is
orthogonal with respect to the inner product
$$(X+\xi, Y+\eta)_V
=\xi(Y)+\eta(X), \quad \forall X,Y\in V, \; \xi, \eta \in V^* .$$ Such
a linear map is called a \emph{linear generalized complex structure}
by Hitchin \cite{Hitchin03}. The complexified vector
space $(V\oplus V^*)\otimes \mathbb{C}$ decomposes as the direct
sum $$(V\oplus V^* )\otimes \mathbb{C}=E_+\oplus E_-$$ of the
eigenbundles of $\mathcal J$ corresponding to the eigenvalues $\pm
\imaginary$ respectively, i.e.,
$$E_{\pm}=\left\{(X+\xi)\mp \imaginary \mathcal J(X+\xi) \mid X+\xi\in V\oplus V^* \right\}.$$
Both eigenspaces are maximal isotropic with respect to $\langle
\cdot\,,\cdot\rangle$ and they are complex conjugate to each other.

The following lemma is obvious.
\begin{lem}
  The linear generalized complex structures are in 1-1 correspondence
  with the splittings $(V\oplus V^*)\otimes \mathbb{C}=E_+\oplus E_-$
  with $E_{\pm}$ maximal isotropic and $E_-=\cc{E_+}$.
\end{lem}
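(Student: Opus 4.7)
The plan is to establish the two directions of the correspondence separately, both of which reduce to standard linear algebra arguments based on the interplay between the eigenspace decomposition of $\mathcal{J}$, its orthogonality with respect to the pairing, and the reality of $\mathcal{J}$ as a real operator on $V\oplus V^*$ extended complex-linearly to the complexification.

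For the forward direction, I would start with a linear generalized complex structure $\mathcal{J}$ on $V\oplus V^*$. Since $\mathcal{J}^2=-\Id$, the complexified operator has eigenvalues $\pm\imaginary$, giving the decomposition $(V\oplus V^*)\otimes\CC=E_+\oplus E_-$. To see that $E_+$ is isotropic, I would take $u,v\in E_+$ and compute, using orthogonality of $\mathcal{J}$,
\[ (u,v)_V=(\mathcal{J}u,\mathcal{J}v)_V=(\imaginary u,\imaginary v)_V=-(u,v)_V, \]
forcing $(u,v)_V=0$; the same argument works for $E_-$. Maximality follows from a dimension count: $\dim_\CC E_\pm=\dim V=\tfrac{1}{2}\dim_\CC((V\oplus V^*)\otimes\CC)$. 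Finally, since $\mathcal{J}$ is a real endomorphism, it commutes with complex conjugation, so $\mathcal{J}u=\imaginary u$ implies $\mathcal{J}\cc{u}=-\imaginary\cc{u}$, giving $\cc{E_+}\subseteq E_-$, and equality again by dimension.

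For the backward direction, given a splitting $(V\oplus V^*)\otimes\CC=E_+\oplus E_-$ satisfying the hypotheses, I would define $\mathcal{J}$ to be multiplication by $\imaginary$ on $E_+$ and by $-\imaginary$ on $E_-$. The condition $E_-=\cc{E_+}$ ensures that $\mathcal{J}$ commutes with complex conjugation and therefore restricts to a real endomorphism of $V\oplus V^*$. The identity $\mathcal{J}^2=-\Id$ is automatic from the construction. For orthogonality, I would verify $(\mathcal{J}u,\mathcal{J}v)_V=(u,v)_V$ by checking the three cases $u,v\in E_+$, $u,v\in E_-$, and $u\in E_+,\,v\in E_-$: in the first two cases both sides vanish by isotropy of $E_\pm$, and in the third case both factors contribute $\imaginary\cdot(-\imaginary)=1$, so the identity holds trivially.

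Finally, I would observe that the two constructions are inverse to each other: starting from $\mathcal{J}$, forming $E_\pm$, and then reconstructing the operator recovers $\mathcal{J}$ by the very definition of eigenspace; conversely, starting from the splitting, the $\pm\imaginary$-eigenspaces of the reconstructed $\mathcal{J}$ are precisely $E_\pm$. There is no real obstacle in this proof; the only subtle point is to make sure the reality of the reconstructed $\mathcal{J}$ is correctly deduced from the conjugation condition $E_-=\cc{E_+}$, which I would state explicitly.
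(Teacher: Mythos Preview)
Your proof is correct. The paper does not actually prove this lemma: it simply declares it ``obvious'' after noting in the preceding paragraph that the eigenspaces $E_\pm$ of a linear generalized complex structure are maximal isotropic and complex conjugate to each other. Your argument fills in exactly the details the paper omits, along the same lines, so there is nothing to compare.
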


Now, let $M$ be a manifold and $\J$ a bundle endomorphism of $\mathsf
P_ M=TM\oplus T^*M$ such that $\J^2=-\Id_{\mathsf P_M}$, and $\J$ is
orthogonal with respect to $\langle \cdot\,,\cdot\rangle_M$.  Then
$\J$ is a \emph{generalized almost complex structure}.  In the
associated eigenbundle decomposition $$T_\mathbb{C} M\oplus
T_\mathbb{C}^*M =E_+\oplus E_- ,$$ if $\Gamma(E_+)$ is closed under
the (complexified) Courant bracket, then $E_+$ is a (complex) Dirac
structure on $M$ and one says that $\J$ is a \emph{generalized complex
  structure} \cite{Hitchin03, Gualtieri03}.  In this case, $E_-$ must
also be a Dirac structure since $E_-=\cc{E_+}$. Indeed $(E_+, E_-)$ is
a complex Lie bialgebroid in the sense of Mackenzie-Xu \cite{MaXu94},
in which $E_+$ and $E_-$ are complex conjugate to each other.

\begin{defn}
  Let $\J: \mathsf P_ M\to \mathsf P_ M$ be a vector bundle
  morphism.  Then the \emph{generalized Nijenhuis tensor}
  associated to $\J$ is the map
$$\mathcal N_{\J}:\mathsf P_ M\times_M \mathsf P_ M\to \mathsf P_ M$$
defined by
$$\mathcal N_{\J}(\xi, \eta)
=\llbracket\J\xi, \J\eta\rrbracket+\J^2\llbracket\xi, \eta\rrbracket-\J\left(\llbracket\J\xi,
  \eta\rrbracket+\llbracket\xi,\J\eta\rrbracket\right)$$ for all $\xi,\eta\in\Gamma(\mathsf
P_M)$, where the bracket is the Courant-Dorfman bracket.
\end{defn}
The following proposition gives two equivalent definitions of a
generalized complex structure.

\begin{prop}
  A generalized complex structure is equivalent to any of the
  following:
\begin{enumerate}
\item A bundle endomorphism $\J$ of $\mathsf P_ M$ such that $\J$ is
  orthogonal with respect to $\langle \cdot\,,\cdot\rangle_M$,
  $\J^2=-\Id$ and $\mathcal N_\J=0$.
\item A complex Lie bialgebroid $(E_+, E_-)$ whose double is the
  standard Courant algebroid $\TC M\oplus \TC^*M$, and $E_+$ and $E_-$
  are complex conjugate to each other.
\end{enumerate}
\end{prop}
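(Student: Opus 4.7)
The proof splits naturally into two equivalences: the original characterization (involutivity of $E_+$ under the complexified Courant bracket) with condition (1), and with condition (2). Neither equivalence is deep; the bulk of the work lies in a tensoriality verification.

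For the equivalence with (1), the plan is to first establish that the generalized Nijenhuis tensor $\mathcal N_\J$ is genuinely a $C^\infty(M)$-bilinear tensor on $\mathsf P_M$. This is the one nontrivial step: the Dorfman bracket is not tensorial, but the Leibniz-defect terms arising from $\llbracket\J\xi, \J\eta\rrbracket$, $\llbracket\J\xi, \eta\rrbracket$, and $\llbracket\xi,\J\eta\rrbracket$, together with the $\J^2$ correction, can be shown to cancel precisely, once one uses the identities $\J^2=-\Id$ and $\J^*=-\J$ (the latter being equivalent to orthogonality with respect to $\langle\cdot\,,\cdot\rangle_M$). Granted this, I would extend $\mathcal N_\J$ $\CC$-linearly to $\TC M\oplus\TC^* M$ and evaluate it on the decomposition $E_+\oplus E_-$. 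For $\xi,\eta\in\Gamma(E_+)$ (on which $\J$ acts as $i\cdot\Id$), direct substitution yields
\[\mathcal N_\J(\xi,\eta)= -2\llbracket\xi,\eta\rrbracket-2i\,\J\llbracket\xi,\eta\rrbracket,\]
which vanishes precisely when $\J\llbracket\xi,\eta\rrbracket=i\llbracket\xi,\eta\rrbracket$, i.e.\ when $\llbracket\xi,\eta\rrbracket\in\Gamma(E_+)$. By complex conjugation the same analysis covers $\Gamma(E_-)\times\Gamma(E_-)$; and on $\Gamma(E_+)\times\Gamma(E_-)$ the same substitution immediately produces zero. Tensoriality then packages everything into the equivalence $\mathcal N_\J\equiv 0 \Longleftrightarrow E_+\text{ involutive}$.

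For the equivalence with (2), in one direction: assuming the original definition, $E_+$ is a complex Dirac subbundle of $\TC M\oplus\TC^*M$ and its complex conjugate $E_-$ is then automatically Dirac as well; together they form a matched pair of transverse maximal isotropic Dirac subbundles. The standard Drinfel'd-double construction identifies $(E_+,E_-)$ with a complex Lie bialgebroid whose double is precisely the standard complex Courant algebroid $\TC M\oplus\TC^* M$. Conversely, given such a bialgebroid with $E_-=\cc{E_+}$, I would define $\J$ on $\TC M\oplus\TC^* M$ by letting it act as $i\cdot\Id$ on $E_+$ and as $-i\cdot\Id$ on $E_-$. The condition $E_-=\cc{E_+}$ ensures $\J$ descends to an $\R$-linear endomorphism of $\mathsf P_M$; $\J^2=-\Id$ is immediate; orthogonality with respect to $\langle\cdot\,,\cdot\rangle_M$ follows from the isotropy of $E_\pm$ and the fact that the Courant pairing restricts to the natural duality pairing between $E_+$ and $E_-$; and integrability of the $+i$-eigenbundle $E_+$ is precisely the Dirac condition, recovering the original definition.

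The main obstacle is the tensoriality of $\mathcal N_\J$, where the several Leibniz-type defect terms of the Dorfman bracket, a pairing-valued correction, and derivations along both $\eta$ and $\J\eta$ must be tracked and shown to cancel; this is the only place where $\J^2=-\Id$ and $\J^*=-\J$ are used in an essential, combined way. Once this verification is in hand, the rest of the argument is elementary linear-algebraic bookkeeping on the eigenbundle decomposition.
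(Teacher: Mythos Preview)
The paper does not actually supply a proof of this proposition: it is stated as a known result, with the surrounding discussion citing \cite{Hitchin03, Gualtieri03, MaXu94} for the relevant background, and the text moves directly on to the matrix description \eqref{J}. So there is no ``paper's own proof'' to compare against.

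That said, your outline is correct and is precisely the standard argument one finds in the foundational references. The eigenbundle computation $\mathcal N_\J(\xi,\eta)=-2\llbracket\xi,\eta\rrbracket-2i\,\J\llbracket\xi,\eta\rrbracket$ for $\xi,\eta\in\Gamma(E_+)$ is the right identity, and the vanishing on $E_+\times E_-$ is automatic as you say; tensoriality then closes the loop for part (1). For part (2), the identification of a pair of transverse complex-conjugate Dirac structures in $\TC M\oplus\TC^*M$ with a complex Lie bialgebroid whose double is the standard Courant algebroid is exactly the Manin-triple picture of Liu--Weinstein--Xu specialized to this setting, and your description of both directions is accurate. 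The only caveat is that the tensoriality check for $\mathcal N_\J$, while routine, does require tracking several terms; if you intend this as a complete proof rather than a sketch you should write it out, since it is the one place a reader might stumble.
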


For a two-form $\omega$ on $M$ we denote by $\omega^\flat: TM\to T^*M$
the bundle map $X\mapsto i_{X}\omega$, while for a bivector $\pi$ on
$M$ we denote by $\pi^{\sharp}: T^*M\to TM$ the contraction with
$\pi$.  Also, we denote by $[\cdot, \cdot]_{\pi}$ the bracket defined
on the space of 1-forms on $M$ by
\begin{equation}
  \label{br-pi} 
  [\xi, \eta]_{\pi}= L_{\pi^{\sharp}\xi}\eta- L_{\pi^{\sharp}\eta}\xi - d\pi(\xi, \eta) 
\end{equation}
for all $\xi,\eta\in\Omega^1(M)$.

A generalized complex structure $\mathcal{J}:\mathsf P_M\to \mathsf
P_M$ can be written
\begin{equation}
\label{J} 
\mathcal{J}= \left( \begin{array}{ll}
    N & \pi^{\sharp} \\
    \omega^\flat & - N^*  
        \end{array}
   \right) 
\end{equation}
where $\pi$ is a bivector field on $M$, $\omega$ is a two-form on $M$,
and $N: TM\to TM$ is a bundle map, satisfying together a list of
identities (see \cite{Crainic04}).  In particular $\pi$ is a Poisson
bivector field.

Let $\mathcal I:\mathsf P_M\to \mathsf P_M$ be the endomorphism
$$\mathcal I=\begin{pmatrix}
  \Id_{TM}&0\\
  0&-\Id_{T^*M}\end{pmatrix}.
$$
Then we have 
\begin{align*}
  \langle \mathcal I(\cdot)\,, \mathcal I(\cdot)\rangle_M
  &=-\langle \cdot\,, \cdot\rangle_M,\\
  \llbracket\mathcal I(\cdot)\,, \mathcal I(\cdot)\rrbracket
  &=\mathcal I\llbracket \cdot\,, \cdot\rrbracket
\end{align*}
and the following proposition follows.

\begin{prop}
  If $\J$ is an almost generalized complex structure on $M$, then
 \begin{equation}\label{def_of_bar_J}
\bar\J:=\I\circ \J\circ \I
\end{equation}
is an almost generalized complex structure. Furthermore,
$$\mathcal N_{\bar\J}=\I\circ \mathcal N_\J\circ(\mathcal I,\mathcal I).$$
Hence, $\bar\J$ is a generalized complex structure if and only if $\J$
is a generalized complex structure.
\end{prop}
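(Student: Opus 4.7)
The plan is to verify the three conditions defining an almost generalized complex structure for $\bar\J=\I\circ\J\circ\I$, namely $\bar\J^2=-\Id$, orthogonality with respect to $\langle\cdot,\cdot\rangle_M$, and (for the second part) the transformation rule for the Nijenhuis tensor. All three will follow from direct algebraic manipulation using the single observation that $\I^2=\Id_{\mathsf P_M}$ together with the two identities about $\I$ stated just before the proposition.

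First I would compute $\bar\J^2=\I\J\I\I\J\I=\I\J^2\I=-\I^2=-\Id$ using $\I^2=\Id$ and $\J^2=-\Id$. For orthogonality, I would use the first identity twice:
\begin{equation*}
\langle\bar\J\xi,\bar\J\eta\rangle_M=\langle\I\J\I\xi,\I\J\I\eta\rangle_M=-\langle\J\I\xi,\J\I\eta\rangle_M=-\langle\I\xi,\I\eta\rangle_M=\langle\xi,\eta\rangle_M,
\end{equation*}
where the second-to-last step uses that $\J$ is orthogonal. This immediately shows $\bar\J$ is an almost generalized complex structure.

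Next I would establish the Nijenhuis formula. Expanding $\mathcal N_{\bar\J}(\xi,\eta)$ gives four terms, each of which contains brackets of the form $\llbracket\I(\cdot),\I(\cdot)\rrbracket$ once one inserts $\I^2=\Id$ in the right places (e.g.\ rewriting $\llbracket\I\J\I\xi,\eta\rrbracket=\llbracket\I\J\I\xi,\I\I\eta\rrbracket=\I\llbracket\J\I\xi,\I\eta\rrbracket$ using the second identity). Factoring out an overall $\I$ on the left and comparing term by term with
\begin{equation*}
\mathcal N_\J(\I\xi,\I\eta)=\llbracket\J\I\xi,\J\I\eta\rrbracket-\llbracket\I\xi,\I\eta\rrbracket-\J\bigl(\llbracket\J\I\xi,\I\eta\rrbracket+\llbracket\I\xi,\J\I\eta\rrbracket\bigr)
\end{equation*}
yields $\mathcal N_{\bar\J}(\xi,\eta)=\I\,\mathcal N_\J(\I\xi,\I\eta)$, which is the desired identity. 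Since $\I$ is a vector bundle isomorphism, this equality forces $\mathcal N_{\bar\J}=0$ if and only if $\mathcal N_\J=0$, and by the preceding proposition the integrability conclusion follows.

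There is no real obstacle here: the whole proof is an exercise in book-keeping using $\I^2=\Id$ and the compatibility of $\I$ with the pairing and Courant--Dorfman bracket. The one step that needs mild care is inserting the identities $\xi=\I\I\xi$ and $\eta=\I\I\eta$ inside mixed brackets so as to be able to apply the second identity; once this is done, the matching with $\mathcal N_\J(\I\xi,\I\eta)$ is immediate.
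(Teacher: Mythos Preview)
Your proposal is correct and is precisely the argument the paper has in mind: the paper does not spell out a proof but simply states that the proposition ``follows'' from the two identities $\langle\I(\cdot),\I(\cdot)\rangle_M=-\langle\cdot,\cdot\rangle_M$ and $\llbracket\I(\cdot),\I(\cdot)\rrbracket=\I\llbracket\cdot,\cdot\rrbracket$, and your computation is exactly the direct verification using these together with $\I^2=\Id$.
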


The following are two standard examples \cite{Hitchin03}.

\begin{examples}
\begin{enumerate}
\item Let $J$ be an almost complex structure on $M$.
  Then $$\J=\begin{pmatrix} J & 0 \\ 0 & -J^* \end{pmatrix}$$ is
  $\langle\cdot\,,\cdot\rangle_M$-orthogonal and satisfies
  $\J^2=-\Id$.  $\J$ is a generalized complex structure if and only if
  $J$ is integrable.
\item Let $\omega$ be a nondegenerate 2-form on $M$.
  Then $$\J=\begin{pmatrix} 0 & -\left(\omega^{\bemol}\right)^{-1} \\
    \omega^{\bemol} & 0 \end{pmatrix} $$ is a generalized complex
  structure if and only if $d\omega=0$, i.e., $\omega$ is a symplectic
  $2$-form.
\end{enumerate}
\end{examples}

\subsection{Pontryagin bundle over a Lie groupoid}
\paragraph{\textbf{The tangent prolongation of a Lie groupoid}}

Let $\Gamma\rr M$ be a Lie groupoid. Applying the tangent functor to
each of the maps defining $\Gamma$ yields a Lie groupoid structure on
$T\Gamma$ with base $TM$, source $T\s$, target $T\tg$ and
multiplication $T\m:T(\Gamma\times _M\Gamma)\to T\Gamma$.  The
identity at $v_p\in T_pM$ is $1_{v_p}=T_p\epsilon v_p$.  This defines
the \emph{tangent prolongation $T\Gamma\rr TM$ of $\Gamma\rr M$} or
the \emph{tangent groupoid associated to $\Gamma\rr M$}.

\paragraph{\textbf{The cotangent Lie groupoid defined by a Lie groupoid}}
If $\Gamma\rr M$ is a Lie groupoid with Lie algebroid $A\Gamma\to M$,
then there is also an induced Lie groupoid structure on
$T^*\Gamma\rr\, A^*\Gamma= (TM)^\circ$. The source map
$\hat\s:T^*\Gamma\to A^*$ is given by
\[\hat\s(\alpha_g)\in A_{\s(g)}^* \text{ for } \alpha_g\in T_g^*\Gamma,\qquad 
\hat\s(\alpha_g)(a(\s(g)))=\alpha_g(a^l(g))\] for all $a\in
\Gamma(A)$, and the target map $\hat\tg:T^*\Gamma\to A^*$ is given
by \[\hat\tg(\alpha_g)\in A_{\tg(g)}^*, \qquad
\hat\tg(\alpha_g)(a(\tg(g))) =\alpha_g(a^r(g))\] for all $a\in
\Gamma(A)$.  If $\hat\s(\alpha_g)=\hat\tg(\alpha_h)$, then the product
$\alpha_g\star\alpha_h$ is defined by
\[(\alpha_g\star\alpha_h)(v_g\star v_h)=\alpha_g(v_g)+\alpha_h(v_h)
\]
for all composable pairs $(v_g,v_h)\in T_{(g,h)}(\Gamma\times_M
\Gamma)$.

This Lie groupoid structure was introduced in \cite{CoDaWe87} and is
explained for instance in \cite{CoDaWe87,Pradines88,
  Mackenzie05}. Note that the original definition was the following:
let $\Lambda_\Gamma$ be the graph of the partial multiplication
$\mathsf m$ in $\Gamma$, i.e.,
$$\Lambda_\Gamma=\{(g,h,g\star h)\mid g,h\in \Gamma, \s(g)=\tg(h)\}.
$$
The isomorphism $\psi: (T^*\Gamma)^3\to (T^*\Gamma)^3$,
$\psi(\alpha,\beta,\gamma)= (\alpha,\beta,-\gamma)$ sends the conormal
space $(T\Lambda_G)^\circ\subseteq (T^*\Gamma)^3\an{\Lambda_\Gamma}$
to a submanifold $\Lambda_*$ of $(T^*\Gamma)^3$.  It is shown in
\cite{CoDaWe87} that $\Lambda_*$ is the graph of a groupoid
multiplication on $T^*\Gamma$, which is exactly the multiplication
defined above.

\paragraph{\textbf{The ``Pontryagin groupoid'' of a Lie groupoid}}
If $\Gamma\rr M$ is a Lie groupoid with Lie algebroid $A\to M$,
according to \cite{Mehta09}, there is hence an induced VB-Lie groupoid
structure on $\mathsf P_ \Gamma=T\Gamma\oplus T^*\Gamma$ over
$TM\oplus A^*$, namely, the product groupoid, where $T\Gamma\oplus
T^*\Gamma$ and $TM\oplus A^*$ are identified with the fiber products
$T\Gamma \times_\Gamma T^*\Gamma$ and $TM\times_M A^*$, respectively.
It is called a Courant groupoid by Mehta \cite{Mehta09}.

\begin{prop}
Let $\Gamma\rr M$ be a Lie groupoid with Lie algebroid $A\to M$.  Then
the Poytryagin bundle $\mathsf P_ \Gamma=T\Gamma\oplus T^*\Gamma$ is a
Lie groupoid over $TM\oplus A^*$, and the canonical projection
$\mathsf P_ \Gamma\to \Gamma$ is a Lie groupoid morphism.
\end{prop}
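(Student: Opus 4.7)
My plan is to obtain $\mathsf P_\Gamma\rr TM\oplus A^*$ as the fibered product, over the base groupoid $\Gamma\rr M$, of the two Lie groupoids already recalled in the preceding paragraphs: the tangent prolongation $T\Gamma\rr TM$ and the cotangent groupoid $T^*\Gamma\rr A^*$. Both groupoids sit above $\Gamma\rr M$ via their canonical bundle projections, and in each case these projections commute with source, target, identity, inverse and multiplication, i.e., $T\Gamma$ and $T^*\Gamma$ are VB-groupoids over $\Gamma\rr M$. Hence the fibered product of Lie groupoids over $\Gamma\rr M$ exists, and its arrow space is $T\Gamma\times_\Gamma T^*\Gamma=T\Gamma\oplus T^*\Gamma=\mathsf P_\Gamma$ while its object space is $TM\times_M A^*=TM\oplus A^*$.

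Concretely, I would define the source of $(v_g,\alpha_g)\in T_g\Gamma\oplus T_g^*\Gamma$ to be $(T_g\s(v_g),\hat\s(\alpha_g))\in T_{\s(g)}M\oplus A^*_{\s(g)}$, and analogously the target using $T\tg$ and $\hat\tg$; the unit at $(v_m,\xi_m)\in T_mM\oplus A^*_m$ is $(T_m\epsilon(v_m),1_{\xi_m})$, where $1_{\xi_m}$ denotes the unit of $\xi_m$ in $T^*\Gamma\rr A^*$; and for composable pairs the multiplication is performed componentwise, $(v_g,\alpha_g)\star(v_h,\alpha_h)=(v_g\star v_h,\,\alpha_g\star\alpha_h)$. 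Smoothness, associativity, the unit axiom and the inverse axiom are inherited termwise from the two factor groupoids, and the compatibility of composability on the two summands is automatic because $v_g\star v_h$ is defined precisely when $T\s(v_g)=T\tg(v_h)$, i.e., precisely when $\s(g)=\tg(h)$, which is also the condition under which $\alpha_g\star\alpha_h$ is defined.

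The second assertion is then immediate: by construction the canonical projection $\pr_\Gamma\colon\mathsf P_\Gamma\to\Gamma$ is the tautological projection out of a fibered product, so it intertwines source, target, identity and multiplication precisely because the structure maps on $\mathsf P_\Gamma$ were defined componentwise from those on $T\Gamma$ and $T^*\Gamma$, both of which project equivariantly to $\Gamma\rr M$. There is no real obstacle in this proof; the only points requiring care are making the identification of $T\Gamma\times_\Gamma T^*\Gamma$ with the Whitney sum $T\Gamma\oplus T^*\Gamma$ explicit, and recalling that $\hat\s$ and $\hat\tg$ actually land in the sub-bundle $A^*\hookrightarrow T^*\Gamma\an M$ so that the object manifold is genuinely $TM\oplus A^*$ rather than something larger. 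Both facts are standard and are recorded in the references cited before the statement.
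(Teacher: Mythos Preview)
Your approach is exactly what the paper sketches: the proposition is stated without a formal proof, and the paragraph preceding it simply declares $\mathsf P_\Gamma$ to be ``the product groupoid'' obtained by identifying $T\Gamma\oplus T^*\Gamma$ and $TM\oplus A^*$ with the fiber products $T\Gamma\times_\Gamma T^*\Gamma$ and $TM\times_M A^*$. Your write-up supplies the expected details.

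One sentence, however, is incorrect and should be deleted. You assert that $v_g\star v_h$ is defined ``precisely when $T\s(v_g)=T\tg(v_h)$, i.e., precisely when $\s(g)=\tg(h)$, which is also the condition under which $\alpha_g\star\alpha_h$ is defined.'' Neither equivalence holds: $T\s(v_g)=T\tg(v_h)$ certainly forces $\s(g)=\tg(h)$, but not conversely (the tangent vectors in $TM$ must also agree), and similarly $\hat\s(\alpha_g)=\hat\tg(\alpha_h)$ is strictly stronger than $\s(g)=\tg(h)$. Composability in the fibered-product groupoid is simply the \emph{conjunction} of the two independent conditions $T\s(v_g)=T\tg(v_h)$ and $\hat\s(\alpha_g)=\hat\tg(\alpha_h)$; there is no ``compatibility'' to check, and the construction goes through without this remark.
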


We will write $\T\tg$ for the target map
\[\begin{array}{cccc}
\T\tg:&\mathsf P_ \Gamma&\to& TM\oplus A^*\\
&(v_g,\alpha_g)&\mapsto&\left(T\tg(v_g),\hat\tg(\alpha_g)\right)
\end{array},
\]
$\T\s$ for the source map
\[
\T\s:\mathsf P_ \Gamma\to TM\oplus A^*
\]
and $\T\epsilon$, $\T\iota$, $\T\mathsf \m$ for the embedding of the
units, the inversion map and the multiplication of this Lie groupoid,
i.e.
for instance 
\[\T\s(e_g)=(s_*,\hat s)(e_g)\in T_{\s(g)}M\times A^*_{\s(g)}\]
for all $e_g\in\mathsf P_\Gamma(g)$. 

\subsection{Pontryagin bundle over a  Lie algebroid}
 
Given any vector bundle $q_A\colon A\lon M$, the map $Tq_A\colon
TA\lon TM$ has a vector bundle structure obtained by applying the
tangent functor to the operations in $A\lon M$. The operations in
$TA\lon TM$ are consequently vector bundle morphisms with respect to
the tangent bundle structures in $TA\lon A$ and $TM\lon M$ and $TA$
with these two structures is therefore a double vector bundle which we
call the {\em tangent double vector bundle of} $A\lon M$ (see
\cite{Mackenzie92} and references given there).

If $(q_A:A\to M, [\cdot\,,\cdot]_A, \rho_A)$ is a Lie algebroid, then
there is a Lie algebroid structure on $Tq_A\colon TA\lon TM$ defined
in \cite{MaXu94} with respect to which $p_A\colon TA\lon A$ is a Lie
algebroid morphism over $p_M\colon TM\lon M$; we now call this the
{\em tangent prolongation} of $A\lon M$.

For a general vector bundle $q\colon A\lon M$, there is also a double
vector bundle
\begin{equation}\label{diag:T*A}
\begin{xy}
\xymatrix{ T^*A\ar[d]_{c_A}\ar[r]^{r_A}&A^*\ar[d]^{q^*}\\
A\ar[r]_{q_A}&M}
\end{xy}.
\end{equation}
Here the map $r_A$ is the composition of the Legendre transformation
$T^*A\to T^*A^*$ with the projection $T^*A^*\to A^*$ \cite{MaXu94}.

Elements of $T^*A$ can be represented locally as $(\omega,a,\phi)$
where $\omega\in T^*_mM,\ a\in A_m,\ \phi\in A^*_m$ for some $m\in M$.
In these terms the Legendre transformation
\[R:T^*A\to T^*A^*\] can be defined by $R(\omega,\phi,a) =
(-\omega,a,\phi)$; for an intrinsic definition see \cite{MaXu94}. This
$R$ is an isomorphism of double vector bundles preserving the side
bundles; that is to say, it is a vector bundle morphism over both $A$
and $A^*$.  Since $A$ is a Lie algebroid, its dual $A^*$ has a linear
Poisson structure, and the cotangent space $T^*A^*$ has a Lie
algebroid structure over $A^*$.  Hence, there is a unique Lie
algebroid structure on $r_A:T^*A\to A^*$ with respect to which the
projection $T^*A\to A$ is a Lie algebroid morphism over $A^*\to M$.
As a consequence, one can form the product Lie algebroid $TA\times_A
T^*A\to TM\times_M A^*$. Thus we have the following

\begin{prop}
  Let $A\to M$ be a Lie algebroid. Then the Pontryagin bundle $P_A$ is
  naturally a Lie algebroid.  Moreover, the canonical projection
  $P_A\to A$ is a Lie algebroid morphism.
\end{prop}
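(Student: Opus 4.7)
The plan is to realize the Lie algebroid structure on $\mathsf P_A = TA \oplus T^*A$ as a fiber-product (a.k.a.\ ``product'') Lie algebroid built directly from the two Lie algebroid structures recalled in the text immediately preceding the statement.

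First, I would identify $\mathsf P_A = TA \oplus T^*A$ with the fiber product $TA \times_A T^*A$ over $A$, via the two vector bundle projections $p_A \colon TA \to A$ and $c_A \colon T^*A \to A$, and similarly identify the intended base $TM \oplus A^*$ with the fiber product $TM \times_M A^*$ via $p_M$ and $q^*$. This is a routine identification of Whitney sums with fiber products of vector bundles.

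Second, I would invoke the general principle that the fiber product of two Lie algebroids over a common Lie algebroid is again a Lie algebroid. Concretely, $Tq_A \colon TA \to TM$ and $r_A \colon T^*A \to A^*$ are both Lie algebroids (the first by the Mackenzie--Xu tangent prolongation, the second by transporting the cotangent Lie algebroid structure of $T^*A^* \to A^*$ through the Legendre isomorphism $R$), and by hypothesis the bundle projections $p_A$ and $c_A$ are Lie algebroid morphisms to $A$, covering $p_M \colon TM \to M$ and $q^* \colon A^* \to M$ respectively. Hence $TA \times_A T^*A$ inherits a canonical Lie algebroid structure over $TM \times_M A^*$, with anchor obtained componentwise from the anchors of $TA \to TM$ and $T^*A \to A^*$, and with bracket determined by the componentwise brackets on a generating family of sections. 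The canonical projection $\mathsf P_A \to A$ is then precisely the pair $(p_A, c_A)$, and is a Lie algebroid morphism over $\pr_M \colon TM \oplus A^* \to M$ because both of its components are.

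The main technical subtlety is the precise description of the bracket on sections of $\mathsf P_A \to TM \oplus A^*$, since sections of the fiber product do not naively decompose as pairs of sections of the two summands (the bases $TM$ and $A^*$ differ, and sections of the Whitney sum are defined over $A$). The standard way to handle this is to fix a spanning family of sections in each summand---typically linear sections (arising from sections of $A$ on the $TA$ side, and from $1$-forms on $M$ on the $T^*A$ side) together with core sections---declare the bracket on such generators to be the componentwise bracket supplied by the two given Lie algebroid structures, and extend by $C^\infty(TM \oplus A^*)$-linearity using the Leibniz rule. The Jacobi identity and the anchor-bracket compatibility then reduce componentwise to the corresponding identities already known in $TA$ and in $T^*A$, which completes the argument.
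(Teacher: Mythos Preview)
Your proposal is correct and follows exactly the same approach as the paper, which simply observes (in the paragraph immediately preceding the proposition) that since $p_A\colon TA\to A$ and $c_A\colon T^*A\to A$ are both Lie algebroid morphisms over $p_M$ and $q^*$ respectively, one may form the product Lie algebroid $TA\times_A T^*A \to TM\times_M A^*$. The paper treats the proposition as an immediate consequence of this and gives no further detail; your elaboration on how the bracket is actually specified on a spanning family of sections is a welcome addition but not a different route.
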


The double vector bundle $(P_A, TM\oplus A^*, A, M)$ is a VB-Lie
algebroid in the sense of Gracia-Saz and Mehta \cite{GrMe10a}.

\subsection{Canonical identifications}\label{canonical_id_subsec}
The canonical pairing $\lbag\cdot\,,\cdot\rbag_A:A\oplus A^*\to \R$
induces a nondegenerate pairing $\langle\langle \cdot\,,\cdot
\rangle\rangle_A=\pr_2\circ T\lbag\cdot\,,\cdot\rbag_A$ on
$TA\times_{TM}TA^*$, where $\pr_2:T\R=\R\times\R\to\R$ (see
\cite{Mackenzie05}):
\begin{displaymath}
\begin{xy}
  \xymatrix{ A\oplus A^*\ar[d]_{\lbag\cdot\,,\cdot\rbag_A}&&TA
    \times_{TM}TA^*
    \ar[rrd]^{\quad \langle\langle \cdot\,,\cdot \rangle\rangle_A}\ar[d]_{T\lbag\cdot\,,\cdot\rbag_A}&&\\
    \R&&T\R\ar[rr]_{\pr_2}&&\R }
\end{xy}
\end{displaymath}
That is, if $\chi=\left.\frac{d}{dt}\right\an{t=0}\varphi(t)\in TA^*$
and $\xi=\left.\frac{d}{dt}\right\an{t=0}a(t)\in TA$ are such that
$Tq_A(\xi)=Tc_A(\chi)$, then $\langle\langle
\chi,\xi\rangle\rangle_A=\left.\frac{d}{dt}\right\an{t=0}\lbag
a(t),\varphi(t)\rbag_A$.  For instance, if $X\in\Gamma(A)$ and
$\xi\in\Gamma(A^*)$, then $TX\in\Gamma_{TM}(TA)$ and
$T\xi\in\Gamma_{TM}(TA^*)$ are such that
$Tq_A(TX)=\Id_{TM}=Tc_A(T\xi)$ and we have for all $v_p=\dot c(0)\in
T_pM$:
\begin{equation}\label{eq1}
  \langle\langle TX(v_p), T\xi(v_p)\rangle\rangle _A
  =\left.\frac{d}{dt}\right\an{t=0}\lbag X, \xi\rbag_A(c(t))=\pr_2(T_p(\xi(X))(v_p)).
\end{equation}

If $(Tq_A)^\vee:(TA)^\vee\to TM$ is the vector bundle that is dual to
the vector bundle $Tq_A:TA\to TM$, there is an induced isomorphism $I$
\begin{displaymath}
\begin{xy}
\xymatrix{
TA^*\ar[d]_{Tc_A}\ar[r]^{I} &(TA)^\vee\ar[d]^{(Tq_A)^\vee}\\
TM\ar[r]&TM}
\end{xy}
\end{displaymath} that is
defined by 
\begin{equation}\label{def_of_I}
\lbag\! \xi, I(\chi)\rbag_{TA\to TM}= \langle\langle \xi, \chi\rangle\rangle_A
\end{equation}
for all $\chi\in TA^*$ and $\xi\in TA$ such that
$Tc_A(\chi)=Tq_A(\xi)$. That is, the following diagram commutes:

\begin{displaymath}
\begin{xy}
  \xymatrix{ TA\times_{TM}TA^*\ar[rr]^{\qquad
      T\lbag\cdot\,,\cdot\rbag_A}
    \ar[d]_{(\Id,I)}\ar[drr]^{\langle\langle
      \cdot\,,\cdot\rangle\rangle_A}
    &&T\R\ar[d]^{\pr_2}\\
    TA\times_{TM}(TA)^\vee\ar[rr]_{\qquad
      \lbag\cdot\,,\cdot\rbag_{TA}}&&\R }
\end{xy}
\end{displaymath}
\bigskip

Applying this to the case $q_A=p_{TM}$, we get an isomorphism 
\begin{displaymath}
\begin{xy}
\xymatrix{
T(T^*M)\ar[d]_{Tc_M}\ar[r]^{I} &(TTM)^\vee\ar[d]^{(Tp_M)^\vee}\\
TM\ar[r]&TM}
\end{xy}
\end{displaymath}

We have also the canonical involution
\begin{displaymath}
\begin{xy}
\xymatrix{
TTM\ar[d]_{Tp_M}\ar[r]^{\sigma} &TTM\ar[d]^{p_{TM}}\\
TM\ar[r]&TM}
\end{xy}
\end{displaymath} 

Recall that for $V\in\mx(M)$ the map $TV:TM\to TTM$ is a section of
$Tp_M:TTM\to TM$ and $\sigma(TV)$ is a section of $p_{TM}:TTM\to TM$,
i.e. a vector field on $TM$.

We get  an isomorphism $\varsigma:=\sigma^*\circ I: T(T^*M)\to T^*(TM)$ 
\begin{displaymath}
\begin{xy}
\xymatrix{
T(T^*M)\ar[d]_{Tc_M}\ar[r]^{\varsigma} &T^*(TM)\ar[d]^{c_{TM}}\\
TM\ar[r]&TM}
\end{xy}
\end{displaymath}

\begin{prop}
  The map $\Sigma:=(\sigma,\varsigma): T\mathsf P_M\to \mathsf P_{TM}$
\begin{equation}\label{def_of_Sigma}
\begin{xy}
  \xymatrix{
    T\mathsf P_M\ar[d]_{T\pr_M}\ar[r]^{\Sigma} &\mathsf P_{TM}\ar[d]^{\pr_{TM}}\\
    TM\ar[r]&TM}
\end{xy},
\end{equation}
where $\pr_{M}:\mathsf P_M\to M$ is the projection, establishes an isomorphism
of vector bundles.
\end{prop}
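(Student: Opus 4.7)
The plan is to exhibit $\Sigma=(\sigma,\varsigma)$ as a fiberwise-linear bijection by unpacking both sides as fiber products over $TM$ and then checking that each component is separately a vector bundle isomorphism over $TM$. Concretely, since $\pr_M=p_M\oplus c_M$, applying the tangent functor gives the identification
\[
T\mathsf P_M \;\cong\; TTM \times_{TM} T(T^*M),
\]
where the fiber product is taken along $Tp_M$ and $Tc_M$, and the map $T\pr_M$ is the resulting projection to $TM$. On the other hand, by definition
\[
\mathsf P_{TM} \;\cong\; TTM \times_{TM} T^*(TM),
\]
the fiber product taken along $p_{TM}$ and $c_{TM}$. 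Thus it suffices to prove that $\sigma$ is an isomorphism of vector bundles $(TTM,Tp_M)\to(TTM,p_{TM})$ over $TM$, and that $\varsigma$ is an isomorphism $(T(T^*M),Tc_M)\to(T^*(TM),c_{TM})$ over $TM$.

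For the first component, the canonical involution $\sigma:TTM\to TTM$ is a well-known involutive diffeomorphism whose defining property is precisely that it intertwines the two natural vector bundle structures $Tp_M$ and $p_{TM}$ on $TTM$; in particular $p_{TM}\circ\sigma=Tp_M$ and $\sigma$ is linear for the corresponding fibers, so it is a vector bundle isomorphism over $TM$.

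For the second component, apply the general construction from Section~\ref{canonical_id_subsec} with $A=TM$ (so that $q_A=p_M$). The nondegeneracy of the pairing $\langle\langle\cdot\,,\cdot\rangle\rangle_{TM}$ (see \eqref{def_of_I}) yields a vector bundle isomorphism $I:T(T^*M)\to(TTM)^\vee$ over $TM$, where $(TTM)^\vee$ denotes the dual of $(TTM,Tp_M)$. Since $\sigma$ is a vector bundle isomorphism over $TM$ and involutive ($\sigma^2=\Id$), its transpose $\sigma^*$ furnishes a vector bundle isomorphism $T^*(TM)\to (TTM)^\vee$ over $TM$, and taking its inverse (which coincides with $\sigma^*$ again by $\sigma^2=\Id$) gives an isomorphism $(TTM)^\vee\to T^*(TM)$ over $TM$. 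Composing with $I$ produces $\varsigma:T(T^*M)\to T^*(TM)$, a vector bundle isomorphism over $TM$ fitting into the square displayed just before the statement.

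Putting the two components together, $\Sigma=(\sigma,\varsigma)$ sends the fiber product $TTM\times_{TM}T(T^*M)$ to the fiber product $TTM\times_{TM}T^*(TM)$ bijectively and fiberwise linearly, and satisfies $\pr_{TM}\circ\Sigma=T\pr_M$; hence $\Sigma$ is a vector bundle isomorphism. The only real subtlety to watch is bookkeeping of the two distinct vector bundle structures on $TTM$ (and similarly the identification of $T\mathsf P_M$ with a fiber product along $Tp_M$ and $Tc_M$), which is exactly what makes the canonical involution $\sigma$ and the Legendre-type map $I$ enter the construction.
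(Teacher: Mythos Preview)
Your proposal is correct and follows exactly the route the paper implicitly takes: the paper does not give a separate proof of this proposition, since the preceding paragraphs already establish that $\sigma:(TTM,Tp_M)\to(TTM,p_{TM})$ and $\varsigma=\sigma^*\circ I:(T(T^*M),Tc_M)\to(T^*(TM),c_{TM})$ are vector bundle isomorphisms over $TM$, so that $\Sigma=(\sigma,\varsigma)$ is an isomorphism on the fiber products. Your write-up simply makes this explicit, including the bookkeeping about the two bundle structures on $TTM$ and the involutivity of $\sigma$ needed to interpret $\sigma^*$ in the right direction.
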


\subsection{Lie functor from $\mathsf P_\Gamma$ to $\mathsf P_A$}

\begin{prop}
  Let $\Gamma\rr M$ be a Lie groupoid with Lie algebroid $A$.  Then
  the Lie algebroid of $\mathsf P_\Gamma$ is canonically isomorphic to
  $\mathsf P_A$.

  Moreover, the pairing $\langle\cdot\,,\cdot\rangle_\Gamma$ is a
  groupoid morphism: $\mathsf P_\Gamma\times_\Gamma\mathsf P_\Gamma\to
  \R$.  Its corresponding Lie algebroid morphism coincides with the
  pairing $\langle\cdot\,,\cdot\rangle_A: \mathsf P_A\times_A \mathsf
  P_A \to \R$, under the canonical isomorphism $\mathsf A(\mathsf
  P_\Gamma)\cong \mathsf P_A$.
\end{prop}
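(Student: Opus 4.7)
The plan is to decompose the statement into two parts and verify each by combining known groupoid–algebroid correspondences with the canonical identifications set up in Section~\ref{canonical_id_subsec}.

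\textbf{First, the algebroid identification.} By construction, $\mathsf P_\Gamma=T\Gamma\oplus T^*\Gamma\rr TM\oplus A^*$ is the fibered product of the tangent groupoid $T\Gamma\rr TM$ and the cotangent groupoid $T^*\Gamma\rr A^*$ over $\Gamma$ (equivalently, over $M$ at the level of bases). Two standard facts apply: the Lie algebroid of $T\Gamma$ is the tangent prolongation $TA\to TM$ (applying the tangent functor to source, target, multiplication, and unit map), while the Lie algebroid of $T^*\Gamma\rr A^*$ is $T^*A\to A^*$ (via the Mackenzie--Xu construction, using the Legendre transformation to recognise it as the Lie algebroid dual of the linear Poisson groupoid structure on $A^*$ encoding the Lie algebroid $A$). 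The Lie functor $\mathsf A$ sends fibered products of Lie groupoids to fibered products of Lie algebroids, giving
\[
\mathsf A(\mathsf P_\Gamma)=\mathsf A(T\Gamma)\times_{\mathsf A(\Gamma)}\mathsf A(T^*\Gamma)=TA\times_A T^*A=\mathsf P_A.
\]
That the projection $\mathsf P_\Gamma\to \Gamma$ differentiates to the projection $\mathsf P_A\to A$ is immediate from the construction.

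\textbf{Second, multiplicativity of the pairing.} I would verify that $\langle\cdot\,,\cdot\rangle_\Gamma:\mathsf P_\Gamma\times_\Gamma\mathsf P_\Gamma\to\RR$ is a groupoid morphism by a direct calculation on composable pairs. Write two composable elements as $(v_g,\alpha_g),(w_g,\beta_g)\in\mathsf P_\Gamma(g)$ and $(v_h,\alpha_h),(w_h,\beta_h)\in\mathsf P_\Gamma(h)$ with $\s(g)=\tg(h)$ and matching source/target data over $TM\oplus A^*$. By definition of the cotangent groupoid product, $(\alpha_g\star\alpha_h)(w_g\star w_h)=\alpha_g(w_g)+\alpha_h(w_h)$, and symmetrically $(\beta_g\star\beta_h)(v_g\star v_h)=\beta_g(v_g)+\beta_h(v_h)$. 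Adding and using the formula for $\langle\cdot\,,\cdot\rangle_\Gamma$ yields exactly the sum of the pairings at $g$ and $h$, so the pairing intertwines multiplications.

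\textbf{Third, differentiating the pairing.} Applying $\mathsf A$ to the multiplicative map $\langle\cdot\,,\cdot\rangle_\Gamma$ produces a Lie algebroid morphism $\mathsf P_A\times_A\mathsf P_A\to\RR$, and I must identify this with $\langle\cdot\,,\cdot\rangle_A$. This is where the canonical identifications from Section~\ref{canonical_id_subsec} come in: the map $\mathsf A(T^*\Gamma)\cong T^*A$ is set up precisely so that the tangent of the evaluation pairing $\lbag\cdot\,,\cdot\rbag_A:A\oplus A^*\to\RR$, composed with the canonical involution $\sigma$ and the duality isomorphism $I$, gives back the evaluation pairing on $TA\oplus T^*A$ (this is essentially the content of formula~\eqref{def_of_I} and the diagram defining $\Sigma$ in~\eqref{def_of_Sigma}). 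Tracking the second component of $T\lbag\cdot\,,\cdot\rbag_\Gamma$ under $\Sigma_\Gamma$ produces a sum of two terms, which after the $\sigma,I$ identifications become precisely the symmetric evaluation $\lbag a,\eta\rbag_A+\lbag b,\xi\rbag_A$ defining $\langle\cdot\,,\cdot\rangle_A$ in \eqref{sym_bracket}.

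\textbf{Main obstacle.} The first part is essentially formal once the two Lie algebroid identifications for $T\Gamma$ and $T^*\Gamma$ are in hand. The real work lies in the third paragraph: one must chase through the Legendre transform $R$, the canonical involution $\sigma$, and the duality $I$ simultaneously, keeping the signs and side-bundle identifications consistent, to confirm that the naively differentiated pairing on $TA\oplus\mathsf A(T^*\Gamma)$ is transported into the standard Pontryagin pairing on $TA\oplus T^*A$ under the canonical isomorphism $\mathsf A(\mathsf P_\Gamma)\cong\mathsf P_A$. Once this bookkeeping is done, the statement follows.
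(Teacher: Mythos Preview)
Your proposal is correct and follows essentially the same route as the paper: decompose $\mathsf P_\Gamma$ into its tangent and cotangent factors, invoke the identifications $\mathsf A(T\Gamma)\cong TA$ via $\sigma_\Gamma$ and $\mathsf A(T^*\Gamma)\cong T^*A$ via $\varsigma_\Gamma$, and then track the pairing through $\Sigma_\Gamma=(\sigma_\Gamma,\varsigma_\Gamma)$.

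One organizational difference worth noting: the paper \emph{defines} the isomorphism $\varsigma_\Gamma:\mathsf A(T^*\Gamma)\to T^*A$ as $(\sigma_\Gamma^{-1})^*\circ I_\Gamma$, where $I_\Gamma$ is the duality isomorphism induced by the nondegenerate pairing $\mathsf A(\lbag\cdot\,,\cdot\rbag_{T\Gamma})$. With this definition, the compatibility of $\mathsf A(\lbag\cdot\,,\cdot\rbag_{T\Gamma})$ with $\lbag\cdot\,,\cdot\rbag_{TA}$ under $\Sigma_\Gamma$ is automatic, and the remaining work (carried out in Lemma~\ref{relation_Sigma_SigmaGamma}) is to check that this $\varsigma_\Gamma$ agrees with the restriction of $\varsigma:T(T^*\Gamma)\to T^*(T\Gamma)$ to the algebroid. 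You instead take the identification $\mathsf A(T^*\Gamma)\cong T^*A$ as already known (via the Legendre transformation) and propose to verify the pairing compatibility separately; this is the same content in reverse order, and your ``main obstacle'' paragraph correctly identifies where the bookkeeping lies.
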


This is a standard result. Below we recall its proof which
will be useful in the following.

For any Lie groupoid $\Gamma\rr M$ with Lie algebroid $A\to M$, the
tangent bundle projection $p_\Gamma:T\Gamma\to\Gamma$ is a groupoid
morphism over $p_M:TM\to M$ and applying the Lie functor gives a
canonical morphism $\mathsf A(p_\Gamma):\mathsf A(T\Gamma)\to A$.
This acquires a vector bundle structure by applying $\mathsf A(\cdot)$
to the operations in $T\Gamma\to \Gamma$. This yields a system of
vector bundles
\begin{displaymath}
\begin{xy}
  \xymatrix{
    \mathsf A(T\Gamma)\ar[dd]_{\mathsf A(p_\Gamma)}\ar[rr]^{q_{\mathsf A(T\Gamma)}}&&TM\ar[dd]^{p_M}\\
    &&\\
    A\ar[rr]_{q_A}&&M }
\end{xy}
\end{displaymath}
in which $\mathsf A(T\Gamma)$ has two vector bundle structures, the
maps defining each being morphisms with respect to the other; that is
to say, $\mathsf A(T\Gamma)$ is a double vector bundle.

Associated with the vector bundle $q_A:A\lon M$ is the tangent double
vector bundle
\begin{displaymath} 
\begin{xy}
\xymatrix{
TA\ar[d]_{p_A}\ar[r]^{Tq_A}&TM\ar[d]^{p_M}\\
A\ar[r]_{q_A}&M
}
\end{xy}.    
\end{displaymath}
It is shown in \cite{MaXu94} that the canonical involution
$\sigma:T(T\Gamma)\to T(T\Gamma)$ restricts to a canonical map
$$
\sigma_\Gamma:\mathsf A(T\Gamma)    \to TA
$$
which is an isomorphism of double vector bundles preserving the side
bundles.  Note that here, $A$ is seen as a submanifold of $T\Gamma$.

Similarly, the cotangent groupoid structure $T^*\Gamma\rr A^*$ is
defined by maps which are vector bundle morphisms and, reciprocally,
the operations in the vector bundle $c_\Gamma:T^*\Gamma\lon \Gamma$
are groupoid morphisms. Taking the Lie algebroid of $T^*\Gamma\rr A^*$
we get a double vector bundle
\begin{equation}\label{diag:AT*G}
\begin{xy}
  \xymatrix{
    \mathsf A(T^*\Gamma)\ar[dd]_{\mathsf A(c_\Gamma)}\ar[rr]^{q_{\mathsf A(T^*\Gamma)}}&&A^*\ar[dd]^{p_M}\\
    &&\\
    A\ar[rr]_{q_A}&&M }
\end{xy}
\end{equation}
where the vector bundle operations in $\mathsf A(T^*\Gamma)\lon A$ are
obtained by applying the Lie functor to those in $T^*\Gamma\to
\Gamma$.

It follows from the definitions of the operations in $T^*\Gamma\rr
A^*$ that the canonical pairing
$\lbag\cdot\,,\cdot\rbag_{T\Gamma}:T\Gamma\times_{\Gamma} T^*\Gamma
\to\R$ is a groupoid morphism into the additive group(oid) $\R$.
Hence $\lbag\cdot\,,\cdot\rbag_{T\Gamma}$ induces a Lie algebroid
morphism $\mathsf A(\lbag\cdot\,,\cdot\rbag_{T\Gamma}): \mathsf
A(T\Gamma)\times_A \mathsf A(T^*\Gamma)\to \mathsf A(\R)=\R$.  Note
that $\mathsf A(\lbag\cdot\,,\cdot\rbag_{T\Gamma})$ is the restriction
to $\mathsf A(T\Gamma)\times_A\mathsf A(T^*\Gamma)$ of
$\langle\langle\cdot\,,\cdot\rangle\rangle_{T\Gamma}:
T(T\Gamma)\times_\Gamma T(T^*\Gamma)\to\R$.

As noted in \cite{MaXu94}, $\mathsf
A(\lbag\cdot\,,\cdot\rbag_{T\Gamma})$ is nondegenerate, and therefore
induces an isomorphism of double vector bundles $I_\Gamma:\mathsf
A(T^*\Gamma)\to \mathsf A(T\Gamma)^\vee$, where $\mathsf
A(T\Gamma)^\vee$ is the dual of $\mathsf A(T\Gamma)\to A$. Now
dualizing $\sigma_\Gamma\inv:TA\to \mathsf A(T\Gamma)$ over $A$, we
define
$$
\varsigma_\Gamma = (\sigma_\Gamma\inv)^*\circ I_\Gamma: \mathsf A(T^*\Gamma)\to T^*A;
$$
this is an isomorphism of double vector bundles preserving the side
bundles.  The Lie algebroids $T^*A\to A^*$ and $\mathsf
A(T^*\Gamma)\to A^*$ are isomorphic via $\varsigma_\Gamma$.

The Lie algebroid of the direct sum $\mathsf P_\Gamma=T\Gamma\oplus
T^*\Gamma$ is equal to
\[\mathsf A(\mathsf P_\Gamma)=T_{U}^{\T s}\mathsf P_\Gamma,\]
where we write $U$ for the unit space of $\mathsf P_G$; i.e.,
$U:=TM\oplus A^*$.  By the considerations above, we have a Lie algebroid morphism
$\Sigma_\Gamma=\Sigma\an{\mathsf A(\mathsf
  P_\Gamma)}=\left(\sigma_\Gamma,\varsigma_\Gamma\right)$: 
\begin{displaymath}
\begin{xy}
  \xymatrix{&A\ar[ddd]\ar[rr]^{\Id}&&A\ar[ddd]\\
    \mathsf A(\mathsf P_\Gamma)\ar[ur]\ar[rr]^{\Sigma_\Gamma}
    \ar[ddd]&& \mathsf P_A=TA\oplus T^*A\ar[ddd]\ar[ur]&\\
    \\
    &M\ar[rr]_{\qquad \Id}&&M\\
    TM\oplus A^* \ar[rr]_{\Id}\ar[ur]&&TM\oplus A^*\ar[ur]& }
\end{xy}
\end{displaymath}
preserving the side bundles $A$ and $TM\oplus A^*$.

Recall that we have also a
map \[\Sigma=(\sigma,\varsigma):T\mathsf P_\Gamma\to \mathsf
P_{T\Gamma}.\]
\begin{lem}\label{relation_Sigma_SigmaGamma}
Let $\Gamma\rr M$ be a Lie groupoid and $u$ an element of $\mathsf
A(\mathsf P_\Gamma)\subseteq T\mathsf P_\Gamma$ projection to $a_m\in
A$ and $(v_m,\alpha_m)\in T_mM\times A^*_m$.
Then, if $\Sigma_\Gamma(u)=(v_{a_m},\alpha_{a_m})\in \mathsf P_A(a_m)$ and 
$\Sigma(u)=(\tilde v_{a_m}, \tilde \alpha_{a_m})\in\mathsf
P_{T\Gamma}(a_m)$, we have 
\[\iota_*v_{a_m}=\tilde v_{a_m} \quad \text{ and }\quad \alpha_{a_m}=\tilde\alpha_{a_m}\an{T_{a_m}A}.\]
\end{lem}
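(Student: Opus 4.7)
The plan is to exploit the fact that both $\Sigma$ and $\Sigma_\Gamma$ split into a ``tangent'' component ($\sigma$, respectively $\sigma_\Gamma$) and a ``cotangent'' component ($\varsigma$, respectively $\varsigma_\Gamma$), so that the two assertions of the lemma can be proved independently. I would accordingly decompose $u$ as $u_1\oplus u_2$ with $u_1\in\mathsf{A}(T\Gamma)$ and $u_2\in\mathsf{A}(T^*\Gamma)$, viewed also as elements of $T(T\Gamma)$ and $T(T^*\Gamma)$ through the natural inclusions, and deal with $u_1$ and $u_2$ in turn.

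For the tangent assertion $\iota_*v_{a_m}=\tilde v_{a_m}$, the claim is essentially the very characterization of $\sigma_\Gamma$ recalled just above the lemma: the canonical involution $\sigma:T(T\Gamma)\to T(T\Gamma)$ maps the subset $\mathsf{A}(T\Gamma)\subseteq T(T\Gamma)$ bijectively onto $T\iota(TA)\subseteq T(T\Gamma)$, and the resulting map is precisely $T\iota\circ\sigma_\Gamma$. This yields $T\iota(\sigma_\Gamma(u_1))=\sigma(u_1)$ at once.

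For the cotangent assertion, I would unwind the defining formulas $\varsigma=\sigma^*\circ I$ and $\varsigma_\Gamma=(\sigma_\Gamma^{-1})^*\circ I_\Gamma$. The key ingredient, already highlighted in the text, is that $I_\Gamma$ is induced by $\mathsf{A}(\lbag\cdot\,,\cdot\rbag_{T\Gamma})$, which is the restriction of $\langle\langle\cdot\,,\cdot\rangle\rangle_{T\Gamma}$ to $\mathsf{A}(T\Gamma)\times_A\mathsf{A}(T^*\Gamma)$; equivalently, the functional $I(u_2)$ restricted to $\mathsf{A}(T\Gamma)\subseteq T(T\Gamma)$ coincides with $I_\Gamma(u_2)$. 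Combined with the tangent assertion (and the fact that $\sigma^2=\Id$, which gives $\sigma(T\iota(w))=\sigma_\Gamma^{-1}(w)$ for every $w\in TA$), a direct unwinding then yields, for all $w\in T_{a_m}A$,
\[
\tilde\alpha_{a_m}(T\iota(w))=I(u_2)\bigl(\sigma(T\iota(w))\bigr)=I_\Gamma(u_2)(\sigma_\Gamma^{-1}(w))=\varsigma_\Gamma(u_2)(w)=\alpha_{a_m}(w),
\]
which is precisely the claimed restriction identity.

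The main obstacle is essentially bookkeeping: at each step one must carefully keep track of which of the two vector bundle structures on $T(T\Gamma)$ (via $p_{T\Gamma}$ or $Tp_\Gamma$) is being dualized, and verify that $\sigma(T\iota(w))$ does indeed lie in $\mathsf{A}(T\Gamma)$ so that the compatibility $I(u_2)\vert_{\mathsf{A}(T\Gamma)}=I_\Gamma(u_2)$ is applicable. This last point follows because $T\iota(w)$ has base point $a_m\in A\subseteq T\Gamma$ under $p_{T\Gamma}$, hence $\sigma(T\iota(w))$ has base point in $T\epsilon(TM)$ under $Tp_\Gamma$, and naturality of $\sigma$ with respect to $T\mathsf s$ together with $T\mathsf s\circ\iota=0$ gives the required $\mathsf s$-verticality. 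Once this bookkeeping is settled, no further content beyond the definitions and the classical Mackenzie--Xu identifications recalled in the excerpt is required.
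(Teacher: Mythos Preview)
Your proposal is correct and follows essentially the same approach as the paper's proof: both split $u$ into its $\mathsf{A}(T\Gamma)$ and $\mathsf{A}(T^*\Gamma)$ components, dispose of the tangent assertion immediately from the definition of $\sigma_\Gamma$, and then verify the cotangent assertion by pairing against an arbitrary $w\in T_{a_m}A$ and unwinding $\varsigma=\sigma^*\circ I$ versus $\varsigma_\Gamma=(\sigma_\Gamma^{-1})^*\circ I_\Gamma$ via the key compatibility $I(u_2)\vert_{\mathsf{A}(T\Gamma)}=I_\Gamma(u_2)$. The paper's chain of equalities is the same computation you wrote, merely parametrized by $y=\sigma_\Gamma^{-1}(w)$ instead of $w$; your extra bookkeeping paragraph (checking $\sigma(T\iota(w))\in\mathsf{A}(T\Gamma)$) is subsumed in the paper by the earlier statement that $\sigma_\Gamma:\mathsf{A}(T\Gamma)\to TA$ is a bijection.
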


\begin{proof}
The first equality follows immediately from the definition of
$\sigma_\Gamma$.

Choose $T_{a_m}A\ni w_{a_m}=\sigma_\Gamma(y)$ for some $y\in\mathsf
A(T\Gamma)\subseteq T(T\Gamma)$ and $\tilde w_{a_m}:=\iota_*w_{a_m}$.
Write also $U=(x,\xi)$ with $x\in\mathsf A(\mathsf P_\Gamma)$ and 
$\xi\in\mathsf A(T^*\Gamma)\subseteq T(T^*\Gamma)$, i.e. $\alpha_{a_m}=\varsigma_\Gamma(\xi)$.
Then we have $\mathsf A(p_\Gamma)(y)=\mathsf A(c_\Gamma)(\xi)$ and we
can compute
\begin{align*}
\lbag  w_{a_m}, \alpha_{a_m}\rbag_{TA}
=&\lbag y, I_\Gamma(\xi)\rbag_{\mathsf A(T\Gamma)}\\
=&\mathsf A(\lbag\cdot\,,\cdot\rbag_{T\Gamma)}(y,\xi)= T(\lbag\cdot\,,\cdot\rbag_{T\Gamma)})(y,\xi)\\
=& \langle\langle y, \xi\rangle\rangle = \lbag y, I(\xi)\rbag_{TP_\Gamma}\\
=& \lbag \sigma(y), \varsigma(\xi)\rbag_{p_{T\Gamma}}=\lbag \tilde
w_{a_m}, \tilde\alpha_{a_m}\rbag_{p_{T\Gamma}}
=\lbag  w_{a_m}, \iota^*\tilde\alpha_{a_m}\rbag_{TA}.
\end{align*}
\end{proof}

The pairing $\langle\cdot\,,\cdot\rangle_\Gamma$ is a groupoid
morphism $\mathsf P_\Gamma\times_\Gamma\mathsf P_\Gamma\to \R$.
Hence, we can consider the Lie algebroid morphism
\[\mathsf A(\langle\cdot\,,\cdot\rangle_\Gamma):\mathsf A(\mathsf P_\Gamma)\times_A\mathsf A(\mathsf P_\Gamma)
\to \mathsf A(\R)=\R.\]

We have 
\begin{equation}\label{A_of_bracket}
  \mathsf A(\langle\cdot\,,\cdot\rangle_\Gamma)=\left(\pr_2\circ T\langle\cdot\,,\cdot\rangle_\Gamma\right)
  \an{\mathsf A(\mathsf P_\Gamma)\times \mathsf A(\mathsf P_\Gamma)}.
\end{equation}

We can see from the proof of the last lemma that
 $\mathsf A(\lbag\cdot\,,\cdot\rbag_{T\Gamma})$ coinsides with
 $\lbag\cdot\,\cdot\rbag_{TA}$
under the isomorphism $\Sigma_\Gamma$.
Hence,
$\mathsf A(\langle\cdot\,,\cdot\rangle_\Gamma)$ coincides with the pairing $\langle\cdot\,,\cdot\rangle_A:
\mathsf P_A\times_A \mathsf P_A \to \R$, under the canonical
isomorphism $\mathsf A(\mathsf P_\Gamma)\cong \mathsf P_A$.

\section{Multiplicative generalized complex geometry}

\subsection{\Glanon groupoids}
\begin{defn}
  Let $\Gamma\rr M$ be a Lie groupoid with Lie algebroid $A\to M$.  A
  multiplicative generalized complex structure on $\Gamma$ is a Lie
  groupoid morphism
\begin{displaymath}
\begin{xy}
  \xymatrix{ T\Gamma\oplus T^*\Gamma\ar[rr]^{\mathcal J}
    \ar@<.6ex>^{\T\s}[dd]\ar@<-.6ex>_{\T\tg}[dd]&& T\Gamma\oplus T^*\Gamma\ar@<.6ex>^{\T\s}[dd]\ar@<-.6ex>_{\T\tg}[dd]\\
    \\
    TM\oplus A^* \ar[rr]_{j_u}&&TM\oplus A^* }
\end{xy}
\end{displaymath}
such that $\J$ is a generalized complex structure.

The pair $(\Gamma\rr M, \mathcal J)$ is then called a \emph{\Glanon
  groupoid}.
\end{defn}

This is equivalent to $\mathsf D_{\mathcal J}$ and $\overline{\mathsf
  D_{\mathcal J}}$, the eigenspaces of $\mathcal J:(T\Gamma\oplus
T^*\Gamma)\otimes\C\to (T\Gamma\oplus T^*\Gamma)\otimes\C$ to the
eigenvalues $i$ and $-i$ being multiplicative Dirac structures on
$\Gamma\rr M$ \cite{Ortiz08t, Jotz12b}.

Note also that $\mathcal J$ is a Lie groupoid morphism if and only if
the maps $N:T\Gamma\to T\Gamma$, $\phi^\sharp:T^*\Gamma\to T\Gamma$,
$N^*:T^*\Gamma\to T^*\Gamma$ and $\omega^\flat:T\Gamma\to T^*\Gamma$
such that
$$\mathcal J=\begin{pmatrix}
  N&\pi^\sharp\\
  \omega^\flat&-N^*\end{pmatrix}
$$
are all Lie groupoid morphisms.

In particular, we have the following

\begin{prop}
  If $\Gamma$ is a \Glanon groupoid, then $\Gamma$ is naturally a
  Poisson groupoid.
\end{prop}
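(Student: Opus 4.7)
The proof proposal breaks into two pieces which have both already been set up in the paper. First, I need to produce a Poisson bivector on $\Gamma$; second, I need to check that this bivector is multiplicative.

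For the existence of the Poisson bivector, I would invoke the standard decomposition recalled in \S\ref{sec:gcs} (equation~\eqref{J}): any generalized complex structure $\J$ on $\Gamma$ can be written in block form
\[
\J=\begin{pmatrix} N & \pi^\sharp\\ \omega^\flat & -N^*\end{pmatrix},
\]
and the cited identities (see \cite{Crainic04}, or the Gualtieri/Bailey observation alluded to in the introduction: ``the base manifold of a generalized complex manifold is automatically a Poisson manifold'') force $\pi\in\Gamma(\wedge^2 T\Gamma)$ to be a Poisson bivector field on $\Gamma$. So $(\Gamma,\pi)$ is a Poisson manifold; the question is only whether $\pi$ is compatible with the groupoid structure.

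For multiplicativity, I would use the remark stated immediately before the proposition: since $\J$ is a Lie groupoid morphism with respect to the Courant groupoid structure $\mathsf P_\Gamma\rr TM\oplus A^*$, all four block components of $\J$ are individually Lie groupoid morphisms. In particular, the off-diagonal block
\[
\pi^\sharp \colon T^*\Gamma\longrightarrow T\Gamma
\]
is a groupoid morphism from the cotangent groupoid $T^*\Gamma\rr A^*$ to the tangent prolongation $T\Gamma\rr TM$ (the base map being the $\pi^\sharp$-component of $j_u\colon TM\oplus A^*\to TM\oplus A^*$). By the classical characterization of multiplicative bivectors (Mackenzie--Xu \cite{MaXu94}; see also the discussion of the cotangent groupoid structure in \S\textbf{The cotangent Lie groupoid\ldots}), a bivector field $\pi$ on $\Gamma$ is multiplicative if and only if $\pi^\sharp\colon T^*\Gamma\to T\Gamma$ is a morphism of Lie groupoids in exactly this sense. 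Combining with the previous step, $\pi$ is a multiplicative Poisson bivector, i.e., $(\Gamma\rr M,\pi)$ is a Poisson groupoid.

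There is no genuine obstacle here: both ingredients are essentially quoted from the literature and from the block decomposition of $\J$. The only point that deserves a sentence of justification is to make explicit that the restriction of a block-diagonal groupoid morphism to its off-diagonal block remains a groupoid morphism; this follows because the Courant groupoid $\mathsf P_\Gamma\rr TM\oplus A^*$ is, as recalled in the paper, the fibred product groupoid $T\Gamma\times_\Gamma T^*\Gamma\rr TM\times_M A^*$, so each projection and each injection is a groupoid morphism and morphisms compose.
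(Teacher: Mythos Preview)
Your proposal is correct and follows exactly the line the paper intends: the proposition is stated without a separate proof, but the two sentences preceding it (that $\pi$ is a Poisson bivector from the block decomposition of $\J$, and that each block of a multiplicative $\J$ is itself a groupoid morphism) together with the Mackenzie--Xu characterization of multiplicative bivectors via $\pi^\sharp\colon T^*\Gamma\to T\Gamma$ constitute the entire argument. Your write-up simply makes these steps explicit.
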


\begin{example}[\Glanon groups]
  Let $G\rr \{*\}$ be a \Glanon Lie group.  Since any multiplicative
  two-form must vanish (see for instance \cite{Jotz12b}), the
  underlying generalized complex structure on $G$ is equivalent to a
  multiplicative holomorphic Poisson structure.  Therefore, \Glanon
  Lie groups are in one-one correspondence with complex Poisson Lie
  groups.
\end{example}

\begin{example}[Symplectic groupoids]
\label{ex:b}
Consider a Lie groupoid $\Gamma\rr M$ equipped with a non-degenerate
two-form $\omega$.  Then the map
 $$\J_\omega=\begin{pmatrix} 0 & -\left(\omega^{\bemol}\right)^{-1} \\ \omega^{\bemol}
   & 0 \end{pmatrix}, $$ where $\omega^{\bemol} : T\Gamma\to
 T^*\Gamma$ is the bundle map $X\mapsto \ip{X}\omega$, defines a
 \Glanon groupoid structure on $\Gamma\rr M$ if and only if $(\Gamma,
 \omega)$ is a symplectic groupoid.
\end{example}

\begin{example}[Holomorphic Lie groupoids]
\label{ex:c}
Let $\Gamma$ be a holomorphic Lie groupoid with $J_\Gamma: T\Gamma \to
T\Gamma$ being its almost complex structure.  Then the map
 $$\J=
 \begin{pmatrix} J_\Gamma & 0 \\0 & -J_\Gamma^* \end{pmatrix}, $$
 defines a \Glanon groupoid structure on $\Gamma$
\end{example}

\subsection{\Glanon Lie algebroids}

Let $A$ be a Lie algebroid over $M$. Recall that $\mathsf P_A
=TA\oplus T^*A$ has the structure of a Lie algebroid over $TM\oplus
A^*$.

\begin{defn}
  A \Glanon Lie algebroid is a Lie algebroid $A$ endowed with a
  generalized complex structure $\mathcal J_A:\mathsf P_ A\to \mathsf
  P_ A$ that is a Lie algebroid morphism.
\end{defn}

\begin{prop}
If $A$ is a \Glanon Lie algebroid, then $(A, A^*)$ is a 
Lie bialgebroid.
\end{prop}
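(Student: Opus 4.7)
The plan is to extract the Poisson bivector from the block decomposition of $\mathcal{J}_A$, verify that it defines a linear Poisson structure on the vector bundle $A\to M$, and then invoke the Mackenzie--Xu characterization of Lie bialgebroids.

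First, I will use the matrix formula \eqref{J} to write
\[
\mathcal{J}_A = \begin{pmatrix} N_A & \pi_A^{\sharp} \\ \omega_A^{\flat} & -N_A^{*} \end{pmatrix},
\]
so that $\pi_A$ is automatically a Poisson bivector on the total space of $A$. The Lie algebroid automorphism hypothesis supplies a diffeomorphism $j_u\colon TM\oplus A^{*}\to TM\oplus A^{*}$ such that $\mathcal{J}_A$ covers $j_u$ as a vector bundle morphism $\mathsf P_A\to \mathsf P_A$ over $TM\oplus A^{*}$. Since $\mathcal{J}_A$ is simultaneously an endomorphism over the identity of $A$ for the bundle $\mathsf P_A\to A$, it becomes a morphism of double vector bundles. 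Reading off components, $\pi_A^{\sharp}\colon T^{*}A\to TA$ is then a morphism of double vector bundles from $(T^{*}A;A,A^{*};M)$ to $(TA;A,TM;M)$, covering the restriction of $j_u$ to $A^{*}$. This is the standard intrinsic characterization of a \emph{linear} bivector on $A\to M$; such a linear Poisson structure endows $A^{*}$ with a Lie algebroid structure via the usual duality between linear Poisson bivectors on $A$ and Lie brackets on $\Gamma(A^{*})$.

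Next, I will verify the compatibility between the Lie algebroid structures on $A$ and $A^{*}$. By the Mackenzie--Xu characterization, $(A,A^{*})$ is a Lie bialgebroid if and only if $\pi_A^{\sharp}\colon T^{*}A\to TA$ is a Lie algebroid morphism from the cotangent Lie algebroid $T^{*}A\to A^{*}$ (built from $A$, as recalled in the preliminaries) to the tangent prolongation $TA\to TM$. I will obtain this by factoring
\[
\pi_A^{\sharp}\,\colon\, T^{*}A\hookrightarrow \mathsf P_A \xrightarrow{\mathcal{J}_A} \mathsf P_A \twoheadrightarrow TA
\]
as the composition of the canonical inclusion, of $\mathcal{J}_A$ itself, and of the canonical projection. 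All three are Lie algebroid morphisms over $TM\oplus A^{*}$: the inclusion and projection because $\mathsf P_A = TA\oplus T^{*}A$ is a direct sum of Lie algebroids over $TM\oplus A^{*}$, and $\mathcal{J}_A$ by hypothesis. Hence their composition $\pi_A^{\sharp}$ is a Lie algebroid morphism.

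The main obstacle I anticipate is the passage from ``$\mathcal{J}_A$ is a Lie algebroid automorphism'' to the separate assertions about its individual block entries behaving as morphisms of the appropriate (double) Lie algebroids; this relies on the direct-sum structure of $\mathsf P_A$ over $TM\oplus A^{*}$ and on careful bookkeeping with the double vector bundle picture developed in the preliminaries. Once these component-wise statements are secured, the remainder of the argument reduces to invoking the linear-bivector/dual-algebroid correspondence and the Mackenzie--Xu criterion.
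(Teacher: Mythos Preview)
Your argument is correct and is precisely the approach the paper has in mind. The paper does not spell out a proof of this proposition in the main body; it only indicates in the introduction that the Poisson bivector $\pi_A$ is linear (so $A^*$ is a Lie algebroid) and that $(A,A^*)$ is then a Lie bialgebroid. The one place where the paper actually carries out the relevant step is in the holomorphic application (the proof of the proposition characterizing \Glanon Lie algebroids via $(A_R,A_I^*)$), where it writes: ``the map $\pi_{A_I^*}^\sharp$ is a morphism of Lie algebroids $T^*A\to TA$, and it follows that $(A_R,A_I^*)$ is a Lie bialgebroid \cite{MaXu00}.'' This is exactly your factorization-and-Mackenzie--Xu argument specialized to that case, so your proposal fills in the general proof along the lines the authors intended.

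One small point of bookkeeping worth making explicit in your write-up: the inclusion $T^*A\hookrightarrow \mathsf P_A$ and the projection $\mathsf P_A\twoheadrightarrow TA$ are Lie algebroid morphisms over the inclusion $A^*\hookrightarrow TM\oplus A^*$ and the projection $TM\oplus A^*\twoheadrightarrow TM$, respectively, because $\mathsf P_A$ is the \emph{fiber-product} Lie algebroid $TA\times_A T^*A$ over $TM\times_M A^*$ (as recalled in the preliminaries). Your composite $\pi_A^\sharp$ is then a Lie algebroid morphism over the composite base map $A^*\to TM$, which is automatically the anchor of $A^*$; this is precisely the input to the Mackenzie--Xu criterion.
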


\begin{example}[\Glanon Lie algebra]
Let $\li g$ be a Lie algebra. Then $\mathsf P_{\li g}=\li g\times
( \li g\oplus  \li g^*)$
is a Lie algebroid over $\li g^*$. Hence, a map
\[\mathcal J:\mathsf P_{\li g}\to \mathsf P_{\li g},\]
\[\mathcal J(x,y,\xi)=(x,\J_{x, \li{g}}(y,\xi), \J_{x, \li{g}^*}(y,\xi))\]
can only be a Lie algebroid morphism if $\J_{x, \li{g}^*}(y,\xi)=
\J_{x', \li{g}^*}(y',\xi)$ for all $x,y,x',y'\in\li g$. In particular,
$\J_{x, \li{g}^*}(y,\xi)= \J_{x, \li{g}^*}(y',\xi)$ for all
$x,y,y'\in\li g$ and the map $\mathcal J_x:\{x\}\times\li g\times\li
g^*\to \{x\}\times\li g\times\li g^*$ has the matrix
\[\begin{pmatrix}
n_x&\pi_x^\sharp\\
0&-n_x^*
\end{pmatrix}.\] It thus follows that it must be equivalent to a
complex Lie bialgebra.
\end{example}

\begin{example}[Symplectic Lie algebroid]
\label{ex:bb}
Let $(M, \pi)$ be a Poisson manifold.  Let $A$ be the cotangent Lie
algebroid $A=(T^*M)_\pi$.  Then the map
\[\begin{pmatrix}
0&\left(\omega_A^\flat\right)\inv\\
\omega_A^\flat& 0
\end{pmatrix}\] where $\omega_A$ is the canonical cotangent symplectic
structure on $A$, defines a \Glanon Lie algebroid structure on $A$.
\end{example}

\begin{example}[Holomorphic Lie algebroids]
\label{ex:cc}
Let $A$ be a holomorphic Lie algebroid. Let $A_R$ be its underlying
real Lie algebroid and $j: TA_R\to TA_R$ the corresponding almost
complex structure.  Then the map
\[\begin{pmatrix}
j&0 \\
0& -j^*
\end{pmatrix}\] defines a \Glanon Lie algebroid structure on $A_R$.
\end{example}

\subsection{Integration theorem}

Now we are ready to state the main theorem of this paper.

\begin{thm}
\label{thm:main}
If $\Gamma$ is a \Glanon groupoid with Lie algebroid $A$, then $A$ is
a \Glanon Lie algebroid.

Conversely, given a \Glanon Lie algebroid $A$, if $\Gamma$ is a
$\s$-connected and $\s$-simply connected Lie groupoid integrating $A$,
then $\Gamma$ is a \Glanon groupoid.
\end{thm}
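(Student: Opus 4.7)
The plan is to exploit systematically the canonical isomorphism $\Sigma_\Gamma\colon \mathsf A(\mathsf P_\Gamma)\to \mathsf P_A$ established in the previous subsection, together with the identification $\mathsf A(\langle\cdot\,,\cdot\rangle_\Gamma)=\langle\cdot\,,\cdot\rangle_A$ under $\Sigma_\Gamma$. The strategy is parallel to the standard proof that a multiplicative tensor on a Lie groupoid descends to a compatible infinitesimal structure on its Lie algebroid, and conversely integrates back when the groupoid is $\s$-simply connected.

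For the forward implication, suppose $\mathcal J\colon\mathsf P_\Gamma\to\mathsf P_\Gamma$ is a Glanon groupoid structure. Applying the Lie functor produces a Lie algebroid morphism $\mathsf A(\mathcal J)\colon\mathsf A(\mathsf P_\Gamma)\to\mathsf A(\mathsf P_\Gamma)$, which, transported through $\Sigma_\Gamma$, defines a Lie algebroid morphism $\mathcal J_A\colon \mathsf P_A\to\mathsf P_A$. The identity $\mathcal J_A^2=-\Id$ follows from the functoriality of $\mathsf A$ applied to $\mathcal J^2=-\Id$, and orthogonality with respect to $\langle\cdot\,,\cdot\rangle_A$ follows from applying $\mathsf A$ to the groupoid morphism identity $\langle\mathcal J\cdot\,,\mathcal J\cdot\rangle_\Gamma=\langle\cdot\,,\cdot\rangle_\Gamma$ (both sides being Lie groupoid morphisms into the additive groupoid $\R$) and invoking the compatibility of pairings under $\Sigma_\Gamma$.

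For the converse, transport $\mathcal J_A$ via $\Sigma_\Gamma\inv$ to a Lie algebroid morphism on $\mathsf A(\mathsf P_\Gamma)$. Since the source fibers of $\mathsf P_\Gamma\rr TM\oplus A^*$ are affine bundles over the source fibers of $\Gamma\rr M$, the hypothesis on $\Gamma$ implies that $\mathsf P_\Gamma$ is itself $\s$-connected and $\s$-simply connected. Lie's second theorem then integrates this morphism to a unique Lie groupoid morphism $\mathcal J\colon\mathsf P_\Gamma\to\mathsf P_\Gamma$. The conditions $\mathcal J^2=-\Id$ and $\langle\mathcal J\cdot,\mathcal J\cdot\rangle_\Gamma=\langle\cdot,\cdot\rangle_\Gamma$ then follow from the uniqueness part of Lie's second theorem, as the two sides of each identity are Lie groupoid morphisms whose infinitesimal versions agree.

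The main obstacle is the Nijenhuis condition, which does not follow from mere functoriality of $\mathsf A$. The plan is to bypass this by working with Dirac structures. Since an almost generalized complex structure $\mathcal J$ is equivalent to a decomposition of $(\mathsf P_\Gamma)_\C$ into a sum $\mathsf D_+\oplus \mathsf D_-$ of complex conjugate Lagrangian subbundles, the vanishing of $\mathcal N_{\mathcal J}$ is equivalent to the involutivity of $\mathsf D_\pm$ under the Courant--Dorfman bracket, i.e.\ to $\mathsf D_\pm$ being complex Dirac structures; when $\mathcal J$ is multiplicative, $\mathsf D_\pm$ are moreover multiplicative, and similarly for $\mathcal J_A$ on $A$. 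The theorem thus reduces to the statement that, under $\Sigma_\Gamma$, multiplicative complex Dirac structures on the groupoid $\Gamma$ correspond bijectively to complex Dirac structures on the Lie algebroid $A$ that are Lie subalgebroids of $\mathsf P_A$; this is precisely the integration result for multiplicative Dirac structures of \cite{Ortiz08t, Jotz12b}, applied here in the complexified setting. Because the entire construction is real, the correspondence respects complex conjugation, so the integrated pair $(\mathsf D_+,\mathsf D_-)$ remains complex conjugate and assembles into the desired multiplicative generalized complex structure $\mathcal J$ on $\Gamma$.
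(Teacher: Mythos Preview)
Your proposal is correct. The handling of the algebraic conditions ($\mathcal J^2=-\Id$, orthogonality, Lie algebroid/groupoid morphism) via functoriality of $\mathsf A$, the pairing identification under $\Sigma_\Gamma$, and Lie's second theorem matches the paper's argument essentially verbatim (the affine-bundle argument for $\s$-simple connectedness of $\mathsf P_\Gamma$ is left implicit in the paper but is the right justification).

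Where you genuinely diverge is in the treatment of the Nijenhuis condition. The paper does \emph{not} appeal to Ortiz's integration theorem for multiplicative Dirac structures; instead it develops direct machinery: it introduces the tangent Courant algebroid on $\mathsf P_{T\Gamma}$, shows that the Nijenhuis tensor satisfies $\T\mathcal N_{\mathcal J}=\mathcal N_{\T\mathcal J}$ (Theorem~\ref{thm_tangent_nijenhuis}), proves that $\mathcal N_{\mathcal J}$ is itself a Lie groupoid morphism when $\mathcal J$ is (Theorem~\ref{nijenhuis_also_mult}), and then establishes the key identity $\oplie(\mathcal N_{\mathcal J})=\mathcal N_{\oplie(\mathcal J)}$ (Theorem~\ref{A_of_nij_and_pairing}) via an $\iota$-relatedness argument between $\mathsf P_A$ and $\mathsf P_{T\Gamma}$. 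Your Dirac-structure route is precisely the ``alternative method'' the paper acknowledges in the Remark immediately following the theorem statement. Your approach is shorter and more conceptual, but it outsources the analytic content to \cite{Ortiz08t} and requires extending that result to the complexified setting (which, as you note, is routine since the constructions are real). The paper's approach is self-contained and yields the intermediate results on tangent Courant algebroids and multiplicativity of Nijenhuis tensors as by-products of independent interest. One small point you leave implicit: you should check that the eigenbundles of $\oplie(\mathcal J)$ are exactly the images under $\Sigma_\Gamma$ of $\mathsf A(\mathsf D_\pm)$, so that the Dirac correspondence of \cite{Ortiz08t} really is comparing the right objects; this follows from the linearity of $\Sigma_\Gamma$ over $A$ but deserves a sentence.
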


\begin{remark}
  Ortiz shows in his thesis \cite{Ortiz08t} that multiplicative Dirac
  structures on a Lie groupoid $\Gamma\rr M$ are in one-one
  correspondence with morphic Dirac structures on its Lie algebroid,
  i.e. Dirac structures $D_A\subseteq \mathsf P_A$ such that $D_A$ is
  a subalgebroid of $\mathsf P_A\to TM\oplus A^*$ over a set
  $U\subseteq TM\oplus A^*$.

  By extending this result to complex Dirac structures and using the
  fact that a multiplicative generalized complex structure on
  $\Gamma\rr M$ is the same as a pair of tranversal, complex
  conjugated, multiplicative Dirac structures in the complexified
  $\mathsf P_\Gamma$, one finds an alternative method for the proof of
  our main theorem.
\end{remark}

Applying this theorem to Example \ref{ex:b} and Example \ref{ex:bb},
we obtain immediately the following

\begin{thm}
  Let $(P, \pi )$ be a Poisson manifold.  If $\Gamma$ is a
  $\s$-connected and $\s$-simply connected Lie groupoid integrating
  the Lie algebroid $(T^*P)_\pi$, then $\Gamma$ automatically admits a
  symplectic groupoid structure.
\end{thm}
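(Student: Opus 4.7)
The plan is to deduce this theorem as a direct corollary of the main integration result (Theorem~3.9) by applying it to the canonical Glanon structure carried by the cotangent Lie algebroid of a Poisson manifold.

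\textbf{Step 1: Produce a Glanon Lie algebroid.} By Example~3.11, the cotangent Lie algebroid $A=(T^*P)_\pi$ carries a canonical Glanon Lie algebroid structure
\[
\J_A = \begin{pmatrix} 0 & -(\omega_A^\flat)^{-1} \\ \omega_A^\flat & 0 \end{pmatrix},
\]
where $\omega_A$ is the canonical symplectic form on the total space $A=T^*P$. In particular the ``diagonal block'' $N_A$ vanishes.

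\textbf{Step 2: Integrate.} Since $\Gamma$ is $\s$-connected and $\s$-simply connected and integrates $A$, Theorem~\ref{thm:main} produces a multiplicative generalized complex structure $\J_\Gamma$ on $\Gamma$ whose infinitesimal is $\J_A$. Write this in block form as in \eqref{J},
\[
\J_\Gamma = \begin{pmatrix} N & \pi^\sharp \\ \omega^\flat & -N^* \end{pmatrix}.
\]

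\textbf{Step 3: The integrated structure is of symplectic type.} By multiplicativity of $\J_\Gamma$ (i.e.\ the fact that $\J_\Gamma$ is a groupoid morphism $\mathsf P_\Gamma\to \mathsf P_\Gamma$ over $TM\oplus A^*$), the individual components $N\colon T\Gamma\to T\Gamma$, $\omega^\flat\colon T\Gamma\to T^*\Gamma$, $\pi^\sharp\colon T^*\Gamma\to T\Gamma$ are each Lie groupoid morphisms, hence correspond to multiplicative tensors on $\Gamma$. Applying the Lie functor and using that the infinitesimal of $\J_\Gamma$ is exactly $\J_A$, the block $N$ has infinitesimal counterpart $N_A=0$. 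Since multiplicative tensors on a source-simply-connected groupoid are uniquely determined by their infinitesimal counterparts, we conclude $N=0$. The relation $\J_\Gamma^2=-\Id$ then forces $\pi^\sharp\omega^\flat = -\Id_{T\Gamma}$ and $\omega^\flat\pi^\sharp = -\Id_{T^*\Gamma}$, so $\omega^\flat$ is invertible and $\pi^\sharp = -(\omega^\flat)^{-1}$. Thus
\[
\J_\Gamma = \begin{pmatrix} 0 & -(\omega_\Gamma^\flat)^{-1} \\ \omega_\Gamma^\flat & 0 \end{pmatrix}
\]
for a non-degenerate multiplicative $2$-form $\omega_\Gamma$ on $\Gamma$.

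\textbf{Step 4: Conclude.} By Example~\ref{ex:b}, a Glanon structure of this form on a Lie groupoid is equivalent to the statement that $(\Gamma,\omega_\Gamma)$ is a symplectic groupoid. This proves the theorem.

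The main obstacle is Step~3, namely extracting from the single multiplicative object $\J_\Gamma$ the individual multiplicativity (and hence the individual integration uniqueness) of the blocks $N$, $\pi^\sharp$, $\omega^\flat$. Once one knows that these blocks are separately multiplicative and that the Lie functor applied to $\J_\Gamma$ is block-wise $\J_A$, the vanishing of $N$ on $\Gamma$ follows from the fact that source-simply-connectedness gives uniqueness of integrating multiplicative tensors.
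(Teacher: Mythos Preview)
Your approach matches the paper's exactly: the paper derives this theorem in a single sentence by ``applying [Theorem~\ref{thm:main}] to Example~\ref{ex:b} and Example~\ref{ex:bb}''. The content of your Step~3---showing that the integrated Glanon structure has vanishing diagonal block $N$---is left implicit by the paper but is correctly supplied in your argument (the blocks are separately multiplicative as noted right after the definition of Glanon groupoid, $\oplie$ acts blockwise because $\Sigma_\Gamma=(\sigma_\Gamma,\varsigma_\Gamma)$ is block-diagonal, and Lie~II for the source-simply-connected tangent groupoid $T\Gamma\rr TM$ forces $N=0$).
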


Similarly, applying Theorem \ref{thm:main} to Examples \ref{ex:c} and
\ref{ex:cc}, we obtain immediately the following

\begin{thm}
  If $\Gamma$ is a $\s$-connected and $\s$-simply connected Lie
  groupoid integrating the underlying real Lie algebroid $A_R$ of a
  holomorphic Lie algebroid $A$, then $\Gamma$ is a holomorphic Lie
  groupoid.
\end{thm}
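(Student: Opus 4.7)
The plan is to reduce this theorem to an application of Theorem~\ref{thm:main}, specialized to the Glanon structure of Example~\ref{ex:cc}.

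First I would endow $A_R$ with the Glanon Lie algebroid structure of Example~\ref{ex:cc}, namely $\mathcal{J}_A=\begin{pmatrix} j & 0 \\ 0 & -j^*\end{pmatrix}$, where $j:TA_R\to TA_R$ is the canonical almost complex structure coming from the holomorphic structure on $A$. Since $\Gamma$ is $\s$-connected and $\s$-simply connected with Lie algebroid $A_R$, Theorem~\ref{thm:main} yields a unique Glanon groupoid structure $\mathcal{J}$ on $\Gamma$ whose infinitesimal is $\mathcal{J}_A$.

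Next I would argue that $\mathcal{J}$ has the block form of Example~\ref{ex:c}. Writing $\mathcal{J}=\begin{pmatrix}N & \pi^\sharp\\ \omega^\flat & -N^*\end{pmatrix}$, the paper already observes that multiplicativity of $\mathcal{J}$ is equivalent to each of $N$, $\pi^\sharp$, $\omega^\flat$, $N^*$ being a Lie groupoid morphism (with respect to the tangent and cotangent groupoid structures on $T\Gamma$ and $T^*\Gamma$ respectively). Applying the Lie functor block by block, and using the canonical identification $\mathsf A(\mathsf P_\Gamma)\cong\mathsf P_A$, yields the four blocks of $\mathcal{J}_A$. Since the off-diagonal blocks of $\mathcal{J}_A$ vanish, uniqueness of integration of Lie algebroid morphisms to $\s$-simply connected Lie groupoids forces $\pi^\sharp=0$ and $\omega^\flat=0$ on $\Gamma$, while the diagonal block $N$ is the unique multiplicative integration of $j$. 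In particular $J_\Gamma:=N$ satisfies $J_\Gamma^2=-\Id$ because this identity holds at the algebroid level and is preserved under integration of groupoid morphisms.

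Finally, integrability is automatic: with $\pi=0$ and $\omega=0$, the generalized Nijenhuis tensor $\mathcal{N}_{\mathcal{J}}$ collapses to (a multiple of) the classical Nijenhuis tensor of $J_\Gamma$, which must therefore vanish since $\mathcal{J}$ is a genuine generalized complex structure. Hence $J_\Gamma$ is an integrable almost complex structure on $\Gamma$, and being multiplicative it equips $\Gamma$ with the structure of a holomorphic Lie groupoid. The main obstacle in this scheme is the block-by-block uniqueness step: one must verify that the inclusions and projections relating $T\Gamma$, $T^*\Gamma$ and $\mathsf P_\Gamma$ are groupoid morphisms and are intertwined (via $\Sigma_\Gamma$) with the corresponding maps at the level of $\mathsf P_A$, so that uniqueness of integration can indeed be applied separately to each of the four blocks of $\mathcal{J}$.
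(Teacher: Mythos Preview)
Your proposal is correct and is exactly the paper's approach: the theorem is presented there as an immediate consequence of Theorem~\ref{thm:main} applied to Examples~\ref{ex:c} and~\ref{ex:cc}, and your argument simply supplies the detail (left implicit in the paper) that the integrated Glanon structure must be block-diagonal. The obstacle you flag dissolves because $T\Gamma\rr TM$ and $T^*\Gamma\rr A^*$, being VB-groupoids over the $\s$-simply connected $\Gamma$, have source fibers that are affine bundles over the source fibers of $\Gamma$ and hence are themselves simply connected, so uniqueness of integration applies to each block separately.
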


A \Glanon groupoid is automatically a Poisson groupoid, while a
\Glanon Lie algebroid must be a Lie bialgebroid. The following result
reveals their connection

\begin{thm}
  Let $\Gamma$ be a \Glanon groupoid with its \Glanon Lie algebroid
  $A$, $(\Gamma, \pi)$ and $(A, A^*)$ their induced Poisson groupoid
  and Lie bialgebroid respectively. Then the corresponding Lie
  bialgebroid of $(\Gamma, \pi)$ is isomorphic to $(A, A^*)$.
\end{thm}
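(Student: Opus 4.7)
The plan is to reduce both Lie bialgebroid structures on $A$ in question to one and the same linear Poisson structure on the total space of $A$. Write the generalized complex structure as
\[\mathcal{J}=\begin{pmatrix} N & \pi^\sharp \\ \omega^\flat & -N^* \end{pmatrix}.\]
Because $\mathcal{J}$ is a morphism of Pontryagin groupoids, each of its four blocks is a Lie groupoid morphism, as noted right after the definition of Glanon groupoid. In particular $\pi^\sharp:T^*\Gamma\to T\Gamma$ is a Lie groupoid morphism, which is precisely the Mackenzie--Xu characterization of $(\Gamma,\pi)$ as a Poisson groupoid. The Lie bialgebroid of this Poisson groupoid has its Lie algebroid structure on $A^*$ determined by the linear Poisson bivector $\tilde\pi_A$ on $A$ whose sharp map $T^*A\to TA$ is obtained by applying the Lie functor to $\pi^\sharp$ and transporting the result via the canonical identifications $\varsigma_\Gamma:\mathsf{A}(T^*\Gamma)\to T^*A$ and $\sigma_\Gamma:\mathsf{A}(T\Gamma)\to TA$.

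On the other hand, by the infinitesimal--global correspondence of Theorem~\ref{thm:main}, the Glanon Lie algebroid structure $\mathcal{J}_A$ on $A$ is obtained from $\mathcal{J}$ by applying the Lie functor and conjugating by $\Sigma_\Gamma=(\sigma_\Gamma,\varsigma_\Gamma):\mathsf{A}(\mathsf{P}_\Gamma)\to \mathsf{P}_A$. Since $\Sigma_\Gamma$ is diagonal with respect to the two direct-sum decompositions, applying this equality block by block identifies the $(1,2)$ component $\pi_A^\sharp:T^*A\to TA$ of $\mathcal{J}_A$ with $\mathsf{A}(\pi^\sharp)$ after the same identifications. Hence the Poisson bivector $\pi_A$ on $A$ coming from the Glanon Lie algebroid coincides with $\tilde\pi_A$.

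It follows that the two Lie algebroid structures on $A^*$ to be compared are both obtained by dualizing the same linear Poisson bivector $\pi_A=\tilde\pi_A$ on $A$: one via the Mackenzie--Xu construction applied to $(\Gamma,\pi)$, the other via the construction behind the proposition that asserts $(A,A^*)$ is a Lie bialgebroid in the Glanon setting. Since the Lie algebroid structure on $A$ is unchanged in both pictures, the identity map on $A$ yields an isomorphism of Lie bialgebroids.

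The main obstacle is the bookkeeping in the second paragraph: one must verify that the block decomposition of $\mathcal{J}$ is compatible, under the Lie functor, with the block decomposition of $\mathcal{J}_A$. Concretely, the Lie functor applied to the off-diagonal block $\pi^\sharp$ of $\mathcal{J}$ should yield the off-diagonal block $\pi_A^\sharp$ of $\mathcal{J}_A$, which amounts to verifying that $\Sigma_\Gamma$ respects the direct-sum decomposition, sending $\mathsf{A}(T\Gamma)\oplus\mathsf{A}(T^*\Gamma)$ to $TA\oplus T^*A$ in the expected way. This is essentially an extension of Lemma~\ref{relation_Sigma_SigmaGamma}, combined with the standard identification of $\mathsf{A}(\pi^\sharp)$ with a linear Poisson sharp map from Mackenzie--Xu Poisson-groupoid theory.
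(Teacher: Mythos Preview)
Your argument is correct and is precisely the one the paper's machinery is set up to deliver. The paper does not spell out a proof of this theorem in the main text, but the mechanism you invoke is exactly what is carried out in the special case of Proposition~\ref{prop:4.16}(b): there the $(1,2)$-block of $\mathfrak{A}(\mathcal{J})$ is computed as $\sigma_\Gamma\circ\mathsf{A}(\pi^\sharp)\circ\varsigma_\Gamma^{-1}$ and identified, via \cite{MaXu00}, with the linear Poisson structure on $A$ encoding the Lie bialgebroid of the Poisson groupoid.

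One small comment on your ``main obstacle'': the compatibility you worry about is immediate from the definitions and does not require Lemma~\ref{relation_Sigma_SigmaGamma}. By construction $\Sigma_\Gamma=(\sigma_\Gamma,\varsigma_\Gamma)$ is block-diagonal with respect to the decompositions $\mathsf{A}(\mathsf{P}_\Gamma)=\mathsf{A}(T\Gamma)\times_A\mathsf{A}(T^*\Gamma)$ and $\mathsf{P}_A=TA\times_A T^*A$, and the Lie functor preserves fiber products, so $\mathsf{A}(\mathcal{J})$ inherits the block form of $\mathcal{J}$. Conjugating block by block gives the $(1,2)$-entry of $\mathfrak{A}(\mathcal{J})$ as $\sigma_\Gamma\circ\mathsf{A}(\pi^\sharp)\circ\varsigma_\Gamma^{-1}$ on the nose, which is the Mackenzie--Xu linear Poisson sharp map. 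There is no additional bookkeeping needed.
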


\subsection{Tangent Courant algebroid}

In \cite{BoZa09}, Boumaiza-Zaalani proved that the tangent bundle of a
Courant algebroid is naturally a Courant algebroid.  In this section,
we study the Courant algebroid structure on $\mathsf P_{TM}$ in terms
of the isomorphism $\Sigma:T\mathsf P_M\to \mathsf P_{TM}$ defined in
\eqref{def_of_Sigma}.

We need to introduce first some notations.

\begin{defn}
\begin{enumerate}
\item The map $\T:\mx(M)\to\mx(TM)$ sends
$V\in\mx(M)$ to $\T V=\sigma(TV)\in\mx(TM)$.
\item The map $\T:\Omega^1(M)\to\Omega^1(TM)$ sends
  $\alpha\in\Omega^1(M)$ to $\T
  \alpha=\varsigma(T\alpha)\in\Omega^1(TM)$.
\item The map $\T:C^\infty(M)\to C^\infty(TM)$
is given by 
$$\T f:=\pr_2\circ T f\in C^\infty(TM,\R),$$
for all functions $f\in C^\infty(M)$.
\end{enumerate}
\end{defn}

Note that if $v_m=\dot c(0)\in T_mM$, 
then 
\begin{equation}\label{def_of_Tf}
  \T f(v_m)=\T f(\dot c(0))=\left.\frac{d}{dt}\right\an{t=0}\left(f\circ c(t)\right),
\end{equation} 
that is, 
\[\T f=\dr f:TM\to \R.\]

Now introduce  the map
$\T:\Gamma(\mathsf P_ M)\to \Gamma(\mathsf P_{TM})$
 given by
$$\T(V,\alpha)=\Sigma(TV, T\alpha)=(\T V, \T\alpha)$$
for all $(V,\alpha)\in\Gamma(\mathsf P_ M)$.

The main result of this section is the following:

\begin{prop}\label{prop_Courant_dorfman}
  For any $e_1,e_2\in\Gamma(\mathsf P_{M})$, we have
\begin{align}
  \left\llbracket\T e_1, \T e_2\right\rrbracket
  &=\T\left\llbracket e_1, e_2\right\rrbracket\label{Courant_bracket_of_lifts}\\
  \langle \T e_1, \T e_2\rangle_{TM} &=\T \left(\langle e_1,
    e_2\rangle_M\right).\label{pairing_of_lifts}
\end{align}
\end{prop}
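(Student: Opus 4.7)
The plan is to reduce both identities to a short list of compatibility relations for the tangent-lift operator $\T$ on functions, vector fields and one-forms, and then match components.

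I first establish four building blocks: (i) $\T V$ is the classical complete lift of $V$ to $TM$, in particular $\T V(p_M^*f) = p_M^*(Vf)$ and $\T V(\T f) = \T(Vf)$ for all $f \in C^\infty(M)$; (ii) $[\T V, \T W] = \T[V, W]$ in $\mx(TM)$; (iii) $(\T\alpha)(\T V) = \T(\alpha(V))$ as functions on $TM$; and (iv) $d(\T f) = \T(df)$ in $\Omega^1(TM)$, where the right-hand side denotes the lift of the one-form $df \in \Omega^1(M)$. Property (i) is immediate from $\T V = \sigma(TV)$ and the definition of the canonical involution; (ii) is a standard consequence of (i); (iii) is obtained by unwinding $\varsigma = \sigma^*\circ I$ via the defining relation \eqref{def_of_I}; and (iv) is checked in coordinates $(x^i,\dot x^i)$ on $TM$, where $\T V = V^i\partial_{x^i} + \dot x^j(\partial_j V^i)\partial_{\dot x^i}$, $\T\alpha = \alpha_i\,d\dot x^i + \dot x^j(\partial_j\alpha_i)\,dx^i$ and $\T f = \dot x^j\partial_j f$, so that both sides of (iv) equal $\partial_i f\,d\dot x^i + \dot x^j\partial_j\partial_i f\,dx^i$.

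The pairing identity \eqref{pairing_of_lifts} then falls out immediately. Writing $e_k = V_k + \alpha_k$ and applying (iii) twice,
\[
\langle \T e_1, \T e_2\rangle_{TM} = (\T\alpha_1)(\T V_2) + (\T\alpha_2)(\T V_1) = \T\bigl(\alpha_1(V_2) + \alpha_2(V_1)\bigr) = \T\langle e_1, e_2\rangle_M.
\]

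For the bracket identity \eqref{Courant_bracket_of_lifts}, I decompose the Dorfman bracket \eqref{Courant_bracket} as $\llbracket V_1+\alpha_1, V_2+\alpha_2\rrbracket = [V_1,V_2] + \mathcal L_{V_1}\alpha_2 - \iota_{V_2}d\alpha_1$ and treat the two components separately. The vector-field component is exactly (ii). For the one-form component I must show
\[
\mathcal L_{\T V_1}\T\alpha_2 - \iota_{\T V_2} d(\T\alpha_1) = \T(\mathcal L_{V_1}\alpha_2 - \iota_{V_2}d\alpha_1)
\]
in $\Omega^1(TM)$. I test both sides against an arbitrary complete lift $\T W$ and against a vertical lift of a vector field; since these together span $T(TM)$ pointwise, such testing suffices. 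On complete lifts, (iii) turns every contraction into $\T$ of a function, and Cartan's magic formula combined with (i), (iv), and the product rule $\T(fg) = p_M^*f\cdot\T g + \T f\cdot p_M^*g$ closes the identity; on vertical lifts the analogous but simpler calculation produces the remaining equality.

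The main obstacle is identity (iv), the intrinsic commutation of $d$ and $\T$ on functions; it is there that the bracket-preserving nature of $\T$ genuinely enters. Once (iv) is in hand, the remaining steps are routine Cartan calculus and bookkeeping.
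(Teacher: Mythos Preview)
Your approach is essentially the paper's: establish that $\T$ respects the pairing (your (iii)), Lie brackets of vector fields (your (ii)), and the one-form component $\ldr{V}\beta - \ip{W}\dr\alpha$ of the Dorfman bracket, then assemble. Two small tactical differences are worth flagging. First, the paper tests the one-form identity only against complete lifts $\T U$, observing that a one-form annihilating every $\T U$ vanishes on the open dense complement of the zero section and hence everywhere by continuity; this replaces your separate vertical-lift check. Second, and more to the point given your closing remark, the paper never isolates your identity (iv): when pairing with $\T U$ it simply expands $(\ldr{X}\gamma)(Y) = X(\gamma(Y)) - \gamma([X,Y])$ and $(\dr\gamma)(X,Y) = X(\gamma(Y)) - Y(\gamma(X)) - \gamma([X,Y])$ intrinsically, after which (i), (ii), (iii) alone close the computation. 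So what you called the ``main obstacle'' $d(\T f) = \T(df)$ is in fact bypassable, and neither Cartan's magic formula nor the product rule for $\T$ is needed. Your route is correct, just slightly longer than necessary.
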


The following results show that $\T f\in C^\infty(TM)$, $\T
V\in\mx(TM)$ and $\T\alpha\in\Omega^1(TM)$ are the \emph{complete
  lifts} of $f\in C^\infty(M)$, $V\in\mx(M)$ and
$\alpha\in\Omega^1(M)$ in the sense of \cite{YaIs73}.

\begin{lem}
For all $f\in C^\infty(M)$ and $V\in\mx(M)$, we have
\begin{equation}\label{TV_of_Tf}
\T V(\T f)=\T(V(f)).
\end{equation} 
\end{lem}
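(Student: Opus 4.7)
The plan is to unpack both sides of \eqref{TV_of_Tf} as iterated partial derivatives of a function of two real variables and then apply Schwarz's theorem. Fix $v_m = \dot c(0) \in T_mM$, where $c$ is a smooth curve in $M$ with $c(0)=m$. By \eqref{def_of_Tf} applied to $V(f)\in C^\infty(M)$, the right-hand side of \eqref{TV_of_Tf} at $v_m$ reads
$$\T(V(f))(v_m) = \frac{d}{dt}\bigg|_{t=0}(Vf)(c(t)) = \frac{d}{dt}\bigg|_{t=0}\frac{d}{ds}\bigg|_{s=0}f(\Phi^V_s(c(t))),$$
where $\Phi^V$ denotes the local flow of $V$.

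The decisive step is to give a comparably explicit form to the left-hand side. I would identify $\T V(v_m)=\sigma(TV(v_m))$ via the defining property of the canonical involution $\sigma\colon TTM\to TTM$: for any smooth $\alpha\colon(-\varepsilon,\varepsilon)^2\to M$,
$$\sigma\left(\frac{\partial^2\alpha}{\partial t\,\partial s}(0,0)\right) = \frac{\partial^2\alpha}{\partial s\,\partial t}(0,0).$$
Taking $\alpha(s,t)=\Phi^V_s(c(t))$, one has $\frac{\partial\alpha}{\partial s}(0,t)=V(c(t))$, so
$$\frac{\partial^2\alpha}{\partial t\,\partial s}(0,0)=\frac{d}{dt}\bigg|_{t=0}V(c(t))=TV(v_m).$$
Therefore
$$\T V(v_m) = \frac{d}{ds}\bigg|_{s=0}\frac{\partial\alpha}{\partial t}(s,0) = \frac{d}{ds}\bigg|_{s=0}T\Phi^V_s(v_m),$$
so $\T V$ is the infinitesimal generator on $TM$ of the tangent-lifted flow $T\Phi^V_s$, which is one of the standard descriptions of the complete lift in \cite{YaIs73}.

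Combined with the identity $\T f(w)=w(f)$ coming from \eqref{def_of_Tf}, this gives
$$\T V(v_m)(\T f) = \frac{d}{ds}\bigg|_{s=0}\T f(T\Phi^V_s(v_m)) = \frac{d}{ds}\bigg|_{s=0}T\Phi^V_s(v_m)(f) = \frac{d}{ds}\bigg|_{s=0}\frac{d}{dt}\bigg|_{t=0}f(\Phi^V_s(c(t))),$$
and this agrees with the expression for $\T(V(f))(v_m)$ displayed above once the order of differentiation is interchanged, which is justified by Schwarz's theorem. The one point requiring care is the identification of $\T V$ with the flow lift through the canonical involution; everything else is bookkeeping.
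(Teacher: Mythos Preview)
Your proof is correct and follows essentially the same route as the paper: both use the flow $\Phi^V$ of $V$ and the curve $c$ to represent $\T V(v_m)=\sigma(TV(v_m))$ as the swapped second derivative $\left.\frac{d}{ds}\right|_{s=0}\left.\frac{d}{dt}\right|_{t=0}\Phi^V_s(c(t))$, then apply $\T f=\dr f$ and interchange the order of differentiation. Your version simply makes explicit the identification of $\T V$ with the generator of the tangent-lifted flow, which the paper leaves implicit in its chain of equalities.
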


\begin{proof}
  Let $\phi$ be the flow of $V$. For any $v_m=\dot c(0)\in
  T_mM$, we have
\begin{align*}
  \left(\T V(\T f)\right)(v_m)&= (\T V)(v_m)(\T f)
  =\sigma\left(\left.\frac{d}{dt}\right\an{t=0}\left.\frac{d}{ds}\right\an{s=0}\phi_s(c(t))\right)
  \T f\\
  &=\left(\left.\frac{d}{ds}\right\an{s=0}\left.\frac{d}{dt}\right\an{t=0}\phi_s(c(t))\right)(\T f)\\
  &=\left(\left.\frac{d}{ds}\right\an{s=0}\T f\left(\left.\frac{d}{dt}\right\an{t=0}\phi_s(c(t))\right)\right)\\
  &\overset{\eqref{def_of_Tf}}=\left.\frac{d}{ds}\right\an{s=0}\left.\frac{d}{dt}\right\an{t=0}\left(f\circ\phi_s\right)(c(t))\\
  &=\left.\frac{d}{dt}\right\an{t=0}\left.\frac{d}{ds}\right\an{s=0}\left(f\circ\phi_s\right)(c(t))\\
  &=\left.\frac{d}{dt}\right\an{t=0}V(f)(c(t))\overset{\eqref{def_of_Tf}}=\T(V(f))(v_m).
\end{align*}
\end{proof}

 The following lemma characterizes the sections $\T\alpha$ of $\Omega^1(TM)$.
 \begin{lem}\label{car_of_T_one_forms}
\begin{enumerate}
\item  For all $\alpha\in\Omega^1(M)$ and $V\in\mx(M)$, we have
\begin{align}
  \lbag\T V, \T\alpha\rbag_{TTM}= \T\left(\lbag V,
    \alpha\rbag_{TM}\right).\label{caracterization_of_Theta_T}
\end{align} 
\item $\xi\in\Omega^1(TM)$ satisfies
\begin{align*}
\lbag\T V, \xi\rbag_{T\Gamma}&=0
\end{align*}
for all
$V\in\mx(M)$ if and only if
$\xi=0$.
\end{enumerate}
\end{lem}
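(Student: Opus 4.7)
The two statements are essentially independent, so I would treat them separately.

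For part (I), the strategy is to pass through the canonical pairing $\langle\langle\cdot,\cdot\rangle\rangle_{TM}$ of \S\ref{canonical_id_subsec}. Specializing \eqref{eq1} to $q_A=p_M:TM\to M$, $X=V\in\mx(M)$ and $\xi=\alpha\in\Omega^1(M)$ yields, at $v_m=\dot c(0)\in T_mM$,
\begin{equation*}
\langle\langle TV(v_m),T\alpha(v_m)\rangle\rangle_{TM}
=\pr_2\bigl(T_m(\alpha(V))(v_m)\bigr)
=\T\bigl(\lbag V,\alpha\rbag_{TM}\bigr)(v_m),
\end{equation*}
using \eqref{def_of_Tf} in the last step. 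It then suffices to identify the left-hand side with $\lbag\T V(v_m),\T\alpha(v_m)\rbag_{TTM}$. Unwinding $\T V=\sigma(TV)$ and $\T\alpha=\varsigma(T\alpha)=\sigma^{*}\circ I(T\alpha)$, the defining property \eqref{def_of_I} of $I$ together with the fact that $\sigma$ is an involution gives
\begin{equation*}
\lbag\T V(v_m),\T\alpha(v_m)\rbag_{TTM}
=\lbag \sigma(TV(v_m)),\sigma^{*}I(T\alpha(v_m))\rbag_{TTM}
=\lbag TV(v_m),I(T\alpha(v_m))\rbag_{TTM\to TM}
=\langle\langle TV(v_m),T\alpha(v_m)\rangle\rangle_{TM},
\end{equation*}
where the middle equality uses the usual adjunction between $\sigma$ and $\sigma^{*}$ under the two vector bundle structures on $TTM$. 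Combining the two displays yields \eqref{caracterization_of_Theta_T}.

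For part (II), I would argue locally. In coordinates $(x^i,y^i)$ on $TM$ the formula $\T V=\sigma(TV)$ gives, for $V=V^i(x)\partial_{x^i}$,
\begin{equation*}
\T V=V^i(x)\,\partial_{x^i}+y^j\partial_{j}V^i(x)\,\partial_{y^i},
\end{equation*}
and any $\xi\in\Omega^1(TM)$ writes as $\xi=\xi_i(x,y)\,dx^i+\eta_i(x,y)\,dy^i$. The hypothesis $\lbag\T V,\xi\rbag_{TTM}=0$ for all $V\in\mx(M)$ then reads
\begin{equation*}
V^i(x)\xi_i(x,y)+y^j\partial_{j}V^i(x)\,\eta_i(x,y)=0.
\end{equation*}
Fix a point $(x_0,y_0)$. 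Choosing $V$ with $V(x_0)$ arbitrary and $DV(x_0)=0$ forces $\xi_i(x_0,y_0)=0$. Then choosing $V$ with $V(x_0)=0$ and $DV(x_0)$ an arbitrary linear endomorphism forces $y_0^{j}\eta_i(x_0,y_0)=0$ for all $i,j$, so $\eta_i(x_0,y_0)=0$ whenever $y_0\neq 0$. Smoothness of $\xi$ and density of $\{y\neq 0\}$ finally give $\eta_i(x_0,0)=0$, completing the proof.

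\textbf{Main obstacle.} Part (I) is a definition chase, but requires careful bookkeeping of the two vector bundle structures on $TTM$ and of their duals $T^{*}(TM)$ and $(TTM)^{\vee}$, since $I$ lands in the latter and $\varsigma$ in the former. The genuinely delicate point is in part (II): the complete lifts $\T V$ degenerate along the zero section of $TM$, spanning only horizontal directions there. Hence one cannot conclude $\xi=0$ by a purely pointwise argument and must rely on smoothness of $\xi$ to extend the vanishing from $\{y\neq 0\}$ to the zero section.
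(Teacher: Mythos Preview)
Your proposal is correct and follows essentially the same route as the paper. Part (I) is literally the paper's computation: unwind $\T V=\sigma(TV)$ and $\T\alpha=\sigma^*I(T\alpha)$, use $\sigma^2=\Id$ (which is what underlies your ``adjunction''), invoke \eqref{def_of_I}, and finish with \eqref{eq1}. For part (II) the paper argues coordinate-free---at any nonzero $u\in TM$ the values $\T V(u)$ span $T_u(TM)$, hence $\xi_u=0$, and continuity handles the zero section---while you unpack exactly this spanning statement in coordinates; the key idea and the use of continuity at the zero section are identical.
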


\begin{proof}
\begin{enumerate}
\item Choose $V\in\mx(M)$. Then, using $\sigma^2=\Id_{TTM}$:
\begin{align*}
  \lbag\T V, \T\alpha\rbag_{p_{TM}}
 &=\lbag \sigma(T V), (\sigma^*\circ I)(T\alpha)\rbag_{p_{TM}} \\
&= \lbag TV, I(T\alpha)\rbag_{Tp_M}\\
&=\langle\langle TV, T\alpha\rangle\rangle_{TM}\overset{\eqref{eq1}}=
T\lbag V, \alpha\rbag_{TM}.
\end{align*} 
\item Let $\xi\in\Omega^1(TM)$ be such that $\xi(\T V)=0$ for all
  $V\in\mx(M)$.  For any $u\in TM$ with $u\neq 0$ and $v\in T_u(TM)$,
  there exists a vector field $V\in\mx(M)$ such that $\T V(u)=v$. This
  yields $\xi(v)=0$. Therefore, $\xi$ vanishes at all points of $TM$ except
  for the zero section of $TM$. By continuity, we get $\xi=0$.
\end{enumerate}
\end{proof}

Using this, we can show the following formulas.
\begin{lem}\label{tangent_and_cotangent_of_courant}
\begin{enumerate}
\item For all $V,W\in\mx(M)$, we have
$$[\T V, \T W]=\T[V,W].$$
\item For any $\alpha,\beta\in\Omega^1(M)$ and $V,W\in\mx(M)$, we have
\begin{align*}
\ldr{\T V}\T\beta-\ip{\T W}\dr \T\alpha&=
\T\left(\ldr{V}\beta-\ip{W}\alpha\right).
\end{align*}
\end{enumerate}
\end{lem}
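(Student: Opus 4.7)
The plan is to treat the two statements as essentially independent, with part (1) providing the technical input for part (2). Throughout, Lemma~\ref{car_of_T_one_forms}(2) is the main recognition tool: to prove equality of two $1$-forms on $TM$ it suffices to check that their pairings with $\T X$ agree for every $X\in\mx(M)$.

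For part (1), I would verify $[\T V,\T W]=\T[V,W]$ by testing both vector fields against a generating family of functions on $TM$. Locally, $C^\infty(TM)$ is generated by the pullbacks $p_M^*f$ and the complete lifts $\T f$ for $f\in C^\infty(M)$, because in induced coordinates $(x^i,\dot x^i)$ on $TM$ one has $x^i=p_M^*x^i$ and $\dot x^i=\T(x^i)$. On functions of the form $\T f$, equation \eqref{TV_of_Tf} gives $\T V(\T W(\T f))=\T(V(Wf))$, so $[\T V,\T W](\T f)=\T([V,W](f))=\T[V,W](\T f)$; on pullbacks $p_M^*f$, a short computation (or the flow picture: $\T V$ generates the tangent flow $T\phi_t$ of the flow $\phi_t$ of $V$, via the defining formula $\T V(v_m)=\sigma(TV(v_m))=\tfrac{d}{dt}\an{t=0}T\phi_t(v_m)$) yields $\T V(p_M^*f)=p_M^*(V(f))$, and the same telescoping argument concludes. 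The slick alternative is to skip the coordinate step entirely: since $T$ is a functor, $T(\phi_t\circ\psi_s)=T\phi_t\circ T\psi_s$, and the bracket identity is the infinitesimal consequence.

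For part (2), I would split the claimed equality into the two separate identities $\ldr{\T V}\T\beta=\T(\ldr V\beta)$ and $\ip{\T W}\dr\T\alpha=\T(\ip W\dr\alpha)$, which together imply the combined statement (I read ``$\ip{W}\alpha$'' on the right-hand side as $\ip{W}\dr\alpha$). For the first identity, pair both sides with $\T X$ for arbitrary $X\in\mx(M)$ and apply the Leibniz rule for the Lie derivative:
\[
\lbag\T X,\ldr{\T V}\T\beta\rbag_{TTM}
=\ldr{\T V}\lbag\T X,\T\beta\rbag_{TTM}-\lbag[\T V,\T X],\T\beta\rbag_{TTM}.
\]
Using part (1) on the commutator, Lemma~\ref{car_of_T_one_forms}(1) on each pairing, and \eqref{TV_of_Tf}, the right-hand side collapses to $\T(\ldr V\lbag X,\beta\rbag_{TM}-\lbag[V,X],\beta\rbag_{TM})=\T(\lbag X,\ldr V\beta\rbag_{TM})=\lbag\T X,\T(\ldr V\beta)\rbag_{TTM}$, and Lemma~\ref{car_of_T_one_forms}(2) finishes the argument.

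For the second identity, pair again with $\T V$ and unfold via Koszul's formula:
\[
\lbag\T V,\ip{\T W}\dr\T\alpha\rbag_{TTM}
=\T W(\lbag\T V,\T\alpha\rbag_{TTM})
-\T V(\lbag\T W,\T\alpha\rbag_{TTM})
-\lbag[\T W,\T V],\T\alpha\rbag_{TTM}.
\]
Applying part (1), Lemma~\ref{car_of_T_one_forms}(1), and \eqref{TV_of_Tf}, each summand on the right is of the form $\T(\cdots)$, and the whole expression reduces to $\T(\dr\alpha(W,V))=\T(\lbag V,\ip W\dr\alpha\rbag_{TM})$, which by Lemma~\ref{car_of_T_one_forms}(1) equals $\lbag\T V,\T(\ip W\dr\alpha)\rbag_{TTM}$. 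Lemma~\ref{car_of_T_one_forms}(2) again concludes. The main bookkeeping hurdle is just keeping the identifications under $\sigma$ and $\varsigma$ straight; once both identities from Lemma~\ref{car_of_T_one_forms} are in hand, the proof is a disciplined application of Cartan calculus on $TM$ followed by a pointwise reduction to the base $M$.
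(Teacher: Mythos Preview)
Your proposal is correct and follows essentially the same approach as the paper: for part~(1) the paper also invokes the fact that $T\phi_t$ is the flow of $\T V$ (your ``slick alternative''), and for part~(2) both arguments pair the claimed identity with $\T U$ (your $\T X$), expand via Cartan calculus, apply part~(1), Lemma~\ref{car_of_T_one_forms}(1), and~\eqref{TV_of_Tf}, then conclude with Lemma~\ref{car_of_T_one_forms}(2). The only cosmetic difference is that you split part~(2) into the two separate identities $\ldr{\T V}\T\beta=\T(\ldr V\beta)$ and $\ip{\T W}\dr\T\alpha=\T(\ip W\dr\alpha)$, whereas the paper treats the combined expression $\ldr{\T V}\T\beta-\ip{\T W}\dr\T\alpha$ in a single computation; you also correctly read the right-hand side as $\ip W\dr\alpha$, which is indeed what the paper's own proof uses.
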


\begin{proof}\begin{enumerate}
  \item This is an easy computation, using the fact that if $\phi$ is
    the flow of the vector field $V$, then $T\phi$ is the flow of $\T
    V$ (alternatively, see \cite{Mackenzie05}).
\item For any $U\in\mx(M)$, we can compute 
\begin{align*}
  \lbag \T U, \ldr{\T V}\T\beta-\ip{\T W}\dr \T\alpha\rbag_{p_{TM}} 
=\,&\T V\langle\T\beta, \T U\rangle
  -\langle\T\beta, \T [V,U]\rangle
  -\T W\langle\T\alpha, \T U\rangle\\
  &+\T U\langle\T\alpha, \T W\rangle
  +\langle\T\alpha, \T [W,U]\rangle\\
  =\,&\T V \left(\T\langle\beta, U\rangle\right) -\T\langle\beta,
  [V,U]\rangle
  -\T W\left(\T\langle\alpha, U\rangle\right)\\
  &+\T U\left(\T\langle\alpha, W\rangle\right)
  +\T\langle\alpha, [W,U]\rangle\\
  =\,&\T\left(V \langle\beta, U\rangle\right) -\T\langle\beta,
  [V,U]\rangle
  -\T\left(W\langle\alpha, U\rangle\right)\\
  &+\T\left(U\langle\alpha, W\rangle\right)
  +\T\langle\alpha, [W,U]\rangle\\
  =\,&\T\lbag U, \ldr{V}\beta-\ip{W}\dr \alpha\rbag_{p_M}\\
=\,&\lbag \T U, \T(\ldr{V}\beta-\ip{W}\dr \alpha)\rbag_{p_{TM}}.
\end{align*}
We get 
$$\lbag \T U, \T(\ldr{V}\beta-\ip{W}\dr \alpha)-(\ldr{\T V}\T\beta-\ip{\T W}\dr \T\alpha)\rbag_{p_{TM}}=0$$ for all $U\in\mx(M)$ and we can conclude using Lemma
\ref{car_of_T_one_forms}.\qedhere
\end{enumerate}
\end{proof}

\begin{proof}[Proof of Proposition \ref{prop_Courant_dorfman}]
  Equation \eqref{pairing_of_lifts} follows immediately from
  \eqref{caracterization_of_Theta_T}.

 Formula \eqref{Courant_bracket_of_lifts} for the Dorfman bracket
  on sections of $\mathsf P_{TM}=T(TM)\oplus T^*(TM)$ follows from
  Lemma \ref{tangent_and_cotangent_of_courant}.  (Note that we can
  show in the same manner that the Courant bracket is compatible with
  $\T$.)
\end{proof}

\subsection{Nijenhuis tensor}
Now let $\J:\mathsf P_ M\to \mathsf P_ M$ be a vector bundle
morphism over the identity. Consider the map
$$T\J:T\mathsf P_ M\to T\mathsf P_ M,$$ 
and the map  $\T\J$ defined by
the commutative diagram
\begin{displaymath}\begin{xy}
\xymatrix{ 
T\mathsf P_ M\ar[r]^{T\J}\ar[d]_{\Sigma}&T\mathsf P_ M
\ar[d]^{\Sigma}\\
\mathsf P_{TM}\ar[r]_{\T\J}& \mathsf P_{TM}
}
\end{xy},
\end{displaymath}
i.e., 
$$\T \J=\Sigma\circ T\J\circ\Sigma\inv.$$
Then, by definition, we get for all $e\in\Gamma(\mathsf P_ M)$:
\begin{equation}\label{J_good_with_T}
\T \J\left(\T e\right)
=\left(\Sigma\circ T\J\right)(T e)
=\Sigma(T(\J(e)))=\T(\J(e)).
\end{equation}

The following lemma is immediate.
\begin{lem}\label{T_of_id_and_square}
  \begin{enumerate}
\item $\T(\Id_{\mathsf P_M})=
\Id_{\mathsf P_{TM}}$.
\item Let $\mathcal J:\mathsf P_M\to \mathsf P_M$ be a vector bundle
  morphism over the identity.
Then 
\[\T(\mathcal J^2)=\left(\T(\J)\right)^2.\]
\end{enumerate}
\end{lem}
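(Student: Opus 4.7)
The plan is to reduce both assertions to functoriality of the tangent functor combined with the definition $\T\J = \Sigma \circ T\J \circ \Sigma^{-1}$.

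For part (1), I would apply the tangent functor to the identity map $\Id_{\mathsf P_M}$. Since the tangent functor preserves identities, $T(\Id_{\mathsf P_M}) = \Id_{T\mathsf P_M}$. Plugging this into the definition gives
\[
\T(\Id_{\mathsf P_M}) = \Sigma \circ \Id_{T\mathsf P_M} \circ \Sigma^{-1} = \Id_{\mathsf P_{TM}},
\]
using that $\Sigma$ is a bundle isomorphism.

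For part (2), I would use the fact that the tangent functor preserves composition: $T(\J \circ \J) = T\J \circ T\J$. Inserting this into the definition and inserting the identity $\Sigma^{-1} \circ \Sigma$ in the middle yields
\[
\T(\J^2) = \Sigma \circ T\J \circ T\J \circ \Sigma^{-1} = (\Sigma \circ T\J \circ \Sigma^{-1}) \circ (\Sigma \circ T\J \circ \Sigma^{-1}) = (\T\J)^2.
\]

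There is no real obstacle here: both statements are immediate consequences of the functorial properties $T(\Id) = \Id$ and $T(f \circ g) = Tf \circ Tg$, together with the conjugation by the isomorphism $\Sigma$ that defines $\T\J$. The main thing to note is that since $\J$ is a bundle morphism over the identity on $M$, the conjugation by $\Sigma$ is well-defined as a map $\mathsf P_{TM} \to \mathsf P_{TM}$, so both compositions above live in the correct spaces.
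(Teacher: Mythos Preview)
Your proof is correct and is exactly the argument the paper has in mind: the lemma is stated as ``immediate'' with no proof given, and the immediacy comes precisely from the functoriality of $T$ together with the conjugation definition $\T\J=\Sigma\circ T\J\circ\Sigma^{-1}$.
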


Given a vector bundle morphism $\J:\mathsf P_ M\to\mathsf P_ M$, we
can consider as above
$$T\mathcal N_{\J}:T\mathsf P_ M\times_{TM} T\mathsf P_ M
\to T\mathsf P_ M.$$
Define
$\T\mathcal N_{\J}:\mathsf P_{TM}\to\mathsf P_{TM}$
by the following commutative diagram:
\begin{displaymath}\begin{xy}
    \xymatrix{ T\mathsf P_ M\times_{TM} T\mathsf P_ M\ar[rr]^{\qquad
        T\mathcal N_{\J}} \ar[d]_{\Sigma\times\Sigma}&&T\mathsf P_ M
      \ar[d]^{\Sigma}\\
      \mathsf P_{TM}\times_{TM}\mathsf P_{TM}\ar[rr]_{\qquad
        \T\mathcal N_{\J}}& &\mathsf P_{TM} }
\end{xy}.
\end{displaymath}
An easy computation using \eqref{J_good_with_T} 
and \eqref{Courant_bracket_of_lifts}
yields 
$$\T\mathcal N_{\J}\left(\T e_1, \T e_2\right)
=\mathcal N_{\T\J}\left(\T e_1, \T e_2\right)$$ for all $e_1,
e_2\in\Gamma(\mathsf P_ M)$.  As in the proof of Lemma
\ref{car_of_T_one_forms}, this implies that $\T\mathcal N_{\J}$ and
$\mathcal N_{\T\J}$ coincide at all points of $TM$ except for the zero
section of $TM$. By continuity, we obtain the following theorem.

\begin{thm}\label{thm_tangent_nijenhuis}
Let $\J:\mathsf P_ M\to \mathsf P_ M$ be a vector bundle morphism. Then 
\begin{equation}\label{eq_formula_for_nijenhuis}
  \T\mathcal N_{\J}=\mathcal N_{\T\J}.
\end{equation}
\end{thm}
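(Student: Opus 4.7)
The strategy is to establish the identity first on complete lifts of sections of $\mathsf P_M$, and then extend it to all of $\mathsf P_{TM}\times_{TM}\mathsf P_{TM}$ by a density-and-continuity argument analogous to the one used in Lemma \ref{car_of_T_one_forms}.

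First, I would evaluate the left-hand side on lifted sections. For any $e_1,e_2\in\Gamma(\mathsf P_M)$, the pair $(Te_1,Te_2)$ is a section of $T\mathsf P_M\times_{TM}T\mathsf P_M\to TM$, and by naturality of the tangent functor,
\[
T\mathcal N_\J(Te_1,Te_2)=T\bigl(\mathcal N_\J(e_1,e_2)\bigr).
\]
Applying $\Sigma$ and using the defining diagram of $\T\mathcal N_\J$ together with $\T e_i=\Sigma\circ Te_i$, this yields
\[
\T\mathcal N_\J(\T e_1,\T e_2)=\T\bigl(\mathcal N_\J(e_1,e_2)\bigr).
\]

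Next, I would expand $\mathcal N_\J(e_1,e_2)$ according to its definition and push $\T$ through each of the four terms. Since $\T$ is $\R$-linear on sections, it suffices to apply, term by term, the compatibilities already established: the bracket compatibility \eqref{Courant_bracket_of_lifts} from Proposition \ref{prop_Courant_dorfman}, the identity $\T\J(\T e)=\T(\J e)$ from \eqref{J_good_with_T}, and the identity $\T(\J^2)=(\T\J)^2$ from Lemma \ref{T_of_id_and_square}. For instance, $\T\llbracket\J e_1,\J e_2\rrbracket=\llbracket\T(\J e_1),\T(\J e_2)\rrbracket=\llbracket\T\J(\T e_1),\T\J(\T e_2)\rrbracket$, and similarly for the remaining three terms. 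Reassembling gives
\[
\T\bigl(\mathcal N_\J(e_1,e_2)\bigr)=\mathcal N_{\T\J}(\T e_1,\T e_2),
\]
so $\T\mathcal N_\J$ and $\mathcal N_{\T\J}$ agree on every pair of complete lifts.

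Finally, I would run the density argument. Given any $u\in TM$ with $u\neq 0$ and any $w_1,w_2\in\mathsf P_{TM}(u)$, one can, by the argument in the proof of Lemma \ref{car_of_T_one_forms}, find sections $e_1,e_2\in\Gamma(\mathsf P_M)$ whose complete lifts satisfy $\T e_i(u)=w_i$. Consequently $\T\mathcal N_\J$ and $\mathcal N_{\T\J}$ coincide pointwise on $\mathsf P_{TM}\times_{TM}\mathsf P_{TM}$ over $TM\setminus\{0\text{-section}\}$. Since both sides are continuous (indeed smooth bundle maps over $TM$), they coincide on the entire zero section as well, which proves \eqref{eq_formula_for_nijenhuis}.

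The main obstacle I anticipate is purely bookkeeping: in the term-by-term expansion in the second step, one has to be careful that the compositions $\T\circ\J$ and $\T\J\circ\T$ really match up in each of the four Courant-bracket terms, and that $\T$ applied to $\J^2\llbracket e_1,e_2\rrbracket$ correctly produces $(\T\J)^2\llbracket\T e_1,\T e_2\rrbracket$ via Lemma \ref{T_of_id_and_square}. Everything else is a clean application of the previously proved compatibilities.
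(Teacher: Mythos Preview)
Your proposal is correct and follows essentially the same route as the paper: establish $\T\mathcal N_{\J}(\T e_1,\T e_2)=\mathcal N_{\T\J}(\T e_1,\T e_2)$ for all $e_1,e_2\in\Gamma(\mathsf P_M)$ via \eqref{J_good_with_T} and \eqref{Courant_bracket_of_lifts}, then invoke the density argument of Lemma~\ref{car_of_T_one_forms} off the zero section and conclude by continuity. The only cosmetic difference is that you cite Lemma~\ref{T_of_id_and_square} for the $\J^2$ term, whereas the paper implicitly obtains $\T(\J^2 e)=(\T\J)^2(\T e)$ by applying \eqref{J_good_with_T} twice.
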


\subsection{Multiplicative  Nijenhuis tensor}

Let $\Gamma\rr M$ and $\Gamma'\rr M'$ be Lie groupoids. A map $\Phi:
\Gamma\to \Gamma'$ is a groupoid morphism if and only if the map
$\Phi\times \Phi\times \Phi$ restricts to a map
$$\Lambda_\Gamma\to \Lambda_{\Gamma'},$$
where $\Lambda_\Gamma$  and $\Lambda_{\Gamma'}$ 
are the graphs of the multiplications in $\Gamma\rr M$  and
respectively $\Gamma'\rr M'$.

\medskip

Consider the graphs of the multiplications on 
$T\Gamma$ and $T^*\Gamma$:
\[\Lambda_{T\Gamma}=\{(v_g,v_h,v_g\star v_h)\mid v_g, v_h\in T\Gamma,\, T\tg(v_h)=T\s(v_g)\}=T\Lambda_\Gamma\]
and 
\[\Lambda_{T^*\Gamma}=\{(\alpha_g,\alpha_h,\alpha_g\star \alpha_h)\mid \alpha_g, \alpha_h\in T^*\Gamma,\, \hat\tg(\alpha_h)
=\hat\s(\alpha_g)\}.\] Recall that if
\[\left(\Lambda_{T^*\Gamma}\right)^{\rm op}
=\{(\alpha_g,\alpha_h,-(\alpha_g\star \alpha_h))\mid \alpha_g, \alpha_h\in T^*\Gamma,\, 
\hat\tg(\alpha_h)=\hat\s(\alpha_g)\},\]
i.e.
\[\Lambda_{T\Gamma}\oplus_{\Lambda_\Gamma}\left(\Lambda_{T^*\Gamma}\right)^{\rm op}
=(\Id\times\Id\times\mathcal I)\left(\Lambda_{T\Gamma}\oplus_{\Lambda_\Gamma}\Lambda_{T^*\Gamma}\right),
\]
then
\[\left(\Lambda_{T^*\Gamma}\right)^{\rm op}=(T\Lambda_\Gamma)^\circ.\]

A map $\mathcal J:\mathsf P_\Gamma\to\mathsf P_\Gamma$ is a groupoid morphism
if and only if $\Lambda_{T\Gamma}\oplus_{\Lambda_\Gamma}\Lambda_{T^*\Gamma}$
is stable under the map $\mathcal J\times \mathcal J\times \mathcal J$.
This yields:
\begin{lem}\label{bar_J}
  The map $\mathcal J$ is a groupoid morphism if and only if
  $\Lambda_{T\Gamma}\oplus_{\Lambda_\Gamma}\left(\Lambda_{T^*\Gamma}\right)^{\rm
    op}$ is stable under the map $\mathcal J\times \mathcal J\times
  \bar{\mathcal J}$, where $ \bar{\mathcal J}$ is defined by
  \eqref{def_of_bar_J}.
\end{lem}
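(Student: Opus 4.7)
The plan is to reduce the stated condition to the one given in the paragraph just above the lemma, namely that $\mathcal J$ is a groupoid morphism if and only if the fibered product
\[
\Lambda:=\Lambda_{T\Gamma}\oplus_{\Lambda_\Gamma}\Lambda_{T^*\Gamma}
\]
is stable under $\mathcal J\times\mathcal J\times\mathcal J$. The bridge between the two formulations is the identity
\[
\Lambda_{T\Gamma}\oplus_{\Lambda_\Gamma}\left(\Lambda_{T^*\Gamma}\right)^{\rm op}
=(\Id\times\Id\times\mathcal I)(\Lambda),
\]
recorded immediately before the lemma, together with the involutive identity $\mathcal I^2=\Id_{\mathsf P_\Gamma}$ and the definition $\bar{\mathcal J}=\mathcal I\circ\mathcal J\circ\mathcal I$ from \eqref{def_of_bar_J}.

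Concretely, I would argue as follows. A generic element of $\Lambda_{T\Gamma}\oplus_{\Lambda_\Gamma}(\Lambda_{T^*\Gamma})^{\rm op}$ can be written as $(x,y,\mathcal I z)$ for $(x,y,z)\in\Lambda$. Applying $\mathcal J\times\mathcal J\times\bar{\mathcal J}$ gives
\[
(\mathcal J x,\mathcal J y,\bar{\mathcal J}(\mathcal I z))
=(\mathcal J x,\mathcal J y,\mathcal I(\mathcal I\bar{\mathcal J}\mathcal I)(z))
=(\mathcal J x,\mathcal J y,\mathcal I\mathcal J z),
\]
since $\mathcal I\bar{\mathcal J}\mathcal I=\mathcal I(\mathcal I\mathcal J\mathcal I)\mathcal I=\mathcal J$. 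Thus the triple $(\mathcal J x,\mathcal J y,\bar{\mathcal J}(\mathcal I z))$ lies in $\Lambda_{T\Gamma}\oplus_{\Lambda_\Gamma}(\Lambda_{T^*\Gamma})^{\rm op}$ if and only if $(\mathcal J x,\mathcal J y,\mathcal J z)$ lies in $\Lambda$. Running this equivalence over all points of $\Lambda$ gives exactly that $\Lambda^{\rm op}$ is stable under $\mathcal J\times\mathcal J\times\bar{\mathcal J}$ iff $\Lambda$ is stable under $\mathcal J\times\mathcal J\times\mathcal J$, which by the preceding characterization is the statement that $\mathcal J$ is a groupoid morphism.

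There is no real obstacle: everything boils down to the algebraic identity $\mathcal I\bar{\mathcal J}\mathcal I=\mathcal J$ and the transparent description of $\Lambda^{\rm op}$ as the image of $\Lambda$ under $\Id\times\Id\times\mathcal I$. The only point deserving a brief sanity check is that $\bar{\mathcal J}$, and not $\mathcal J$, is the correct third factor; this is forced because reversing the sign of the third cotangent component (to go from $\Lambda_{T^*\Gamma}$ to $(\Lambda_{T^*\Gamma})^{\rm op}$) must be absorbed on both sides of the map by $\mathcal I$, and the two copies of $\mathcal I$ surrounding $\mathcal J$ are precisely what defines $\bar{\mathcal J}$.
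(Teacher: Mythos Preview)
Your argument is correct and is exactly the unpacking the paper intends with its ``This yields:'' before the lemma: you use the identity $\Lambda_{T\Gamma}\oplus_{\Lambda_\Gamma}(\Lambda_{T^*\Gamma})^{\rm op}=(\Id\times\Id\times\mathcal I)(\Lambda)$ together with $\bar{\mathcal J}=\mathcal I\mathcal J\mathcal I$ and $\mathcal I^2=\Id$ to conjugate the third-factor stability condition, which is precisely the paper's (implicit) proof.
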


Since $\Lambda_{T\Gamma}=T\Lambda_\Gamma$ and
$\left(\Lambda_{T^*\Gamma}\right)^{\rm op}\cong
(T\Lambda_\Gamma)^\circ$, we get that $\mathcal J$ is multiplicative
if and only if
$T\Lambda_{\Gamma}\oplus\left(T\Lambda_{\Gamma}\right)^\circ$ is
stable under $\mathcal J\times \mathcal J\times \bar{\mathcal J}$.

Similarly, the map $\mathcal N_\J:\mathsf P_\Gamma\times_\Gamma\mathsf
P_\Gamma\to\mathsf P_\Gamma$ is multiplicative if and only if
$\mathcal N_\J\times\mathcal N_\J \times\mathcal N_\J$ restricts to a
map
$$(\Lambda_{T\Gamma}\oplus_{\Lambda_\Gamma}\Lambda_{T^*\Gamma})
\times_{\Lambda_\Gamma}(\Lambda_{T\Gamma}\oplus_{\Lambda_\Gamma}\Lambda_{T^*\Gamma})
\to \Lambda_{T\Gamma}\oplus_{\Lambda_\Gamma}\Lambda_{T^*\Gamma}.$$
\begin{lem}\label{bar_N_J}
The map $\mathcal N_\J$ is multiplicative 
if and only if $\mathcal N_\J\times\mathcal N_\J \times\mathcal N_{\bar{\J}}$ restricts to a map
$$\left(T\Lambda_{\Gamma}\oplus_{\Lambda_\Gamma}(T\Lambda_{\Gamma})^\circ
\right)\times_{\Lambda_\Gamma}\left(T\Lambda_{\Gamma}\oplus_{\Lambda_\Gamma}(T\Lambda_{\Gamma})^\circ
\right)\to
T\Lambda_{\Gamma}\oplus_{\Lambda_\Gamma}(T\Lambda_{\Gamma})^\circ.$$ 
\end{lem}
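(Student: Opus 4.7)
My plan is to mirror the argument of Lemma \ref{bar_J}, treating the Nijenhuis tensor $\mathcal N_\J$ as a binary-in/unary-out map in place of the endomorphism $\mathcal J$. The only non-routine ingredient needed is already in hand, namely the identity
\[
\mathcal N_{\bar\J}=\mathcal I\circ\mathcal N_\J\circ(\mathcal I,\mathcal I),
\]
recorded in the proposition preceding the examples in Section \ref{sec:gcs}.

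First I would unfold the definition of multiplicativity stated immediately before the lemma: $\mathcal N_\J$ is multiplicative exactly when the product map $\mathcal N_\J\times\mathcal N_\J\times\mathcal N_\J$ restricts to a map
\[
(\Lambda_{T\Gamma}\oplus_{\Lambda_\Gamma}\Lambda_{T^*\Gamma})\times_{\Lambda_\Gamma}(\Lambda_{T\Gamma}\oplus_{\Lambda_\Gamma}\Lambda_{T^*\Gamma})\;\to\;\Lambda_{T\Gamma}\oplus_{\Lambda_\Gamma}\Lambda_{T^*\Gamma}.
\]
Then I would invoke the bijection used already in Lemma \ref{bar_J},
\[
T\Lambda_\Gamma\oplus_{\Lambda_\Gamma}(T\Lambda_\Gamma)^\circ \;=\;(\Id\times\Id\times\mathcal I)\bigl(\Lambda_{T\Gamma}\oplus_{\Lambda_\Gamma}\Lambda_{T^*\Gamma}\bigr),
\]
to transport this restriction condition to the conormal submanifold.

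Concretely, given a pair of compatible triples in $T\Lambda_\Gamma\oplus(T\Lambda_\Gamma)^\circ$, flipping the cotangent component of the third slot by $\mathcal I$ yields a pair of compatible triples in $\Lambda_{T\Gamma}\oplus\Lambda_{T^*\Gamma}$; by multiplicativity $\mathcal N_\J$ sends the triple back into $\Lambda_{T\Gamma}\oplus\Lambda_{T^*\Gamma}$; flipping the third slot again by $\mathcal I$ puts us back in $T\Lambda_\Gamma\oplus(T\Lambda_\Gamma)^\circ$. The composite leaves the first two components as $\mathcal N_\J$ while the third becomes $\mathcal I\circ\mathcal N_\J\circ(\mathcal I,\mathcal I)$, which by the displayed identity is precisely $\mathcal N_{\bar\J}$. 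The converse direction is exactly the same argument read backwards, using $\mathcal I^2=\Id$ (and hence $\bar{\bar\J}=\J$).

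The one point requiring care, which is the closest thing to an obstacle, is verifying that the source/target fibered-product conditions implicit in the symbols $\times_{\Lambda_\Gamma}$ and $\oplus_{\Lambda_\Gamma}$ really do survive the $\mathcal I$-flip; but $\mathcal I$ is the identity on the tangent component, covers the identity on $\Gamma$, and only changes the sign of cotangent vectors, so composability under $\hat\s$ and $\hat\tg$ is preserved. Consequently no difficulty beyond the bookkeeping already present in Lemma \ref{bar_J} arises, and the equivalence follows.
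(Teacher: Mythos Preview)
Your proposal is correct and follows exactly the approach the paper intends: the paper states Lemma~\ref{bar_N_J} without an explicit proof, relying on the reader to mimic Lemma~\ref{bar_J} using the identity $\mathcal N_{\bar\J}=\mathcal I\circ\mathcal N_\J\circ(\mathcal I,\mathcal I)$ and the displayed bijection $\Lambda_{T\Gamma}\oplus_{\Lambda_\Gamma}(\Lambda_{T^*\Gamma})^{\mathrm{op}}=(\Id\times\Id\times\mathcal I)(\Lambda_{T\Gamma}\oplus_{\Lambda_\Gamma}\Lambda_{T^*\Gamma})$, which is precisely what you do.
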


The following lemma is easy to prove.
\begin{lem}\label{lem_submanifold}
  If $M$ is a smooth manifold and $N$ a submanifold of $M$, then the
  Courant-Dorfman bracket on $\mathsf P_ M$ restricts to sections of
  $TN\oplus TN^\circ$.
\end{lem}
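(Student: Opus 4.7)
The first task is to make the statement precise: a section $e = X + \alpha$ of $\mathsf P_M$ \emph{restricts} to a section of $TN \oplus TN^\circ$ if $X\an{N}$ is tangent to $N$ and $\alpha\an{N}$ annihilates $TN$. The claim is then that for two such sections $e_i = X_i + \alpha_i$ ($i = 1, 2$), the Dorfman bracket
\[
\llbracket e_1, e_2 \rrbracket = [X_1, X_2] + \ldr{X_1}\alpha_2 - \ip{X_2}\dr\alpha_1
\]
inherits the same property along $N$.

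For the $TM$-component this reduces to the classical fact that the Lie bracket of two vector fields tangent to $N$ along $N$ is tangent to $N$ along $N$, visible at once in coordinates adapted to the submanifold. For the $T^*M$-component $\ldr{X_1}\alpha_2 - \ip{X_2}\dr\alpha_1$, I would test against an arbitrary vector $v \in T_pN$ at a point $p \in N$, extended to a vector field $V$ on $M$ that is tangent to $N$ along $N$. Cartan's formulas give
\begin{align*}
(\ldr{X_1}\alpha_2)(V) &= X_1(\alpha_2(V)) - \alpha_2([X_1, V]), \\
(\ip{X_2}\dr\alpha_1)(V) &= X_2(\alpha_1(V)) - V(\alpha_1(X_2)) - \alpha_1([X_2, V]).
\end{align*}
Each term vanishes along $N$: the functions $\alpha_i(V)$ and $\alpha_i(X_j)$ vanish on $N$ by hypothesis; a vector field tangent to $N$ applied to a function vanishing on $N$ again vanishes on $N$; and the Lie brackets $[X_i, V]$ and $[X_j, X_k]$ are tangent to $N$ along $N$ by the first step, so $\alpha_\ell$ annihilates them there.

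The only minor subtlety is to verify that the value of $\bigl(\ldr{X_1}\alpha_2 - \ip{X_2}\dr\alpha_1\bigr)(p)$ on a vector $v \in T_pN$ really depends only on the pointwise data, so that the computation above via an extension is legitimate; this is immediate because the Dorfman bracket produces a genuine $1$-form whose pairing with any tangent vector is well-defined independently of extensions. There is no substantive obstacle here: once the interpretation of "restricts" is fixed, the lemma reduces to elementary tensorial manipulations with the Cartan calculus, exactly as above.
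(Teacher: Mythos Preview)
Your proof is correct and is exactly the direct Cartan-calculus verification one expects; the paper itself gives no proof, merely declaring the lemma ``easy to prove'' and remarking that $TN\oplus (TN)^\circ$ is a generalized Dirac structure in the sense of Alekseev--Xu. Your interpretation of ``restricts'' matches how the lemma is applied in the subsequent theorem, so nothing is missing.
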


Note that $TN\oplus (TN)^\circ$ is a generalized Dirac structure in the
sense of \cite{AlXu01}.  We get the following theorem.
\begin{thm}\label{nijenhuis_also_mult}
Let $(\Gamma\rr M,\mathcal J)$ be an almost generalized complex groupoid.
Then the Nijenhuis tensor $\mathcal N_{\mathcal J}$ is a 
Lie groupoid morphism
$$ \mathcal N_{\mathcal J}:\mathsf P_ \Gamma\times_\Gamma\mathsf P_ \Gamma\to \mathsf P_ \Gamma.$$
\end{thm}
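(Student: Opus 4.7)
The plan is to reduce the theorem to checking that a single bundle endomorphism preserves a Dirac-type subbundle sitting over the graph of multiplication, using Lemma \ref{bar_N_J} as the scaffold. That lemma rephrases the multiplicativity of $\mathcal N_{\mathcal J}$ as the requirement that the product map $\mathcal N_{\mathcal J}\times\mathcal N_{\mathcal J}\times\mathcal N_{\bar{\mathcal J}}$ send the fibered product $(T\Lambda_\Gamma\oplus(T\Lambda_\Gamma)^\circ)\times_{\Lambda_\Gamma}(T\Lambda_\Gamma\oplus(T\Lambda_\Gamma)^\circ)$ into $T\Lambda_\Gamma\oplus(T\Lambda_\Gamma)^\circ$. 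The whole argument therefore takes place inside $\mathsf P_{\Gamma^3}$ restricted over the submanifold $\Lambda_\Gamma\subset \Gamma^3$.

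I would then assemble three ingredients. First, set $\tilde{\mathcal J}:=\mathcal J\times\mathcal J\times\bar{\mathcal J}$ as a bundle endomorphism of $\mathsf P_{\Gamma^3}$; by Lemma \ref{bar_J}, the assumed multiplicativity of $\mathcal J$ is exactly the statement that $\tilde{\mathcal J}$ pointwise stabilizes $T\Lambda_\Gamma\oplus(T\Lambda_\Gamma)^\circ$ over $\Lambda_\Gamma$. Second, Lemma \ref{lem_submanifold} applied to the submanifold $\Lambda_\Gamma\hookrightarrow\Gamma^3$ says that the Courant-Dorfman bracket on $\mathsf P_{\Gamma^3}$ restricts to sections of $T\Lambda_\Gamma\oplus(T\Lambda_\Gamma)^\circ$. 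Third, I would identify the Nijenhuis tensor of $\tilde{\mathcal J}$ with the direct product
\[\mathcal N_{\tilde{\mathcal J}}=\mathcal N_{\mathcal J}\times\mathcal N_{\mathcal J}\times\mathcal N_{\bar{\mathcal J}},\]
using the tensoriality of the Nijenhuis construction together with the fact that on sections of product form the Courant-Dorfman bracket on $\mathsf P_{\Gamma^3}$ decouples into the three factor brackets (mixed-factor brackets vanish).

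With these in hand the conclusion is immediate. To evaluate $\mathcal N_{\tilde{\mathcal J}}$ on two elements of $T\Lambda_\Gamma\oplus(T\Lambda_\Gamma)^\circ$ at a point of $\Lambda_\Gamma$, I extend them to sections of that subbundle; by the second ingredient the Courant brackets entering the Nijenhuis formula stay in this subbundle, and by the first ingredient so do applications of $\tilde{\mathcal J}$; hence the whole expression computing $\mathcal N_{\tilde{\mathcal J}}$ lands in $T\Lambda_\Gamma\oplus(T\Lambda_\Gamma)^\circ$. The third ingredient identifies this value with that of $\mathcal N_{\mathcal J}\times\mathcal N_{\mathcal J}\times\mathcal N_{\bar{\mathcal J}}$, which by Lemma \ref{bar_N_J} is the multiplicativity of $\mathcal N_{\mathcal J}$.

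The step I expect to require the most care is the identification $\mathcal N_{\tilde{\mathcal J}}=\mathcal N_{\mathcal J}\times\mathcal N_{\mathcal J}\times\mathcal N_{\bar{\mathcal J}}$. Since the Nijenhuis tensor is built entirely out of Courant-Dorfman brackets and the endomorphism, the assertion rests on the factorwise behavior of the Courant bracket on $\mathsf P_{M_1\times M_2\times M_3}$ evaluated on sections pulled back from the factors. Once one notes that such pulled-back sections span every fiber and that $\mathcal N$ is genuinely a tensor, one may compute pointwise with product extensions; this yields the desired block-diagonal decomposition. The only mildly delicate bookkeeping is the $\bar{\mathcal J}$ in the third slot, which is handled by the identity $\mathcal N_{\bar{\mathcal J}}=\mathcal I\circ\mathcal N_{\mathcal J}\circ(\mathcal I,\mathcal I)$ already noted in the paper.
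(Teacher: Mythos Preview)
Your proposal is correct and follows essentially the same route as the paper's proof: both use Lemma~\ref{bar_J} to recast multiplicativity of $\mathcal J$ as invariance of $T\Lambda_\Gamma\oplus(T\Lambda_\Gamma)^\circ$ under $\mathcal J\times\mathcal J\times\bar{\mathcal J}$, invoke Lemma~\ref{lem_submanifold} for closure of that subbundle under the Courant bracket on $\Gamma^3$, and conclude via Lemma~\ref{bar_N_J}. The only cosmetic difference is that you package the product endomorphism as $\tilde{\mathcal J}$ and pass through $\mathcal N_{\tilde{\mathcal J}}=\mathcal N_{\mathcal J}\times\mathcal N_{\mathcal J}\times\mathcal N_{\bar{\mathcal J}}$, whereas the paper works directly with product-form sections $(\xi_1,\xi_2,\xi_3)$ pulled back from the factors and computes the three Nijenhuis tensors componentwise; the content is the same.
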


\begin{proof}
  Choose sections $\xi_1, \xi_2, \xi_3, \eta_1, \eta_2,
  \eta_3\in\Gamma(\mathsf P_\Gamma)$ such
  that \[(\xi_1,\xi_2,\xi_3)\an{\Lambda_\Gamma},\,
  (\eta_1,\eta_2,\eta_3)\an{\Lambda_\Gamma}
  \in\Gamma(T\Lambda_{\Gamma}\oplus_{\Lambda_\Gamma}\left(T\Lambda_{\Gamma}\right)^\circ).\]
  Then we have
 \[(\J \xi_1,\J \xi_2,\bar \J\xi_3)\an{\Lambda_\Gamma},\, (\J \eta_1,\J \eta_2,\bar\J\eta_3)\an{\Lambda_\Gamma}
\in\Gamma(T\Lambda_{\Gamma}\oplus_{\Lambda_\Gamma}\left(T\Lambda_{\Gamma}\right)^\circ).\]
From Lemma \ref{lem_submanifold}, it follows that
\[(\mathcal N_\J\times\mathcal N_\J\times \mathcal N_{\bar\J})
\left((\xi_1,\xi_2,\xi_3),  (\eta_1,\eta_2,\eta_3)\right)\]
takes values in 
\[T\Lambda_{\Gamma}\oplus_{\Lambda_\Gamma}\left(T\Lambda_{\Gamma}\right)^\circ\]
on $\Lambda_{\Gamma}$. By Lemma \ref{bar_N_J}, we are done.
\end{proof}

\bigskip

\subsection{Infinitesimal  multiplicative  Nijenhuis tensor}
\begin{defn}
Let $\mathcal J:\mathsf P_\Gamma\to \mathsf P_\Gamma$ be a
Lie groupoid morphism. 
The map 
$$\oplie(\J): \mathsf P_A\to \mathsf P_A
$$
is defined by the commutative diagramm
\begin{displaymath}
\begin{xy}
  \xymatrix{ \mathsf A(\mathsf P_\Gamma)\ar[rr]^{\Sigma_\Gamma}
    \ar[dd]_{\mathsf A(\J)}&&  \mathsf P_A\ar[dd]^{\oplie(\J)}\\
    \\
    \mathsf A(\mathsf P_\Gamma)\ar[rr]_{\Sigma_\Gamma}&& \mathsf P_A }
\end{xy}.
\end{displaymath}
\end{defn}

The following lemma can be found in \cite{BuCadelHo11}.
\begin{lem}\label{lie_is_vb_hom}
Let $\mathcal J:\mathsf P_\Gamma\to \mathsf P_\Gamma$ be a multiplicative 
map. Then
\[\oplie(\J):\mathsf P_A\to \mathsf P_A\]
is a  vector bundle morphism
if and only if  
\[\mathcal J:\mathsf P_\Gamma\to \mathsf P_\Gamma\]
 is a vector bundle morphism.
\end{lem}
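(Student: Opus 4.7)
The lemma is essentially the specialisation to $\mathsf P_\Gamma$ of the general fact (proved in \cite{BuCadelHo11}) that the Lie functor restricts to an equivalence between VB-groupoid morphisms and VB-algebroid morphisms. My plan is therefore to (1) make precise the VB-groupoid structure on $\mathsf P_\Gamma$, (2) check that $\Sigma_\Gamma$ transports the induced VB-algebroid structure on $\mathsf A(\mathsf P_\Gamma)$ to the VB-algebroid structure on $\mathsf P_A$, and (3) invoke the reference.

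More concretely, I would first recall that $\mathsf P_\Gamma\rr TM\oplus A^*$, with vector bundle side $\mathsf P_\Gamma\to\Gamma$ and base side $TM\oplus A^*\to M$, is a VB-groupoid: this encodes the fact that the vector bundle operations (fibrewise addition and scalar multiplication) are Lie groupoid morphisms. Dually, $\mathsf P_A\to TM\oplus A^*$, with vector bundle side $\mathsf P_A\to A$, is a VB-algebroid. The key characterisation I would use is that a smooth map between vector bundles is a VB-morphism precisely when it commutes with the fibrewise scalar multiplications $h_\lambda$ for every $\lambda\in\R$---smoothness plus homogeneity of degree $1$ on each fibre forces linearity.

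Next I would verify that $\Sigma_\Gamma=(\sigma_\Gamma,\varsigma_\Gamma)$ from Subsection~\ref{canonical_id_subsec} intertwines $\mathsf A(h_\lambda^\Gamma)$ with $h_\lambda^A$, i.e.\ $\oplie(h_\lambda^\Gamma)=h_\lambda^A$. This is a standard naturality property: the canonical involution $\sigma$ on iterated tangent bundles and the Legendre isomorphism $I_\Gamma$ underlying $\varsigma_\Gamma$ both commute with fibrewise homotheties, so this identity can be checked either by a direct diagram chase or by appeal to the functoriality of the constructions that produce $\Sigma_\Gamma$.

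Finally, since $\mathsf A$ is a functor and $\J$ is multiplicative (so that $\mathsf A(\J)$ is well defined), I would combine the two preceding steps into the chain of equivalences
\begin{align*}
\J\ \text{is a VB-morphism}
&\iff \J\circ h_\lambda^\Gamma=h_\lambda^\Gamma\circ\J\ \text{for all }\lambda\in\R\\
&\iff \mathsf A(\J)\circ\mathsf A(h_\lambda^\Gamma)=\mathsf A(h_\lambda^\Gamma)\circ\mathsf A(\J)\ \text{for all }\lambda\in\R\\
&\iff \oplie(\J)\circ h_\lambda^A=h_\lambda^A\circ\oplie(\J)\ \text{for all }\lambda\in\R\\
&\iff \oplie(\J)\ \text{is a VB-morphism}.
\end{align*}
The only step requiring technical care is the naturality identity $\oplie(h_\lambda^\Gamma)=h_\lambda^A$; once that is in hand, the rest is a formal consequence of functoriality and the homogeneity-implies-linearity principle. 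Since the whole package is precisely the content of the equivalence of categories between VB-groupoids and VB-algebroids, I would then simply cite \cite{BuCadelHo11}.
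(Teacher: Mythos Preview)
The paper gives no proof of this lemma at all: it simply states that the result ``can be found in \cite{BuCadelHo11}'' and moves on. Your proposal ends in exactly the same place, so as a proof of the lemma it is aligned with the paper's approach; the sketch you supply beforehand is additional detail the paper does not provide.

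One point worth flagging in your sketch: the second biconditional in your chain, passing from $\mathsf A(\J)\circ\mathsf A(h_\lambda^\Gamma)=\mathsf A(h_\lambda^\Gamma)\circ\mathsf A(\J)$ back to $\J\circ h_\lambda^\Gamma=h_\lambda^\Gamma\circ\J$, is the assertion that the Lie functor is \emph{faithful} on morphisms $\mathsf P_\Gamma\to\mathsf P_\Gamma$. This is automatic only when the source fibres of $\mathsf P_\Gamma\rr TM\oplus A^*$ are connected, which in turn requires $\Gamma$ itself to be $\s$-connected---a hypothesis the lemma does not impose. The result in \cite{BuCadelHo11} covers the general case by exploiting the extra structure of a VB-groupoid (core, splittings, etc.)\ rather than bare faithfulness, so your final citation is what actually closes the argument; just be aware that the four-line chain on its own is not a complete proof of the ``only if'' direction.
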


Since the map $\mathcal N_\J:\mathsf P_\Gamma\times_\Gamma\mathsf P_\Gamma\to
\mathsf P_\Gamma$
is then also a Lie groupoid morphism 
by Theorem \ref{nijenhuis_also_mult}, we can also consider
\[\oplie(\mathcal N_\J):\mathsf P_A\times_A\mathsf P_A\to \mathsf P_A\]
defined by
\begin{displaymath}
\begin{xy}
\xymatrix{
\mathsf A(\mathsf P_\Gamma)\times_A\mathsf A(\mathsf P_\Gamma)
\ar[rr]^{\quad \Sigma_\Gamma^2}
\ar[dd]_{\mathsf A(\mathcal N_\J)}&&  \mathsf P_A\times_A\mathsf P_A\ar[dd]^{\oplie(\mathcal N_\J)}\\
\\
\mathsf A(\mathsf P_\Gamma)\ar[rr]_{\Sigma_\Gamma}
&& \mathsf P_A
}
\end{xy}.
\end{displaymath}

The main result of this section is the following.

\begin{thm}\label{A_of_nij_and_pairing}
  Let $\mathcal J:\mathsf P_\Gamma\to \mathsf P_\Gamma$ be a vector
  bundle homomorphism and a Lie groupoid morphism.  Then
\begin{enumerate}
\item The map $\oplie(\mathcal J)$ is 
$\langle\cdot\,,\cdot\rangle_{A}$-orthogonal if and only if 
$\J$ is $\langle\cdot\,,\cdot\rangle_{\Gamma}$ -orthogonal
  and,
\item in that case,
\begin{equation}\label{eq_nijenhuis}
\oplie(\mathcal N_{\J})=\mathcal N_{\oplie(\mathcal J)}.
\end{equation}
\end{enumerate}
\end{thm}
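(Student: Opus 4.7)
\emph{Plan for part (1).} Both $\mathcal J$ and $\langle\cdot,\cdot\rangle_\Gamma$ being Lie groupoid morphisms, the $\mathcal J$-orthogonality condition is an identity $\langle\mathcal J\cdot,\mathcal J\cdot\rangle_\Gamma=\langle\cdot,\cdot\rangle_\Gamma$ between two Lie groupoid morphisms $\mathsf P_\Gamma\times_\Gamma\mathsf P_\Gamma\to\mathbb R$. Apply the Lie functor $\mathsf A$: by functoriality, by the definition $\oplie(\mathcal J)=\Sigma_\Gamma\circ\mathsf A(\mathcal J)\circ\Sigma_\Gamma^{-1}$, and by the identification of $\mathsf A(\langle\cdot,\cdot\rangle_\Gamma)$ with $\langle\cdot,\cdot\rangle_A$ via $\Sigma_\Gamma$ established at the end of the preceding subsection, the identity transports to $\langle\oplie(\mathcal J)\cdot,\oplie(\mathcal J)\cdot\rangle_A=\langle\cdot,\cdot\rangle_A$. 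This gives the forward implication; the converse holds because both sides of the original equality are multiplicative $\mathbb R$-valued maps on the groupoid $\mathsf P_\Gamma\times_\Gamma\mathsf P_\Gamma$, so agreement at the Lie algebroid level lifts to agreement of the groupoid morphisms, either by the standard correspondence between multiplicative structures and their infinitesimal data or by propagating the identity from the units using multiplicativity of $\mathcal J$ and of the pairing.

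\emph{Plan for part (2).} The strategy is to restrict the tangent-level identity $\T\mathcal N_{\mathcal J}=\mathcal N_{\T\mathcal J}$ of Theorem \ref{thm_tangent_nijenhuis}, which lives on $\mathsf P_{T\Gamma}$, down to the Lie algebroid $\mathsf P_A$. First, by Theorem \ref{nijenhuis_also_mult}, $\mathcal N_{\mathcal J}$ is itself a Lie groupoid morphism, so $\oplie(\mathcal N_{\mathcal J})$ is well-defined. Second, using the standard description $\mathsf A(F)=TF|_{\mathsf A(\mathrm{dom})}$ of the Lie functor on morphisms, as already exploited in \eqref{A_of_bracket}, we have $\mathsf A(\mathcal N_{\mathcal J})=T\mathcal N_{\mathcal J}|_{\mathsf A(\mathsf P_\Gamma)\times_A\mathsf A(\mathsf P_\Gamma)}$; conjugation by $\Sigma$ then converts this into $\T\mathcal N_{\mathcal J}=\mathcal N_{\T\mathcal J}$ on $\mathsf P_{T\Gamma}$ by Theorem \ref{thm_tangent_nijenhuis}. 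Third, Lemma \ref{relation_Sigma_SigmaGamma} compares $\Sigma_\Gamma$ with $\Sigma$ on $\mathsf A(\mathsf P_\Gamma)$: the former is obtained from the latter by taking the tangent component inside $TA\subset TT\Gamma|_A$ and restricting the cotangent component from $T^*T\Gamma|_A$ to $T^*A$. This shows both that $\oplie(\mathcal J)$ is the corresponding restriction of $\T\mathcal J$ and that $\oplie(\mathcal N_{\mathcal J})$ is the corresponding restriction of $\mathcal N_{\T\mathcal J}$. Since the Dorfman bracket on $\mathsf P_{T\Gamma}$ restricts along the submanifold $A\subset T\Gamma$ by Lemma \ref{lem_submanifold}, the Nijenhuis tensor of the restricted map equals the restriction of the Nijenhuis tensor, yielding $\oplie(\mathcal N_{\mathcal J})=\mathcal N_{\oplie\mathcal J}$.

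\emph{Main obstacle.} The principal difficulty is the bookkeeping between the three closely related isomorphisms $\mathsf A(\cdot)$, $\Sigma$, and $\Sigma_\Gamma$, whose mutual compatibility is exactly the content of Lemma \ref{relation_Sigma_SigmaGamma}. Each of the four Courant--Dorfman brackets entering the formula for $\mathcal N_{\mathcal J}$---namely $\llbracket\mathcal J\xi,\mathcal J\eta\rrbracket$, $\mathcal J^2\llbracket\xi,\eta\rrbracket$, $\mathcal J\llbracket\mathcal J\xi,\eta\rrbracket$, and $\mathcal J\llbracket\xi,\mathcal J\eta\rrbracket$---must be shown to restrict consistently from $\mathsf P_{T\Gamma}$ over $T\Gamma$ to $\mathsf P_A$ over $A$ through the quotient implicit in Lemma \ref{relation_Sigma_SigmaGamma} (namely $T^*T\Gamma|_A\twoheadrightarrow T^*A$ modulo $(TA)^\circ$). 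The orthogonality of $\mathcal J$ from part (1) enters here in a crucial way: the non-tensorial correction term distinguishing the skew Courant bracket of \eqref{wrong_bracket} from the Dorfman bracket depends on the pairing, and its cancellations inside $\mathcal N_{\mathcal J}$ rely on $\mathcal J$ preserving $\langle\cdot,\cdot\rangle_\Gamma$; without this, $\mathcal N_{\mathcal J}$ would fail to descend cleanly under the above identifications.
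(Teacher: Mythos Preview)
Your overall strategy matches the paper's: part (1) is handled by functoriality of $\mathsf A$ together with the identification $\mathsf A(\langle\cdot,\cdot\rangle_\Gamma)\cong\langle\cdot,\cdot\rangle_A$ under $\Sigma_\Gamma$, and part (2) by restricting the tangent-level identity $\T\mathcal N_{\mathcal J}=\mathcal N_{\T\mathcal J}$ of Theorem~\ref{thm_tangent_nijenhuis} from $\mathsf P_{T\Gamma}$ down to $\mathsf P_A$ via Lemma~\ref{relation_Sigma_SigmaGamma}.

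Two points of divergence are worth noting. First, your invocation of Lemma~\ref{lem_submanifold} is off target: that lemma concerns the subbundle $TN\oplus(TN)^\circ\subset\mathsf P_M$, not the pullback $\mathsf P_N$, and says nothing about descent through the quotient $T^*T\Gamma|_A\twoheadrightarrow T^*A$. The paper instead introduces the notion of $\iota$-related sections and proves (Lemma~\ref{StXu08}) that the Courant--Dorfman bracket sends $\iota$-related pairs to $\iota$-related sections; this is the correct mechanism for the restriction, and it is packaged into Lemmas~\ref{tensors_related_then_also_nij} and~\ref{tensors_on_A_and_TG_related}, which show both $\oplie(\mathcal J)\sim_\iota\T\mathcal J$ and $\oplie(\mathcal N_{\mathcal J})\sim_\iota\T\mathcal N_{\mathcal J}$. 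Second, your claim that orthogonality from part (1) is ``crucial'' for part (2) is not borne out by the paper's argument: the $\iota$-relatedness of brackets in Lemma~\ref{StXu08} holds for both the Courant and the Dorfman bracket without any orthogonality assumption on $\mathcal J$, and the proof of \eqref{eq_nijenhuis} goes through using only that $\mathcal J$ is a multiplicative vector bundle morphism. The phrase ``in that case'' in the statement is not load-bearing for the argument.
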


For the proof, we need a couple of  lemmas.

\begin{defn}
Let $M$ be a smooth manifold and $\iota:N\hookrightarrow M$ a submanifold of $M$.
\begin{enumerate}
\item A section $e_N=:X_N+\alpha_N$ is $\iota$-related to
  $e_M=X_M+\alpha_M$ if $\iota_*X_N= X_M\an{N}$ and
  $\alpha_N=\iota^*\alpha_M$.  We write then $e_N\sim_{\iota}e_M$.
\item Two  vector bundle morphisms
$\mathcal J_N:\mathsf P_N\to\mathsf P_N$ and  $\mathcal
J_M:\mathsf P_M\to\mathsf P_M$ 
are said to be \emph{$\iota$-related} if 
for each section $e_N$ of $\mathsf P_N$, there 
exists a section $e_M\in\mathsf P_M$ such that
$e_N\sim_{\iota}e_M$ and $\mathcal J_N(e_N)\sim_{\iota}\mathcal J_M(e_M)$.
\item Two vector bundle morphisms 
$\mathcal N_N: \mathsf P_N\times_N\mathsf
  P_N\to \mathsf P_N$ and 
$\mathcal N_M: \mathsf P_M\times_M\mathsf
  P_M\to \mathsf P_M$ 
are \emph{$\iota$-related} if for
  each pair of sections $e_N, f_N\in\Gamma(\mathsf P_N)$,
there exist sections $e_M,f_M\in\Gamma(\mathsf P_M)$ such that
$e_N\sim_{\iota}e_M$, $f_N\sim_{\iota}f_M$ and 
$\mathcal N_N(e_N,f_N)\sim_{\iota}\mathcal N_M(e_M,f_M)$. 
\end{enumerate}
\end{defn}

\begin{lem}\label{StXu08}
Let $M$ be a manifold and $N\subseteq M$ a submanifold.
If $e_N,f_N\in\Gamma(\mathsf P_N)$ are $\iota$-related to
$e_M,f_M\in\Gamma(\mathsf P_M)$, 
then $\llbracket e_N, f_N\rrbracket\sim_{\iota}\llbracket e_M,
f_M\rrbracket$, 
for the Dorfman and the Courant bracket.
\end{lem}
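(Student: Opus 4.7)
The plan is to verify the statement component by component, exploiting the naturality of all the differential-geometric operations involved in the Dorfman bracket
\[
\llbracket X+\alpha,\, Y+\beta\rrbracket \;=\; [X,Y] \;+\; \bigl(\mathcal{L}_X\beta - \ip{Y}\dr\alpha\bigr).
\]
Write $e_N=X_N+\alpha_N$, $f_N=Y_N+\beta_N$ and $e_M=X_M+\alpha_M$, $f_M=Y_M+\beta_M$, so that by hypothesis $\iota_*X_N=X_M|_N$, $\iota_*Y_N=Y_M|_N$, $\alpha_N=\iota^*\alpha_M$ and $\beta_N=\iota^*\beta_M$. I need to check that the vector-field component of the bracket on $N$ is $\iota$-related to the one on $M$, and that the one-form component on $N$ is the pullback along $\iota$ of the one on $M$.

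For the vector-field part I invoke the classical fact that $\iota$-relatedness of vector fields is preserved under Lie bracket, so $[X_N,Y_N]$ is $\iota$-related to $[X_M,Y_M]$. For the one-form part I treat the two summands separately. By naturality of the Lie derivative along $\iota$-related vector fields,
\[
\iota^*(\mathcal{L}_{X_M}\beta_M) \;=\; \mathcal{L}_{X_N}(\iota^*\beta_M) \;=\; \mathcal{L}_{X_N}\beta_N .
\]
By naturality of the exterior derivative and of the contraction along $\iota$-related vector fields,
\[
\iota^*\bigl(\ip{Y_M}\dr\alpha_M\bigr) \;=\; \ip{Y_N}\iota^*\dr\alpha_M \;=\; \ip{Y_N}\dr(\iota^*\alpha_M) \;=\; \ip{Y_N}\dr\alpha_N.
\]
Adding these two identities gives the one-form component of the Dorfman-bracket relation, and combined with the vector-field part this proves $\llbracket e_N,f_N\rrbracket \sim_\iota \llbracket e_M,f_M\rrbracket$ for the Dorfman bracket.

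For the Courant bracket, the only additional piece is the symmetrising term $\tfrac{1}{2}\dr(\alpha(Y)-\beta(X))$. But $\iota^*(\alpha_M(Y_M)) = (\iota^*\alpha_M)(Y_N) = \alpha_N(Y_N)$ on $N$, and similarly for $\beta_M(X_M)$, and $\iota^*$ commutes with $\dr$, so this extra term also pulls back correctly. No step presents a genuine obstacle; the lemma is essentially a bookkeeping verification of naturality, and the only thing to be careful about is that both notions in the definition of $\sim_\iota$ (pushforward of vector fields versus pullback of forms) interact consistently with every operation appearing in the bracket, which they do by the calculations above.
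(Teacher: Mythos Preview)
Your proof is correct and is precisely the ``easy computation'' that the paper alludes to: the paper's own proof consists only of the sentence ``This is an easy computation, see also \cite{StXu08}'', so you have simply carried out in full the naturality check the authors omit.
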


\begin{proof}
This is an easy computation, see also \cite{StXu08}.
\end{proof}

\begin{lem}\label{tensors_related_then_also_nij}
  If $M$ is a manifold, $N\subseteq M$ a submanifold and 
$\mathcal J_N:\mathsf P_N\to\mathsf P_N$
and 
$\mathcal
  J_M:\mathsf P_M\to\mathsf P_M$ two $\iota$-related vector bundle
  morphisms.  Then the generalized
  Nijenhuis tensors $\mathcal N_{\mathcal J_N}$ and $\mathcal
  N_{\mathcal J_M}$ are $\iota$-related.
\end{lem}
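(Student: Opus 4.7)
The plan is to pick suitable extensions once and then expand the generalized Nijenhuis formula term by term. Given $e_N,f_N\in\Gamma(\mathsf P_N)$, the $\iota$-relatedness of $\mathcal J_M$ and $\mathcal J_N$ supplies $e_M,f_M\in\Gamma(\mathsf P_M)$ with
$$e_N\sim_{\iota} e_M,\quad f_N\sim_{\iota} f_M,\quad \mathcal J_N e_N\sim_{\iota} \mathcal J_M e_M,\quad \mathcal J_N f_N\sim_{\iota} \mathcal J_M f_M.$$
These sections will play the role of the witnesses required by the definition of $\iota$-relatedness applied to the Nijenhuis tensors at $(e_N,f_N)$.

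Next, I would appeal to Lemma \ref{StXu08}, which says that the Dorfman bracket preserves the $\iota$-relation, to conclude
\begin{align*}
\llbracket \mathcal J_N e_N,\mathcal J_N f_N\rrbracket &\sim_{\iota} \llbracket \mathcal J_M e_M,\mathcal J_M f_M\rrbracket,\\
\llbracket e_N,f_N\rrbracket &\sim_{\iota} \llbracket e_M,f_M\rrbracket,\\
\llbracket \mathcal J_N e_N,f_N\rrbracket &\sim_{\iota} \llbracket \mathcal J_M e_M,f_M\rrbracket,\\
\llbracket e_N,\mathcal J_N f_N\rrbracket &\sim_{\iota} \llbracket e_M,\mathcal J_M f_M\rrbracket.
\end{align*}
A second application of the $\iota$-relatedness of $\mathcal J_M,\mathcal J_N$ would then transfer $\mathcal J$ through the brackets, yielding
$$\mathcal J_N^2\llbracket e_N,f_N\rrbracket\sim_{\iota} \mathcal J_M^2\llbracket e_M,f_M\rrbracket,$$
and analogous relations for $\mathcal J_N\llbracket \mathcal J_N e_N,f_N\rrbracket$ and $\mathcal J_N\llbracket e_N,\mathcal J_N f_N\rrbracket$. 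Because $\sim_{\iota}$ is evidently closed under linear combinations, summing these four relations with the correct signs gives $\mathcal N_{\mathcal J_N}(e_N,f_N)\sim_{\iota} \mathcal N_{\mathcal J_M}(e_M,f_M)$, which is exactly what is required.

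The main obstacle I expect is guaranteeing that the $\iota$-relatedness of $\mathcal J_M$ and $\mathcal J_N$ actually propagates to the bracket-derived sections. The definition only asserts, for each section of $\mathsf P_N$, the existence of \emph{one} $\iota$-related extension whose image under $\mathcal J_M$ extends the $\mathcal J_N$-image; it is not a priori obvious that the particular extensions $\llbracket e_M,f_M\rrbracket$, $\llbracket \mathcal J_M e_M,f_M\rrbracket$, etc., that arise on the $M$-side have this property. One must therefore upgrade the hypothesis to a pointwise compatibility on $N$: any two $\iota$-related extensions of the same section differ, over $N$, by a form in $(TN)^\circ$, so the content of the argument is that $\mathcal J_M$ respects this indeterminacy. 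Once this fiberwise compatibility is settled, the remainder of the proof reduces to a routine bookkeeping with Lemma \ref{StXu08}.
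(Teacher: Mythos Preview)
Your expansion is precisely what the paper's one-line proof (``This follows immediately from Lemma~\ref{StXu08} and the definition'') intends, so the approaches coincide.

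The obstacle you flag in your final paragraph is genuine, and the paper glosses over it as well. However, the resolution you propose---that $\mathcal J_M$ must respect the indeterminacy, i.e.\ send $0\oplus(TN)^\circ$ into itself along $N$---does \emph{not} follow from the existence-only definition of $\iota$-relatedness given just before Lemma~\ref{StXu08}. For instance, take $M=\R^2$, $N=\R\times\{0\}$, $\mathcal J_N=0$, and let $\mathcal J_M$ send $(0,dy)$ to $(\partial_x,0)$ while annihilating $\partial_x,\partial_y,dx$. Every section $e_N$ admits an extension with vanishing $dy$-component, on which $\mathcal J_M$ is zero, so $\mathcal J_N$ and $\mathcal J_M$ are $\iota$-related in the paper's sense; yet $\mathcal J_M$ moves $0\oplus(TN)^\circ$ into $TN\oplus 0$. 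Thus the step ``a second application of the $\iota$-relatedness of $\mathcal J_M,\mathcal J_N$ would then transfer $\mathcal J$ through the brackets'' cannot be justified from the stated hypothesis alone: the definition hands you \emph{some} extension compatible with $\mathcal J$, not the particular one $\llbracket e_M,f_M\rrbracket$ produced by Lemma~\ref{StXu08}. In the only place the paper invokes this lemma (the proof of Theorem~\ref{A_of_nij_and_pairing}), the maps $\oplie(\mathcal J)$ and $\T\mathcal J$ are built from $T\mathcal J$ and its restriction $\mathsf A(\mathcal J)$, so the stronger pointwise compatibility holds by construction and the gap does not affect the main results; but the lemma as stated would need either the stronger ``for all extensions'' hypothesis or an additional argument.
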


\begin{proof}
This follows immediately from Lemma \ref{StXu08} and the definition.
\end{proof}

Recall that the Lie algebroid $A$ of a Lie groupoid $\Gamma\rr M$ is
an embedded 
submanifold of $T\Gamma$, $\iota:A\hookrightarrow T\Gamma$.

\begin{remark}
Note that Lemma \ref{relation_Sigma_SigmaGamma} states 
that if $u\in \Gamma_A(\mathsf A(\mathsf P_\Gamma))$ and $\tilde
u\in\Gamma_{T\Gamma}(T\mathsf P_\Gamma)$ 
is an extension of $u$, then 
\[\Sigma_\Gamma\circ u\sim_\iota \Sigma\circ \tilde u.
\]
\end{remark}

\begin{lem}\label{tensors_on_A_and_TG_related}
  Let $\Gamma\rr M$ be a Lie groupoid and $\mathcal J:\mathsf
  P_\Gamma\to\mathsf P_\Gamma$ a vector bundle morphism.  If $\mathcal J$ is multiplicative, then
\begin{enumerate}
\item $\oplie(\mathcal J):\mathsf P_A\to\mathsf P_A$ and 
 $\T\mathcal
  J:\mathsf P_{T\Gamma}\to\mathsf P_{T\Gamma}$ are $\iota$-related, and
\item  $\oplie (\mathcal N_{\mathcal J})$ and $\T\mathcal N_{\mathcal
  J}$ are $\iota$-related.
\end{enumerate}
\end{lem}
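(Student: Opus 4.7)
The plan is to reduce both parts to the Remark following Lemma \ref{relation_Sigma_SigmaGamma}, which asserts that a section $u$ of $\mathsf A(\mathsf P_\Gamma)\to A$ and any extension $\tilde u$ of $u$ to a section of $T\mathsf P_\Gamma\to T\Gamma$ satisfy $\Sigma_\Gamma\circ u \sim_\iota \Sigma\circ\tilde u$. The bridge between $\oplie(\J)$ (built from $\mathsf A(\J)$) and $\T\J$ (built from $T\J$) is then the observation that multiplicativity of $\J$ forces $\mathsf A(\J)$ to coincide with the restriction of $T\J$ to the subbundle $\mathsf A(\mathsf P_\Gamma)\subseteq T\mathsf P_\Gamma$. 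This is the standard fact that the Lie functor on a groupoid morphism is just the tangent map restricted to the source-tangent bundle along the units; indeed $\mathsf A(\mathsf P_\Gamma)=T^{\T\s}_U\mathsf P_\Gamma$ and $T\J$ automatically preserves units and source-tangent vectors whenever $\J$ is multiplicative.

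For part (1), I would proceed as follows. Start with an arbitrary section $e_A$ of $\mathsf P_A$. Since $\Sigma_\Gamma$ is an isomorphism of vector bundles over $A$, write $e_A=\Sigma_\Gamma\circ u$ for a unique section $u\in\Gamma(\mathsf A(\mathsf P_\Gamma))$, and then extend $u$ (locally, or globally via a tubular neighborhood of $A\hookrightarrow T\Gamma$ together with a partition of unity) to a section $\tilde u$ of $T\mathsf P_\Gamma\to T\Gamma$. Set $e_{T\Gamma}:=\Sigma\circ\tilde u$; the Remark gives $e_A\sim_\iota e_{T\Gamma}$ immediately. Applying $T\J$ to $\tilde u$ produces a section of $T\mathsf P_\Gamma$ whose restriction to $A$ equals $\mathsf A(\J)\circ u$ by multiplicativity. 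Using the defining diagrams
\[\oplie(\J)\circ\Sigma_\Gamma=\Sigma_\Gamma\circ\mathsf A(\J),\qquad
\T\J\circ\Sigma=\Sigma\circ T\J,\]
one has $\oplie(\J)(e_A)=\Sigma_\Gamma\circ(\mathsf A(\J)\circ u)$ and $\T\J(e_{T\Gamma})=\Sigma\circ(T\J\circ\tilde u)$. A second application of the Remark, now to the section $\mathsf A(\J)\circ u$ with extension $T\J\circ\tilde u$, yields $\oplie(\J)(e_A)\sim_\iota \T\J(e_{T\Gamma})$, which is the desired $\iota$-relatedness.

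Part (2) is entirely parallel. By Theorem \ref{nijenhuis_also_mult}, the Nijenhuis tensor $\mathcal N_\J:\mathsf P_\Gamma\times_\Gamma\mathsf P_\Gamma\to\mathsf P_\Gamma$ is itself a Lie groupoid morphism, so multiplicativity gives $\mathsf A(\mathcal N_\J)=T\mathcal N_\J$ restricted to $\mathsf A(\mathsf P_\Gamma)\times_A\mathsf A(\mathsf P_\Gamma)$. Repeat the argument of part (1) with two arguments: given sections $e_A,f_A$ of $\mathsf P_A$, write them as $\Sigma_\Gamma\circ u$, $\Sigma_\Gamma\circ v$, extend $u,v$ to $\tilde u,\tilde v$ over $T\Gamma$, and set $e_{T\Gamma}:=\Sigma\circ\tilde u$, $f_{T\Gamma}:=\Sigma\circ\tilde v$. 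The defining squares of $\oplie(\mathcal N_\J)$ and $\T\mathcal N_\J$, together with the Remark applied once to match inputs and once to match outputs, deliver $\oplie(\mathcal N_\J)(e_A,f_A)\sim_\iota \T\mathcal N_\J(e_{T\Gamma},f_{T\Gamma})$.

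The main technical obstacle is purely bookkeeping: one must verify carefully that $T\J$ restricts to $\mathsf A(\J)$ as a map of the submanifold $\mathsf A(\mathsf P_\Gamma)\subseteq T\mathsf P_\Gamma$ into itself, and similarly for $T\mathcal N_\J$. This is where multiplicativity is truly used. Everything else is diagram chasing: the Remark handles the $\Sigma$ versus $\Sigma_\Gamma$ mismatch twice (once for the input sections and once for their images), and no new analysis is needed beyond the ability to extend sections across the embedded submanifold, which is standard.
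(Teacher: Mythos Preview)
Your proposal is correct and follows essentially the same route as the paper's own proof: write $e_A=\Sigma_\Gamma\circ u$, extend $u$ to $\tilde u$ over $T\Gamma$, set $e_{T\Gamma}=\Sigma\circ\tilde u$, invoke the Remark (Lemma~\ref{relation_Sigma_SigmaGamma}) to get $e_A\sim_\iota e_{T\Gamma}$, and then use that $\mathsf A(\J)$ is the restriction of $T\J$ to $\mathsf A(\mathsf P_\Gamma)\subseteq T\mathsf P_\Gamma$ together with the defining squares for $\oplie(\J)$ and $\T\J$ to conclude. Your explanation is in fact more explicit than the paper's about why multiplicativity is needed (namely, to guarantee that $T\J$ and $T\mathcal N_\J$ restrict to $\mathsf A(\J)$ and $\mathsf A(\mathcal N_\J)$ on the subbundle $\mathsf A(\mathsf P_\Gamma)$), which the paper simply records as ``by construction of $\mathsf A(\mathcal J)$''.
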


\begin{proof}
\begin{enumerate}
\item Choose a section $e_A$ of $\mathsf P_A$. Then we have
  $\Sigma_\Gamma\inv(e_A)=:u\in\Gamma(\mathsf A(\mathsf P_\Gamma))$
  and since $\mathsf A(\mathsf P_\Gamma)\subseteq T\mathsf P_\Gamma$, 
we find a section $\tilde u$ of $T\mathsf P_\Gamma$ such that $\tilde
u$ restricts to $u$. Set $e_{T\Gamma}:=\Sigma\circ\tilde u$. By Lemma \ref{relation_Sigma_SigmaGamma}, 
we have then $e_A\sim_{\iota}e_{T\Gamma}$. Furthermore, by
construction of $\mathsf A(\mathcal J)$, we know that 
$\mathsf A(\mathcal J)\circ u =(T\mathcal J)\circ \tilde
u$ on  $TM\oplus A^*=TM\oplus TM^\circ\subseteq
\mathsf P_\Gamma$. 
We have then
\[\oplie(\mathcal J)(e_A)=\Sigma_\Gamma\circ \mathsf A(\mathcal
J)\circ u 
\sim_{\iota}\Sigma\circ T\mathcal J\circ \tilde u=\T\mathcal J(e_{T\Gamma}).
\]
\item By definition of $\oplie(\mathcal N_{\mathcal J})$, this can be shown in the same manner.\qedhere
\end{enumerate}
\end{proof}

\begin{proof}[Proof of Theorem \ref{A_of_nij_and_pairing}]
\begin{enumerate}
\item The map $\oplie(\mathcal J)$
is $\langle\cdot\,,\cdot\rangle_{A}$-orthogonal
if and only if 
\[\langle\cdot\,,\cdot\rangle_{A}\circ(\oplie(\J),\oplie(\J))
=\langle\cdot\,,\cdot\rangle_{A}.
\]
We have 
\[\langle\cdot\,,\cdot\rangle_{A}=\oplie(\langle\cdot\,,\cdot\rangle_{\Gamma})
=\mathsf A(\langle\cdot\,,\cdot\rangle_{\Gamma})\circ (\Sigma_\Gamma\inv,\Sigma_\Gamma\inv)\]
by definition and
\begin{align*}
&\langle\cdot\,,\cdot\rangle_{A}\circ(\oplie(\J),\oplie(\J))\\
=\,&\mathsf A(\langle\cdot\,,\cdot\rangle_{\Gamma})\circ (\Sigma_\Gamma\inv,\Sigma_\Gamma\inv)
\circ(\Sigma_\Gamma,\Sigma_\Gamma)\circ(\mathsf A(\J),\mathsf A(\J))\circ (\Sigma_\Gamma\inv,\Sigma_\Gamma\inv)\\
=\,&\mathsf A\bigl(\langle\cdot\,,\cdot\rangle_{\Gamma}\circ(\J,\J)\bigr)\circ (\Sigma_\Gamma\inv,\Sigma_\Gamma\inv).
\end{align*}
Since $\Sigma_\Gamma:\mathsf A(\mathsf P_\Gamma)\to \mathsf P_A$
is an isomorphism, 
we get that
$\oplie(\mathcal J)$
is $\langle\cdot\,,\cdot\rangle_{A}$-orthogonal
if and only if 
\begin{align*}
\mathsf A(\langle\cdot\,,\cdot\rangle_{\Gamma})=\mathsf A\bigl(\langle\cdot\,,\cdot\rangle_{\Gamma}\circ(\J,\J)\bigr)
\end{align*}
and we can conclude.
\item 
 Choose 
sections $e_A, f_A\in\Gamma(\mathsf P_A)$ and 
$u,v\in \Gamma_A(\mathsf A(\mathsf P_{\Gamma}))$ 
such that $e_A=\Sigma_\Gamma\circ u$, $f_A=\Sigma_\Gamma\circ v$.
Choose as in the proof of Lemma \ref{tensors_on_A_and_TG_related}
two extensions $\tilde u$ and $\tilde v\in\Gamma_{T\Gamma}(T\mathsf P_\Gamma)$ 
of $u$ and $v$ and set $e_{T\Gamma}:=\Sigma\circ\tilde u$ and
$f_{T\Gamma}:=\Sigma\circ\tilde v\in\Gamma(\mathsf P_{T\Gamma})$. Then we have 
\[ e_A\sim_{\iota} e_{T \Gamma}\qquad f_A\sim_{\iota}f_{T\Gamma},\]
\begin{equation}\label{equation123}
\oplie(\mathcal J)(e_A)\sim_{\iota} \T\mathcal J(e_{T
  \Gamma})\qquad \oplie(\mathcal J)(f_A)\sim_{\iota}\T\mathcal J(f_{T\Gamma})
\end{equation}
and 
\[\oplie(\mathcal N_{\mathcal J})(e_A, f_A)\sim_{\iota} \T\mathcal
N_{\mathcal J}(e_{T \Gamma}, f_{T\Gamma}).\]
But by Lemma \ref{tensors_related_then_also_nij}, \eqref{equation123} yields also 
\[ \mathcal N_{\oplie(\mathcal J)}(e_A, f_A)\sim_{\iota} \mathcal
N_{\T\mathcal J}(e_{T \Gamma}, f_{T\Gamma}).\]
 Since $\mathcal N_{\T \mathcal J}=\T\mathcal N_{\mathcal J}$
by Theorem \ref{thm_tangent_nijenhuis}, 
we get that 
\[ \oplie(\mathcal N_{\mathcal J})(e_A, f_A)\sim_{\iota}  \mathcal
N_{\T\mathcal J}(e_{T \Gamma}, f_{T\Gamma})
\quad 
\text{ and } \quad 
\mathcal N_{\oplie(\mathcal J)}(e_A, f_A)\sim_{\iota} \mathcal
N_{\T\mathcal J}(e_{T \Gamma}, f_{T\Gamma}).
\]
This yields
\[\oplie(\mathcal N_{\mathcal J})(e_A, f_A)=\mathcal N_{\oplie(\mathcal J)}(e_A, f_A)
\]
and the proof is complete.\qedhere
\end{enumerate}
\end{proof}

The following corollary is immediate.
\begin{cor}\label{lem_eq_nijenhuis}
Let $\mathcal J:\mathsf P_\Gamma\to \mathsf P_\Gamma$ be a vector
bundle morphism and a Lie
groupoid morphism.
Then 
\[\mathcal N_{\oplie(\mathcal J)}=0 \quad \text{ if and only if }\quad 
\mathcal N_{\J}=0.\]
\end{cor}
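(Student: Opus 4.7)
The plan is to derive this corollary as a straightforward consequence of the formula $\oplie(\mathcal N_{\mathcal J})=\mathcal N_{\oplie(\mathcal J)}$ established in Theorem \ref{A_of_nij_and_pairing}(2), combined with the multiplicativity of the Nijenhuis tensor from Theorem \ref{nijenhuis_also_mult} and the Lie-II uniqueness principle for groupoid morphisms with prescribed infinitesimal data.

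The easy direction is essentially formal. If $\mathcal N_{\mathcal J}=0$ identically, then because $\oplie$ is defined by conjugating the Lie functor $\mathsf{A}$ by the bundle isomorphism $\Sigma_\Gamma$, it must send the zero morphism to the zero morphism. Applying Theorem \ref{A_of_nij_and_pairing}(2) then yields $\mathcal N_{\oplie(\mathcal J)}=\oplie(\mathcal N_{\mathcal J})=0$.

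For the converse, suppose $\mathcal N_{\oplie(\mathcal J)}=0$. Theorem \ref{A_of_nij_and_pairing}(2) immediately gives $\oplie(\mathcal N_{\mathcal J})=0$, and since $\Sigma_\Gamma$ is a bundle isomorphism this is equivalent to $\mathsf{A}(\mathcal N_{\mathcal J})=0$. Now by Theorem \ref{nijenhuis_also_mult} the Nijenhuis tensor $\mathcal N_{\mathcal J}$ is itself a Lie groupoid morphism $\mathsf P_\Gamma\times_\Gamma\mathsf P_\Gamma\to\mathsf P_\Gamma$. Since the zero morphism has the same induced Lie algebroid morphism, the uniqueness part of Lie's second theorem for groupoid morphisms forces $\mathcal N_{\mathcal J}$ and $0$ to agree on every source-connected component of the domain. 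Under the source-connectedness assumption on $\Gamma$ operative throughout the integration theorem, this yields $\mathcal N_{\mathcal J}=0$ globally.

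The sole bookkeeping point — rather than a genuine obstacle — is the appeal to Lie-II in the converse direction, which requires source-connectedness of the domain $\mathsf P_\Gamma\times_\Gamma\mathsf P_\Gamma$. This is automatic once $\Gamma$ is source-connected, because the source fibers of $\mathsf P_\Gamma\times_\Gamma\mathsf P_\Gamma$ are vector bundles over the source fibers of $\Gamma$, hence connected.
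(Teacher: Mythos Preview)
Your argument is correct and matches the paper's intent: the paper simply declares the corollary ``immediate'' from Theorem~\ref{A_of_nij_and_pairing}(2), and you have spelled out precisely the two-line deduction in each direction. Your explicit invocation of Lie-II uniqueness for the converse, together with the observation that source-connectedness of $\mathsf P_\Gamma\times_\Gamma\mathsf P_\Gamma$ follows from that of $\Gamma$, is a genuine clarification --- the paper does not state a connectedness hypothesis in the corollary itself, though it is in force in Theorem~\ref{thm:main} where the corollary is applied, so your caveat is well placed. One small remark: Theorem~\ref{A_of_nij_and_pairing}(2) is phrased ``in that case'', nominally under the orthogonality assumption of part~(1), but its proof makes no use of orthogonality, so your appeal to it for a general $\mathcal J$ is legitimate.
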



\subsection{Proof of the integration theorem}
{\it Proof of Theorem \ref{thm:main}}

By Lemma \ref{lie_is_vb_hom}, the map 
$\oplie(\J):\mathsf P_A\to \mathsf P_A$ is a vector bundle
morphism.
Since $\mathcal J^2=-\Id_{\mathsf P_\Gamma}$, we have  using Lemma
\ref{T_of_id_and_square}:
\[\left(\oplie(\mathcal J)\right)^2
=-\Id_{\mathsf P_A}.
\] 
By Corollary \ref{lem_eq_nijenhuis} and Theorem \ref{A_of_nij_and_pairing}, we get
\[\mathcal N_{\oplie(\J)}=0\]
and  
\[\oplie(\J) \text{ is } \langle\cdot\,,\cdot\rangle_A\text{-orthogonal.}\]

Since 
\begin{equation*}
\begin{xy}
\xymatrix{
\mathsf A(\mathsf P_\Gamma)\ar[r]^{\mathsf A(\mathcal J)}\ar[d]& \mathsf A(\mathsf P_\Gamma)\ar[d]\\
TM\oplus A^*\ar[r]&TM\oplus A^*
}
\end{xy}
\end{equation*} is a Lie algebroid morphism
and 
\begin{equation*}
\begin{xy}
\xymatrix{
\mathsf A(\mathsf P_\Gamma)\ar[r]^{\Sigma\an{\mathsf A(\mathsf P_\Gamma)}}\ar[d]& \mathsf P_A\ar[d]\\
TM\oplus A^*\ar[r]&TM\oplus A^*
}
\end{xy}
\end{equation*}
is a Lie algebroid isomorphism over the identity,
the map 
\[\oplie(\J)=\Sigma\an{\mathsf A(\mathsf P_\Gamma)}\circ 
\mathsf A(\mathcal J)\circ \left(\Sigma\an{\mathsf A(\mathsf P_\Gamma)}\right)\inv
=\Sigma\an{\mathsf A(\mathsf P_\Gamma)}\circ 
\mathsf A(\mathcal J)\circ \Sigma\inv\an{\mathsf P_A},\]
\begin{equation*}
\begin{xy}
\xymatrix{
\mathsf P_A\ar[r]^{\oplie(\mathcal J)}\ar[d]& \mathsf P_A\ar[d]\\
TM\oplus A^*\ar[r]&TM\oplus A^*
}
\end{xy}
\end{equation*}
is a Lie algebroid morphism.

For the second part,  consider the map 
\[A_\J:=\Sigma\inv\an{\mathsf P_A}\circ \mathcal J_A\circ \Sigma\an{\mathsf A(\mathsf P_\Gamma)}:
\mathsf A(\mathsf P_\Gamma)\to \mathsf A(\mathsf P_\Gamma).\]
Since $\mathcal J_A:\mathsf P_A\to \mathsf P_A$ is a Lie algebroid
morphism, 
$A_\J$ is a Lie algebroid morphism and there 
is a unique 
Lie groupoid morphism
$\mathcal J:\mathsf P_\Gamma\to \mathsf P_\Gamma$
such that $A_\J=\mathsf A(\J)$. 
By Lemma \ref{lie_is_vb_hom}, $\J$ is a morphism of vector bundles.

We get then immediately $\J_A=\oplie(\J)$.
Since $\J_A^2=-\Id_A$, we get 
$\oplie(\J^2)=-\Id_A=\oplie(-\Id_{\mathsf P_\Gamma})$
and we can conclude by Theorem \ref{A_of_nij_and_pairing}.

This concludes the proof of Theorem \ref{thm:main}.

\section{Application}\label{hol_Poisson_gpd}

\subsection{Holomorphic Lie bialgebroids}
Holomorphic Lie algebroids were studied for various purposes in
the literature. See \cite{Boyom05, EvLuWe99, LaStXu08, Huebschmann99, Weinstein07} 
and references cited there for details. 

By definition, a holomorphic Lie algebroid is a holomorphic vector bundle $A\to X$,
equipped with a holomorphic bundle map $A\xrightarrow{\rho}TX$,
called the anchor map, and a structure of sheaf of complex Lie algebras on $\shs{A}$,
such that
\begin{enumerate}
\item the anchor map $\rho$ induces a homomorphism of sheaves
of complex Lie algebras from $\shs{A}$ to $\Theta_X$;
\item and the Leibniz identity
\[ \lie{V}{fW}=\big(\rho(V)f\big) W+f\lie{V}{W} \]
holds for all $V,W\in\shs{A}(U)$, $f\in\hf{X}(U)$ and
  all open subsets $U$ of $X$.
\end{enumerate}
Here  $\shs{A}$ is  the sheaf of holomorphic sections  of $A\to X$
and $\Theta_X$ denotes   the sheaf of holomorphic vector fields
on $X$.

By forgetting the complex structure,
a holomorphic vector bundle $A \to X $ becomes a real (smooth) vector bundle,
and a holomorphic vector bundle map $\rho: A \to TX $
becomes a real (smooth) vector bundle map.
Assume that $A\to X$ is  a holomorphic vector bundle
 whose underlying real vector
bundle is endowed with a Lie algebroid structure $(A,\rho,\lie{\cdot}{\cdot})$
such that, for any open subset $U\subset X$,
\begin{enumerate}
\item $\lie{\shs{A}(U)}{\shs{A}(U)}\subset\shs{A}(U)$
\item and the restriction of the Lie bracket
$\lie{\cdot}{\cdot}$ to $\shs{A}(U)$ is $\CC$-linear.
\end{enumerate}
Then the restriction of $\lie{\cdot}{\cdot}$ and $\rho$
from $\shs{A}_\infty$ to $\shs{A}$ makes $A$ into
  a holomorphic Lie algebroid, where $\shs{A}_\infty$  denotes
   the sheaf of smooth sections   of $A\to X$.

The following proposition states that any holomorphic Lie algebroid
can be obtained of such a real Lie algebroid in a unique way.

\begin{prop}[\cite{LaStXu08}]
\label{prop:extension}
Given a structure of holomorphic Lie algebroid on the holomorphic
vector bundle $A\to X$ with anchor map $A\xrightarrow{\rho}TX$,
there exists a unique structure of real smooth Lie algebroid on
the vector bundle $A\to X$ with respect to the same anchor map
$\rho$ such that the inclusion of sheaves $\shs{A}\subset\shs{A}_\infty$
is a morphism of sheaves of Lie algebras.
\end{prop}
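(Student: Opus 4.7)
The plan is to work locally in a holomorphic frame and use the fact that the Leibniz rule propagates the bracket from holomorphic sections to all smooth sections.

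For uniqueness, suppose two real smooth Lie algebroid structures $[\cdot\,,\cdot]_1$ and $[\cdot\,,\cdot]_2$ on $A\to X$ (with common anchor $\rho$) both restrict to the given holomorphic bracket on $\shs{A}$. Choose an open $U\subset X$ admitting a holomorphic frame $\{e_1,\dots,e_n\}$ of $A$. Every smooth section of $A$ over $U$ can be written uniquely as $s=\sum_i h_i e_i$ with $h_i\in C^\infty(U,\CC)$. Since $[e_i,e_j]_1=[e_i,e_j]=[e_i,e_j]_2$ by hypothesis and since both brackets satisfy the Leibniz rule (which is $C^\infty(\cdot,\CC)$-Leibniz, as each bracket is $\RR$-bilinear and commutes with multiplication by $\imaginary$), both brackets are forced to equal the same explicit expression in the $h_i$, the holomorphic structure functions $c_{ij}^k$ determined by $[e_i,e_j]=\sum_k c_{ij}^k e_k$, and the action of $\rho(e_i)$ on smooth complex functions. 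Hence $[\cdot\,,\cdot]_1=[\cdot\,,\cdot]_2$ on $U$, and by covering $X$ by such opens, globally.

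For existence, cover $X$ by opens $U_\alpha$ carrying holomorphic frames, and on each $U_\alpha$ define the candidate bracket by the Leibniz-extension formula
\[
\left[\sum_i f_i e_i,\;\sum_j h_j e_j\right]_R
= \sum_{i,j} f_i h_j [e_i,e_j]
+ \sum_{i,j} f_i\,\rho(e_i)(h_j)\, e_j
- \sum_{i,j} h_j\,\rho(e_j)(f_i)\, e_i.
\]
Here $\rho(e_i)$ is a holomorphic vector field on $X$, viewed as a real smooth vector field acting $\CC$-linearly on $C^\infty(U_\alpha,\CC)$, so the formula makes sense. The checks to perform are: (i) independence of the chosen holomorphic frame (so that local definitions patch to a global bracket); (ii) the Jacobi identity; (iii) compatibility with the anchor, i.e.\ $\rho([s,s']_R)=[\rho(s),\rho(s')]$; (iv) smoothness, which is immediate from the explicit formula; and finally (v) that the resulting bracket restricts to the original holomorphic bracket on $\shs{A}$, which is clear since if the $f_i,h_j$ are holomorphic then $\rho(e_i)(h_j)$ and $\rho(e_j)(f_i)$ agree with the holomorphic derivations, and one recovers the original formula.

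The main technical obstacle is the Jacobi identity (step (ii)); independence of frame (step (i)) is really a special case since different holomorphic frames are related by holomorphic transition matrices, and the Leibniz formula is tautologically preserved by substitution when the transition functions are holomorphic (so that $\rho(e_i)$ acts on them in the same way in both the holomorphic and the extended calculus). For Jacobi, one expands $[[s_1,s_2]_R,s_3]_R+\mathrm{cyc.}$ with $s_\ell=\sum h_\ell^i e_i$; using the Leibniz formula, all terms that involve at most one derivative of the $h_\ell^i$ reassemble into $\sum h_1^i h_2^j h_3^k\bigl([[e_i,e_j],e_k]+\mathrm{cyc.}\bigr)$, which vanishes by the holomorphic Jacobi identity on $\shs{A}$, while the terms with two derivatives organize into cyclic sums of commutators $[\rho(e_i),\rho(e_j)](h_3^k)-\rho([e_i,e_j])(h_3^k)$ that cancel using the anchor compatibility $\rho([e_i,e_j])=[\rho(e_i),\rho(e_j)]$ (known on holomorphic vector fields and extended to smooth complex functions by $\CC$-linear continuation). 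The anchor compatibility (iii) reduces similarly by $C^\infty$-bilinearity to the identity on frame elements, which holds in the holomorphic setting.
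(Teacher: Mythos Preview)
The paper itself does not supply a proof of this proposition; it is quoted from \cite{LaStXu08}. Your strategy---choose a local holomorphic frame and force the bracket by the Leibniz rule---is the natural one.

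There is, however, a genuine gap. In the uniqueness argument you assert that any real Lie algebroid bracket extending the holomorphic one ``commutes with multiplication by $\imaginary$'' and is therefore $C^\infty(\cdot,\CC)$-Leibniz. This is false: for real $f$ one computes
\[
[fV,jW]-j[fV,W]=-(J\rho(W))(f)\,V+\rho(W)(f)\,jV,
\]
which does not vanish in general, so no real Lie algebroid bracket with anchor $\rho$ can satisfy $[V,jW]=j[V,W]$ identically on smooth sections. The same issue breaks your existence formula: taking $V=\imaginary\, e_1$, $W=e_2$ and $g\in C^\infty(U,\RR)$, your $[\cdot,\cdot]_R$ gives $[V,gW]_R-g[V,W]_R=\rho(e_1)(g)\,je_2$, whereas the real Leibniz rule with anchor $\rho$ demands $\rho(V)(g)\,W=(J\rho(e_1))(g)\,e_2$; these are different sections.

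The repair is short. Observe that $je_k=\imaginary\, e_k$ is again a holomorphic section, so $\{e_1,\dots,e_n,je_1,\dots,je_n\}$ is a \emph{real} smooth frame of $A|_U$ consisting entirely of holomorphic sections. For uniqueness, the two candidate real brackets agree on every pair from this real frame (since both restrict to the holomorphic bracket), and the \emph{real} Leibniz rule with real coefficient functions then forces agreement on all smooth sections. For existence, define the bracket on this real frame by the holomorphic bracket and extend $\RR$-bilinearly by the real Leibniz rule; your Jacobi and anchor-compatibility expansions then go through verbatim once rewritten with real coefficients, and frame-independence follows because any two holomorphic frames are related by a holomorphic $\mathrm{GL}_n(\CC)$-valued transition, which in the real picture is a $C^\infty(\cdot,\RR)$-linear change of the real frame $\{e_k,je_k\}$.
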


By $A_R$, we   denote the underlying real Lie algebroid of a holomorphic
 Lie algebroid $A$.  
In the sequel,  by saying that a real Lie algebroid is a holomorphic Lie algebroid,
we mean that it is a holomorphic vector bundle and
its Lie bracket on smooth sections induces a $\CC$-linear
bracket on $\shs{A}(U)$, for all open subset $U\subset X$.

Assume that $(A\to X,\rho,\lie{\cdot}{\cdot})$ is
a holomorphic Lie algebroid. Consider
the bundle map $j: A \to A$ defining
the fiberwise complex structure on $A$. It is simple to see that
 the Nijenhuis torsion of $j$ vanishes \cite{LaStXu08}.
Hence one can define a new (real) Lie algebroid structure on
$A$, denoted by $(A\to X,\rho_{j},\lie{\cdot}{\cdot}_j)$,
where the anchor $\rho_{j}$ is $\rho\rond j$ and
the bracket on $\sections{A}$ is
given by \cite{Kosmann96}
\begin{equation}\label{bracket_on_AI}
 \lie{V}{W}_{j}=\lie{jV}{W}+\lie{V}{jW}-j\lie{V}{W}=-j\lie{jV}{jW}, \qquad\forall V,W\in\sections{A} .
\end{equation}

In the sequel, $(A\to X,\rho_{j},\lie{\cdot}{\cdot}_j)$
will be called the \emph{underlying imaginary Lie algebroid}
and denoted by $A_I$. It is known that
\begin{equation}
j: A_I\to A_R
\end{equation}
 is a Lie algebroid isomorphism \cite{Kosmann96}.

\begin{defn}\label{def:bialg}
A holomorphic Lie bialgebroid is a pair of  holomorphic
Lie algebroids  $(A, A^*)$  over the base $M$ such that  for any open subset $U$
of $M$, the following compatibility holds
\begin{equation}
\label{eq:bialgebroid}
 d_* [X, Y]=[d_* X, Y]+[X,  d_* Y].
\end{equation}
for all $X, Y\in\shs{A}(U)$.
\end{defn}

Given  a holomorphic vector bundle $A\to M$,   
by $\shs{A}^k$ we denote the sheaf of holomorphic sections of
$\wedge^k A\to M$. Then, similar to  the smooth case, $A$ is a holomorphic
Lie algebroid if and only if there is 
a sheaf of Gerstenhaber algebras  $(\shs{A}^\com, \wedge, [\cdot, \cdot])$ over $M$ \cite{LaStXu07}.

\begin{prop}
Let $A\to M$ be a holomorphic vector bundle.
Then $(A, A^*)$ is a holomorphic Lie bialgebroid if and only if 
there is a sheaf of differential Gerstenhaber algebras 
$(\shs{A}^\com, \wedge, [\cdot, \cdot], d_* )$ over $M$.
\end{prop}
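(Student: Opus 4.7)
The plan is to establish the holomorphic counterpart of Kosmann--Schwarzbach's classical theorem (smooth version: $(A,A^*)$ is a Lie bialgebroid iff $(\Gamma(\wedge^\bullet A), \wedge, [\cdot,\cdot], d_*)$ is a differential Gerstenhaber algebra), and then transfer it to sheaves of holomorphic sections. The previous proposition already gives the Gerstenhaber-algebra half of the correspondence for a single holomorphic Lie algebroid, so the new content concerns how the second Lie algebroid structure and the compatibility condition \eqref{eq:bialgebroid} translate into a differential on the sheaf of Gerstenhaber algebras.

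For the forward direction $(\Rightarrow)$, I would start from a holomorphic Lie bialgebroid $(A,A^*)$. The preceding proposition produces the sheaf of Gerstenhaber algebras $(\shs{A}^\com,\wedge,[\cdot,\cdot])$ from $A$. From the holomorphic Lie algebroid $A^*$ I would construct a degree-one $\CC$-linear operator $d_*:\shs{A}^\com\to\shs{A}^{\com+1}$ as the holomorphic Chevalley--Eilenberg differential, using the anchor $\rho_*:A^*\to TX$ on holomorphic functions and the holomorphic bracket on $\shs{A^*}$ on generators in degree $1$; $d_*^2=0$ follows from the holomorphic Jacobi identity on $A^*$. Derivation property of $d_*$ with respect to $\wedge$ is automatic from the CE formula. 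The remaining point is that $d_*$ is a graded derivation of $[\cdot,\cdot]$: since both sides of
\[
d_*[X,Y] - [d_*X,Y] - (-1)^{|X|+1}[X,d_*Y] = 0
\]
are biderivations of the wedge product in each argument, it suffices to check it on generators, namely on holomorphic functions and on $\shs{A}(U)$. On pairs from $\shs{A}(U)$ this is exactly \eqref{eq:bialgebroid}, while the mixed function case reduces to the compatibility between $\rho_*$, the anchor $\rho$, and the bracket, which is encoded in the Lie algebroid axioms for $A^*$.

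For the converse $(\Leftarrow)$, the Gerstenhaber part of the data gives the holomorphic Lie algebroid structure on $A$ via the preceding proposition. The differential $d_*$, being a degree-one square-zero derivation of $(\shs{A}^\com,\wedge)$, dually encodes a holomorphic Lie algebroid structure on $A^*$: the anchor $\rho_*:A^*\to TX$ is read off from the action of $d_*$ on $\hf{X}$, and the bracket $[\xi,\eta]_*$ on $\shs{A^*}(U)$ is defined by the usual Cartan-type formula
\[
[\xi,\eta]_*(X) = \rho_*(\xi)\lbag\eta,X\rbag - \rho_*(\eta)\lbag\xi,X\rbag - d_*X(\xi,\eta),
\]
which is forced by requiring $d_*$ to be the CE-differential of the candidate bracket; $d_*^2=0$ yields the holomorphic Jacobi identity. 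Finally, the derivation property of $d_*$ with respect to $[\cdot,\cdot]$, restricted to $X,Y\in\shs{A}(U)$, gives exactly \eqref{eq:bialgebroid}.

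The main obstacle is the sheaf-theoretic passage: one must check that a derivation defined only on \emph{holomorphic} sections of $\wedge^\bullet A$ indeed determines a bona-fide holomorphic Lie algebroid structure on $A^*$, rather than merely a structure on the sheaf level. This uses Proposition \ref{prop:extension} (from \cite{LaStXu08}) to uniquely extend the $\CC$-linear holomorphic data on $\shs{A^*}$ to a smooth real Lie algebroid on $A^*$ whose bracket restricts $\CC$-linearly to holomorphic sections. All other verifications are local and are direct transcriptions of Kosmann--Schwarzbach's original argument in the smooth category, applied on a trivializing cover of $X$.
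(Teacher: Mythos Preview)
Your approach matches the paper's: both reduce to Kosmann--Schwarzbach's smooth argument, transported to sheaves of holomorphic sections. The paper is terser---it sketches only the forward direction and declares the converse ``obvious''---whereas you spell out the converse, including the extraction of the $A^*$-structure from $d_*$ via the Cartan formula and the appeal to Proposition~\ref{prop:extension}.

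One point in your forward direction needs correction. You claim the mixed function--section case of the derivation identity ``reduces to the compatibility between $\rho_*$, the anchor $\rho$, and the bracket, which is encoded in the Lie algebroid axioms for $A^*$.'' This is not so: the identity
\[
d_*[X,f]=[d_*X,f]+[X,d_*f]
\]
for $X\in\shs{A}(U)$ and $f\in\hf{X}(U)$ is \emph{not} a consequence of the Lie algebroid axioms for $A^*$ (or for $A$) alone; it is a genuine compatibility between the two structures, and its derivation requires the hypothesis \eqref{eq:bialgebroid}. The paper, following \cite{Kosmann95}, obtains it by substituting $Y\mapsto fY$ in \eqref{eq:bialgebroid} and expanding both sides via the Leibniz rule for $[\cdot,\cdot]$ and the derivation property of $d_*$ with respect to $\wedge$. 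Once this mixed case is established, the extension to all of $\shs{A}^\com(U)$ proceeds by the biderivation argument you indicate.
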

\begin{proof}
The proof is exactly the same as in the smooth case. See \cite{Xu99, LaStXu07, Kosmann95}.
For the completeness, we sketch a proof below.

We note that,  if $(A, A^*)$ is a holomorphic Lie bialgebroid, 
 the holomorphic Lie algebroid structure on $A^*$ 
 induces a   complex of sheaves  $d_*: \shs{A}^k\to \shs{A}^{k+1}$ over $M$.
 From the compatibility condition \eqref{eq:bialgebroid} and the Leibniz rule, one can prove
as in \cite{Kosmann95}, that 
 \begin{equation}
\label{eq:bialgebroidf}
d_* [X, f]=[d_* X, f]+[X,  d_* f].
\end{equation}
for all $X\in\shs{A}(U)$ and $f\in \hf{P}(U)$. Therefore by Leibniz rule, one
proves that 
\begin{equation}
\label{eq:bialgebroidff}
 d_* [X, Y]=[d_* X, Y]+[X,  d_* Y]
\end{equation}
for all $X, Y\in \shs{A}^\com (U)$. Thus $(\shs{A}^\com, \wedge, [\cdot, \cdot], d_* )$
is a sheaf of  differential Gerstenhaber algebras  over $M$.
The converse is obvious.
\end{proof}

Let $(A, A^*)$ be  a holomorphic Lie bialgebroid with anchor 
$\rho$ and $\rho_*$, respectively. 
The following proposition shows that the holomorphic bundle map
\begin{equation}\label{def_of_piP}
 \pi_M^\#=(\rho\circ \rho_*^*):T^*M\to
TM.
\end{equation}
is skew-symmetric and defines a  holomorphic Poisson
structure on $M$.

\begin{prop}
Let $(A, A^*)$ be  a holomorphic Lie bialgebroid with anchor
$\rho$ and $\rho_*$, respectively.  Then
\begin{enumerate}
\item $L_{df}X=-[d_*f, X]$ for any $f\in 
\hf{P}(U)$ and $X\in \shs{A}(U)$;
\item $ [d_*f, d_*g]=d_*\{f, g\}$, $\forall f, g\in \hf{P}(U)$;
\item $\rho\smalcirc \rho_*^*=-\rho_*\smalcirc \rho^*$ and
$\pi_M$ defined by  \eqref{def_of_piP} is a holomorphic 
Poisson structure on $M$.
\end{enumerate}
\end{prop}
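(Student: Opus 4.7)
The plan is to mimic the classical Mackenzie--Xu and Kosmann-Schwarzbach arguments for real Lie bialgebroids, now carried out locally over each open $U\subseteq M$ with holomorphic sections, using the sheaf of differential Gerstenhaber algebras $(\shs{A}^\bullet,\wedge,[\cdot,\cdot],d_*)$ produced by the preceding proposition. Because the classical proofs are entirely local and algebraic, once the sheaf-level data is in place the verbatim computations carry over; the only genuine content is to unpack the compatibility condition \eqref{eq:bialgebroid} and its consequence
\[
 d_*[\xi,\eta]=[d_*\xi,\eta]+(-1)^{|\xi|-1}[\xi,d_*\eta]
\]
for all $\xi,\eta\in\shs{A}^\bullet(U)$.

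For (a), I would apply the differential compatibility with $\eta=f\in\hf{M}(U)$ and $\xi=X\in\shs{A}(U)$. Since in the Gerstenhaber calculus $[X,f]=\rho(X)(f)$, the left-hand side becomes $d_*(\rho(X)f)$ while on the right $[d_*X,f]$ is the contraction of $d_*X\in\shs{A}^2(U)$ with $df$. Rewriting $d_*(L_Xf)$ via the Cartan-type identity $L_X\db d_*=[d_*,i_{\rho(X)}]+\dots$ valid in a differential Gerstenhaber algebra, and rearranging, produces $L_{df}X=-[d_*f,X]$; this is a straightforward sign-tracking exercise.

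For (b), the cleanest route is to apply (a) to $X=d_*g$, obtaining $L_{df}(d_*g)=-[d_*f,d_*g]$. On the other hand, $\rho_*(d_*f)g=\{f,g\}$ by definition of $\pi_M$, and the Cartan calculus in the sheaf of differential Gerstenhaber algebras yields $L_{df}(d_*g)=d_*(L_{df}g)$ because $d_*^2=0$. Comparing the two expressions and fixing signs gives $[d_*f,d_*g]=d_*\{f,g\}$. For (c), skew-symmetry of $\pi_M$, equivalently $\rho\smalcirc\rho_*^*=-\rho_*\smalcirc\rho^*$, can be read off by pairing $[d_*f,g]+[f,d_*g]$ with the anchors and using that $\rho,\rho_*$ respect the Gerstenhaber bracket (or equivalently by applying (a) symmetrically in $f,g$). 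The Jacobi identity for $\pi_M$ amounts to $\{\{f,g\},h\}+\text{cyclic}=0$; applying $d_*$ to this expression, using (b) to rewrite each term as a triple bracket $[[d_*f,d_*g],d_*h]$, and invoking Jacobi for the holomorphic Gerstenhaber bracket on $\shs{A}^\bullet$ plus $d_*^2=0$ closes the argument. Holomorphy of $\pi_M$ is automatic: $\rho,\rho_*$, hence $\pi_M^\#=\rho\smalcirc\rho_*^*$, are morphisms of holomorphic vector bundles.

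The main obstacle is not conceptual but notational: carefully tracking Gerstenhaber signs and distinguishing between the bracket, the Lie derivative, and contraction operators in the sheaf setting. Once these conventions are fixed consistently with those used in the previous proposition, the three statements follow by the same algebraic manipulations as in the smooth case treated in \cite{Xu99, Kosmann95, LaStXu07}, and no new analytic input beyond the differential Gerstenhaber algebra structure on $\shs{A}^\bullet$ is required.
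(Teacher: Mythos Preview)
Your approach is essentially the same as the paper's: the paper simply refers the reader to Propositions~3.4--3.6 of \cite{MaXu94} and leaves the proof to the reader, and you have sketched precisely those Mackenzie--Xu computations, carried out sheaf-locally on holomorphic sections using the differential Gerstenhaber algebra structure $(\shs{A}^\bullet,\wedge,[\cdot,\cdot],d_*)$ established in the preceding proposition. One small slip: in your argument for (b) you write $\rho_*(d_*f)g$ where you mean $\rho_*(d_Af)g$, but this is cosmetic and you correctly identify sign-tracking as the only real work.
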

\begin{proof}
The proof is similar to the proofs of Proposition 3.4, Corollary 3.5 and Proposition 
3.6  in \cite{MaXu94}, and is left to the reader.
\end{proof}

\subsection{Associated real Lie bialgebroids}

By $A_R$ and $A_I$ we denote the underlying real and imaginary
 Lie algebroids of $A$, respectively.
We write  $A_R^*:=(A^*)_R$ and $A_I^*:=(A^*)_I$ for the  underlying real
and imaginary  Lie algebroids of
the complex dual  $A^*$, respectively.

\begin{lem}
Let $A$ be a  holomorphic Lie algebroid.
For any $\alpha\in \gm (A^*)$ and $f\in C^{\infty}(M)$, we have
\begin{align*}
d^I  \alpha&=-(j^*\smalcirc d^R \smalcirc j^*)\alpha, \\
d^I f&=(j^* \smalcirc d^R)f.
\end{align*}
\end{lem}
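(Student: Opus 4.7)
The plan is to prove both identities by direct evaluation, using the explicit Koszul-type formula for a Lie algebroid differential. Recall that for any Lie algebroid $B\to M$ with anchor $\rho_B$ and bracket $[\cdot,\cdot]_B$, the differential satisfies $d_B f(V)=\rho_B(V)(f)$ and $d_B\beta(V,W)=\rho_B(V)\beta(W)-\rho_B(W)\beta(V)-\beta([V,W]_B)$. Applied to $A_R$ (with $\rho$, $[\cdot,\cdot]$) and $A_I$ (with $\rho_j=\rho\circ j$ and $[V,W]_j=-j[jV,jW]$ from \eqref{bracket_on_AI}), this reduces everything to an identity of multilinear expressions. The convention $j^*$ on a $2$-form is $(j^*\omega)(V,W)=\omega(jV,jW)$.

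First I would dispatch the function case, which is immediate: for $f\in C^\infty(M)$ and $V\in\Gamma(A)$,
\[d^I f(V)=\rho_j(V)(f)=\rho(jV)(f)=d^R f(jV)=(j^*d^R f)(V),\]
so $d^I f=j^*\circ d^R f$.

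Next, for $\alpha\in\Gamma(A^*)$ and $V,W\in\Gamma(A)$, I would expand
\[d^I\alpha(V,W)=\rho(jV)\alpha(W)-\rho(jW)\alpha(V)+\alpha\bigl(j[jV,jW]\bigr),\]
using $[V,W]_j=-j[jV,jW]$. On the other side,
\begin{align*}
-(j^*\circ d^R\circ j^*)\alpha(V,W)&=-d^R(j^*\alpha)(jV,jW)\\
&=-\rho(jV)(j^*\alpha)(jW)+\rho(jW)(j^*\alpha)(jV)+(j^*\alpha)\bigl([jV,jW]\bigr)\\
&=-\rho(jV)\alpha(j^2W)+\rho(jW)\alpha(j^2V)+\alpha\bigl(j[jV,jW]\bigr).
\end{align*}
Using $j^2=-\Id$, the first two terms become $\rho(jV)\alpha(W)-\rho(jW)\alpha(V)$, and the two expressions agree termwise.

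There is no genuine obstacle: once the formulas for $\rho_j$ and $[\cdot,\cdot]_j$ from \eqref{bracket_on_AI} are substituted and $j^2=-\Id$ is applied, the identity is a matter of bookkeeping. The only point that requires (minor) care is the sign coming from $[V,W]_j=-j[jV,jW]$ and its cancellation with the outer minus sign in $-(j^*\circ d^R\circ j^*)\alpha$; everything else is symmetric in $V,W$.
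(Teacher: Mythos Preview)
Your proof is correct and follows essentially the same approach as the paper: both evaluate each side via the Koszul formula for the Lie algebroid differential, substitute $\rho_j=\rho\circ j$ and the deformed bracket from \eqref{bracket_on_AI}, and use $j^2=-\Id$. The only cosmetic difference is that the paper verifies the equivalent identity $d^I(j^*\alpha)=j^*\,d^R\alpha$ (obtained from yours by replacing $\alpha$ with $j^*\alpha$ and using $(j^*)^2=-\Id$), whereas you compute both sides of the stated formula directly; the underlying manipulation is identical.
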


\begin{proof}
We compute for all $X,Y\in\Gamma(A)$:
\begin{align*}
(d_If)(X)&=\rho_I(X)(f)=\rho(jX)(f)=(d_Rf)(jX),\\
(d_I(j^*\alpha))(X,Y)&=\rho_I(X)\alpha(jY)-\rho_I(Y)\alpha(jX)-\alpha(j[X,Y]_I)\\
&=\rho(j(X))\alpha(jY)-\rho(j(Y))\alpha(jX)-\alpha([j(X),j(Y)])\\
&=(d_R\alpha)(j(X),j(Y)).\qedhere
\end{align*}
\end{proof}

In particular, if the dual $A^*$ of $A$ is also endowed with a Lie
algebroid structure. 
Using the fact that the complex structure in the fibers of $A^*$ is
$j^*$, we have:
\begin{align}
d^I_*  X&=-(j\smalcirc d^R_* \smalcirc j)X, \label{d_*_on_sections}\\
d^I_* f&=(j \smalcirc d^R_*)f\label{d_*_on_functions}
\end{align}
for all $f\in C^\infty(M)$ and $X\in\Gamma(A)$.

\begin{prop}
Let $(A, A^*)$ be a pair of holomorphic Lie algebroids.
The following are equivalent
\begin{enumerate}
\item 
$(A, A^*)$ is a holomorphic Lie bialgebroid;
\item 
$(A_R, A_R^*)$ is a Lie bialgebroid;
\item 
$(A_R, A_I^*)$ is a Lie bialgebroid.
\end{enumerate}
\end{prop}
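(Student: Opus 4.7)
I would prove the three implications as the two equivalences $(1)\Leftrightarrow(2)$ and $(2)\Leftrightarrow(3)$.

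For $(1)\Leftrightarrow(2)$, both conditions are encoded by the single identity $d_*[X,Y]=[d_*X,Y]+[X,d_*Y]$; only the class of test sections differs (holomorphic local sections in (1), smooth real sections in (2)). The direction $(2)\Rightarrow(1)$ is immediate, since holomorphic sections are in particular smooth. For $(1)\Rightarrow(2)$ I would first invoke Proposition \ref{prop:extension} (the uniqueness of the real smooth Lie algebroid extension of a holomorphic one), applied to both $A$ and $A^*$, so that $[\cdot\,,\cdot]_R$ on $A_R$ and the differential $d_*^R$ on $\Gamma(\wedge^\com A_R)$ are completely determined by their restrictions to holomorphic sections. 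Since, locally, every smooth section of $A_R$ is a $C^\infty(M,\CC)$-linear combination of holomorphic sections, and since both $d_*^R$ and $[\cdot\,,\cdot]_R$ satisfy Leibniz rules (with respect to the wedge product and the anchor $\rho$ respectively), the compatibility identity propagates from a local holomorphic frame of $\sections{A}$ to arbitrary smooth sections. Concretely, once compatibility is verified on holomorphic $X,Y\in\shs{A}(U)$ and on $f\in\hf{X}(U)$, the identities $d_*(fX)=d_*f\wedge X+f\,d_*X$ and $[X,gY]_R=(\rho(X)g)Y+g[X,Y]_R$ reduce the check on $fX,gY\in\Gamma(A_R)$ to the holomorphic case plus the mixed identity $d_*[X,f]=[d_*X,f]+[X,d_*f]$, which itself follows by applying the holomorphic compatibility to the derivation property.

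For $(2)\Leftrightarrow(3)$, the two statements both assert that the common bracket $[\cdot\,,\cdot]_R$ on $A_R$ is compatible with a differential on $A_R^*$, but the differentials $d_*^R$ and $d_*^I$ are different. The plan is to use the formulas \eqref{d_*_on_sections} and \eqref{d_*_on_functions}, together with the integrability of $j$ (vanishing Nijenhuis tensor) and the fact that $j\colon A_I\to A_R$ is a Lie algebroid isomorphism, to show that conjugation by $j$ (extended to $\wedge^\com A$ as an algebra endomorphism, in accordance with the convention used in the lemma just above the proposition) transforms the compatibility of $d_*^R$ with $[\cdot\,,\cdot]_R$ into that of $d_*^I$ with $[\cdot\,,\cdot]_R$. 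Concretely one writes
\[ d_*^I[X,Y]_R=-j\bigl(d_*^R(j[X,Y]_R)\bigr), \]
uses $j[X,Y]_R=[jX,Y]_R+[X,jY]_R-[X,Y]_I$ together with $[X,Y]_I=-j[jX,jY]_R$ from \eqref{bracket_on_AI}, and applies the derivation identity for $d_*^R$ to each term.

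\textbf{Main obstacle.} The substantive work lies in $(2)\Leftrightarrow(3)$: once the convention for extending $j$ to $\wedge^\com A$ is fixed, the verification is a lengthy but mechanical calculation organized around the Jacobi-type identities forced by the vanishing Nijenhuis tensor of $j$ and the Lie algebroid isomorphism $j\colon A_I\to A_R$. The cleanest way to manage the bookkeeping is to reformulate the Lie bialgebroid condition as the statement that $d_*$ is a degree-$1$ derivation of the Gerstenhaber bracket on $(\Gamma(\wedge^\com A),\wedge,[\cdot\,,\cdot])$, in which case it suffices to check the derivation property on the generators $C^\infty(M)$ and $\Gamma(A)$; the two generator-level identities then reduce, via \eqref{d_*_on_sections}--\eqref{d_*_on_functions}, to the corresponding identities for $d_*^R$.
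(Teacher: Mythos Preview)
Your plan for $(1)\Leftrightarrow(2)$ is essentially the paper's, with one omission. To pass from holomorphic test data to smooth ones you need the failure operator
\[
\shs{L}_X f := d_*^R[X,f]-[d_*^R X,f]-[X,d_*^R f]
\]
to vanish not only on holomorphic $f$ but on a generating set of $C^\infty(U,\CC)$, and holomorphic functions alone do not generate. The paper closes this by checking separately that $\shs{L}_X\cc f=0$ for holomorphic $f$, because each term vanishes individually (brackets and $d_*^R$ kill antiholomorphic functions when $X$ is holomorphic, a fact drawn from \cite{LaStXu08}). Your Leibniz argument needs this antiholomorphic step inserted before it can reach arbitrary smooth coefficients.

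Your plan for $(2)\Leftrightarrow(3)$ diverges from the paper and, as sketched, lands on the wrong equivalence. Conjugation by the algebra automorphism $j^\wedge$ of $\wedge^\com A$ does carry $d_*^R$ to $d_*^I$ (this is precisely what \eqref{d_*_on_sections}--\eqref{d_*_on_functions} encode), but it simultaneously carries the Schouten bracket of $A_R$ to that of $A_I$, since on generators $j^{-1}[jX,jY]_R=[X,Y]_I$ by \eqref{bracket_on_AI}. Hence the statement your conjugation argument actually proves is
\[
(A_R,A_I^*)\text{ Lie bialgebroid}\ \Longleftrightarrow\ (A_I,A_R^*)\text{ Lie bialgebroid},
\]
which is the content of the \emph{next} proposition in the paper, not $(2)\Leftrightarrow(3)$. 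To obtain $(A_R,A_R^*)\Leftrightarrow(A_R,A_I^*)$ directly you would need a further relation between $d_*^R$ and $j$ on $\wedge^\com A$ encoding the holomorphicity of $A^*$, and you have not isolated one. The paper sidesteps this entirely: it proves $(1)\Leftrightarrow(3)$ by repeating the same derivation-operator argument used for $(1)\Leftrightarrow(2)$, noting that for holomorphic $f$ one has $d_*^I f=i\,d_* f$, so the holomorphic compatibility for $d_*^I$ is literally $i$ times that for $d_*$. In other words, the paper routes both equivalences through the holomorphic condition $(1)$ rather than attempting to pass between $(2)$ and $(3)$ directly.
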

\begin{proof}
(b)$\Rightarrow$(a)
It is clear that if $(A_R, A_R^*)$ is a real Lie bialgebroid
then the compatibility condition for $(A, A^*)$ being a  
holomorphic Lie bialgebroid is  automatically  satisfied.


(a)$\Rightarrow$(b)
Assume that $U\subset M$ is any open subset, and $X\in \shs{A}(U)$
any holomorphic section. Consider the operator
$C^\infty (U, \C )\to \gm (A_R|_U\otimes \C )$ defined by
\begin{equation}\label{def_of_LX}
\shs{L}_Xf=d_*^R [X, f]-[d_*^R X, f]-[X, d_*^R  f]
\end{equation}
for all $f\in C^\infty (U, \C )$.
Here $d_*^R : \gm (\wedge^\com A_R\otimes \C)\to \gm (\wedge^{\com+1}  A_R\otimes \C)$
is the Lie algebroid cohomology  differential with the trivial coefficients
$\C$ of the  Lie algebroid  $A_R^*$, and
$X$ is considered as a  section of $A_R|_U$.
It is simple to check that $\shs{L}_X$ is a derivation, i.e.,
$$\shs{L}_X (fg)=f\shs{L}_X g+g\shs{L}_X f .$$
Since $(A, A^*)$ is a holomorphic Lie bialgebroid,
it follows that $\shs{L}_X f=0,  \ \forall f\in \hf{P}(U)$
according to \eqref{eq:bialgebroidf}.
Here we use the fact that $d_*f=d_*^R f$ for any $f\in \hf{P}(U)$.
On the other hand,   we also have $\shs{L}_X \cc{f}=0$
since each term in \eqref{def_of_LX} vanishes \cite{LaStXu08}. 

Hence it follows that $\shs{L}_X f=0$ for all $f\in C^\infty (U, \C )$.
Finally,  when $U$ is a contractible open subset
of $M$, $A|_U$ is a trivial bundle,  $\gm (A|_U\otimes \C)$ is spanned by
$\shs{A}(U)$ over $C^\infty (U, \C)$. 
It thus follows that  $d_*[X, Y]=[d_* X, Y]+[X, d_* Y]$
for any $X, Y\in \gm (A|_U\otimes \C)$. Hence 
$(A_R, A_R^*)$ is indeed  a real Lie bialgebroid.

(a)$\Leftrightarrow$(c): The equivalence between (a) and (c) can be proved similarly, using 
the equality $d_*^If=i\cdot d_*f$
for all $f\in\mathcal A(U)$.
\end{proof}

It is well known that Lie bialgebroids are symmetric. Hence
$(A_R, A_R^*)$ is a Lie bialgebroid if and only if
$(A_R^*, A_R)$ is  a Lie bialgebroid.
As a consequence, we have

\begin{cor}
$(A, A^*)$ is a holomorphic Lie bialgebroid iff 
$(A^*, A)$ is a holomorphic Lie bialgebroid.
\end{cor}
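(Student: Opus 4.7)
The plan is to reduce the statement to the symmetry of ordinary real Lie bialgebroids via the proposition immediately preceding the corollary. That proposition establishes the equivalence
\[
(A,A^*) \text{ holomorphic Lie bialgebroid} \;\Longleftrightarrow\; (A_R,A_R^*) \text{ real Lie bialgebroid},
\]
so the corollary will follow in two steps once one knows that the roles of $A_R$ and $A_R^*$ can be interchanged on the right-hand side.

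First I would invoke the well-known symmetry theorem for real Lie bialgebroids (Mackenzie--Xu, \cite{MaXu94}): a pair $(B,B^*)$ of real Lie algebroids in duality is a Lie bialgebroid if and only if $(B^*,B)$ is a Lie bialgebroid. Applying this to $B=A_R$ gives
\[
(A_R,A_R^*) \text{ is a Lie bialgebroid} \;\Longleftrightarrow\; (A_R^*,A_R) \text{ is a Lie bialgebroid}.
\]
Next, since the underlying real Lie algebroid of $A^*$ is by definition $(A^*)_R = A_R^*$, and the underlying real Lie algebroid of $A$ is $A_R$, the preceding proposition applied with the roles of $A$ and $A^*$ exchanged yields
\[
(A^*,A) \text{ holomorphic Lie bialgebroid} \;\Longleftrightarrow\; (A_R^*,A_R) \text{ real Lie bialgebroid}.
\]
Chaining the three equivalences gives the corollary.

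The only point that requires care is to verify that the hypothesis of the preceding proposition, namely that both entries of the pair be holomorphic Lie algebroids, is symmetric in $A$ and $A^*$; this is immediate since the holomorphic vector bundle structures on $A$ and $A^*$ are dual to each other, and the assumption in the corollary that $(A,A^*)$ is a holomorphic Lie bialgebroid already requires $A^*$ to carry a holomorphic Lie algebroid structure by Definition \ref{def:bialg}. Hence there is no real obstacle: the corollary is a direct formal consequence of the previous proposition combined with the classical symmetry of real Lie bialgebroids.
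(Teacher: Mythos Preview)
Your argument is correct and is exactly the paper's own: it deduces the corollary from the preceding proposition together with the classical symmetry of real Lie bialgebroids due to Mackenzie--Xu, by chaining the equivalence $(A,A^*)$ holomorphic $\Leftrightarrow (A_R,A_R^*)$ real with the swapped version. There is nothing to add.
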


\begin{prop} Let $(A, A^*)$ be a pair of holomorphic Lie algebroids.
\begin{enumerate}
\item $(A_R, A_R^*)$ is a Lie bialgebroid, if and only if,
$(A_I, A_I^*)$ is a Lie bialgebroid,
\item $(A_R, A_I^*)$ is a Lie bialgebroid, if and only if,
$(A_I, A_R^*)$ is a Lie bialgebroid.
\end{enumerate}
\end{prop}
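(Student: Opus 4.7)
The plan is to split the proof into two parts: (b) falls out immediately from the previous proposition together with its corollary (expressing the symmetry of the holomorphic Lie bialgebroid notion), while (a) requires a short transport argument using the Lie algebroid isomorphism $j:A_I\to A_R$ recalled from \cite{Kosmann96}.

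For part (b), the previous proposition applied to the holomorphic Lie bialgebroid $(A,A^*)$ gives $(A_R,A_R^*)\Leftrightarrow(A_R,A_I^*)$. By the corollary, $(A,A^*)$ is a holomorphic Lie bialgebroid if and only if $(A^*,A)$ is; applying the same proposition to $(A^*,A)$ then yields $(A_R^*,A_R)\Leftrightarrow(A_R^*,A_I)$, which, by symmetry of the Lie bialgebroid notion, reads $(A_R,A_R^*)\Leftrightarrow(A_I,A_R^*)$. Combining the two equivalences produces $(A_R,A_I^*)\Leftrightarrow(A_I,A_R^*)$, which is (b).

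For part (a), the starting point is that $A^*$ is itself a holomorphic Lie algebroid, with fiberwise complex structure $j^*$, so the isomorphism $j:A_I\to A_R$ recalled from \cite{Kosmann96} has an analog $j^*:A_I^*\to A_R^*$, both of which are Lie algebroid isomorphisms. Extending $j$ multiplicatively to a Gerstenhaber algebra isomorphism $\wedge^\bullet j:\Gamma(\wedge^\bullet A_I)\to\Gamma(\wedge^\bullet A_R)$, one combines the identity $[X,Y]_I=-j[jX,jY]$ with the formulas $d_*^I X=-(j\circ d_*^R\circ j)(X)$ and $d_*^I f=(j\circ d_*^R)(f)$ from equations \eqref{d_*_on_sections}--\eqref{d_*_on_functions}, together with the fact that $(\wedge^2 j)^2=\Id$ (since $j^2=-\Id$), in order to translate the derivation identity
\[d_*^R[jX,jY]=[d_*^R(jX),jY]+[jX,d_*^R(jY)]\]
into
\[d_*^I[X,Y]_I=[d_*^I X,Y]_I+[X,d_*^I Y]_I.\]
This establishes $(A_R,A_R^*)\Rightarrow(A_I,A_I^*)$. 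The converse is obtained by running the same argument with $j$ replaced by its inverse $-j:A_R\to A_I$, which is also a Lie algebroid isomorphism (with the roles of $R$ and $I$ interchanged).

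The only delicate point is the sign bookkeeping in the Gerstenhaber-bracket computation of part (a); all the substantive ingredients---the isomorphisms $j$ and $j^*$, the explicit formulas relating $d_*^I$ and $d_*^R$, and the previous proposition together with its corollary---are already in place.
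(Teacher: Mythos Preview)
Your proposal is correct. For part (a) you follow essentially the paper's own route: a direct computation exploiting $[V,W]_j=-j[jV,jW]$ together with the relation $d_*^I=-(j\circ d_*^R\circ j)$ from \eqref{d_*_on_sections} to transport the derivation identity from $(A_R,A_R^*)$ to $(A_I,A_I^*)$. The paper writes out this same chain of equalities explicitly; your ``transport via the Gerstenhaber isomorphism $\wedge^\bullet j$'' is just a conceptual repackaging of that computation.

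For part (b), however, you take a genuinely different route. The paper proves (b) by another direct computation, completely parallel to (a): starting from the $(A_I,A_R^*)$ compatibility and unwinding $d_*^I[V,W]$ via \eqref{d_*_on_sections} and the identity $[V,W]=-j[jV,jW]_j$. Your argument instead bootstraps from what is already established: the previous proposition gives $(A_R,A_R^*)\Leftrightarrow(A_R,A_I^*)$, and applying it to the pair $(A^*,A)$ (legitimate by the corollary on symmetry) gives $(A_R,A_R^*)\Leftrightarrow(A_I,A_R^*)$; chaining these yields (b) with no further computation. This is cleaner and makes the logical dependence on the earlier results transparent, whereas the paper's approach has the virtue of being self-contained and exhibiting the explicit parallelism between (a) and (b). One small phrasing issue: you write ``applied to the holomorphic Lie bialgebroid $(A,A^*)$'', but at this point $(A,A^*)$ is merely a pair of holomorphic Lie algebroids; the previous proposition still applies (its equivalences are unconditional), so the logic is unaffected, but the wording should be adjusted.
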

\begin{proof}
Recall from \eqref{bracket_on_AI} that the Lie bracket
on $A_I$ is given by 
$\lie{V}{W}_{j}=-j[jV,jW]$ for all $V,W\in\Gamma(A_I)$.
\begin{enumerate}
\item Assume that $(A_R,A_R^*)$ is a Lie bialgebroid. 
Then we have for all $V,W\in\Gamma(A_I)$:
\begin{align*}
d_*^I[V,W]_j&\overset{\eqref{d_*_on_sections}}=(j\smalcirc d_*^R\smalcirc j)(-j[jV,jW])
= (j\smalcirc d_*^R)([jV,jW])\\
&\,=\,j\left(\left[d_*^R(jV),jW\right]+\left[jV,d_*^R(jW)\right]
\right)\\
&\overset{\eqref{d_*_on_sections}}=-j\left(\left[j(d_*^I V),jW\right]+\left[jV,j(d_*^I W)\right]
\right)\\
&\,=\,\left[d_*^I V,W\right]_j+\left[V,d_*^I W\right]_j,
\end{align*}
which shows that $(A_I,A_I^*)$ is a Lie bialgebroid. The converse can
be shown in the same manner.
\item Assume that $(A_I,A_R^*)$ is a Lie  bialgebroid. Then, for all $V,W\in\Gamma(A_R)$, we get
\begin{align*}
d_*^I[V,W]&=-(j\smalcirc d_*^R\smalcirc j)([V, W])= -(j\smalcirc d_*^R\smalcirc j)(-j[j(V), j(W)]_j)\\
&=-(j\smalcirc d_*^R)([j(V), j(W)]_j)\\
&=-j[d_*^R(jV),jW]_j-j[jV,d_*^R(jW)]_j\\
&=-j[j(d_*^IV),jW]_j-j[jV,j(d_*^IW)]_j\\
&=[d_*^IV,W]+[V,d_*^IW].\qedhere
\end{align*}
Hence $(A_R,A_I^*)$ is a Lie bialgebroid. The converse can
be proved similarly.
\end{enumerate}
\end{proof}

\begin{prop}
Let $(A, A^*)$ be a pair of holomorphic Lie algebroids.
Then $(A_R, A_I^*)$ is a Lie bialgebroid if and only if 
$A_R$ endowed with the map $\J_{A_R}:\mathsf P_{A_R}\to \mathsf P_{A_R}$,
\begin{equation}
\J_{A_R}=\begin{pmatrix}
J_{A_R}& \pi_{A_I^*}^\sharp\\
0&-J_{A_R}^*
\end{pmatrix}
\end{equation}
is  a \Glanon Lie algebroid, 
where $\pi_{A_I^*}$ is the Poisson structure on $A_R$ that is induced by the Lie algebroid
structure on $A_I^*$.
\end{prop}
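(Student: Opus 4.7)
The plan is to split the \Glanon Lie algebroid condition on $(A_R, \J_{A_R})$ into three ingredients and show that one of them matches the Lie bialgebroid condition on $(A_R, A_I^*)$, while the others are automatic consequences of the standing assumption that $(A, A^*)$ is a pair of holomorphic Lie algebroids.

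First, I would unpack what it means for $\J_{A_R}$ to be a \Glanon Lie algebroid: $\J_{A_R}^2=-\Id$, $\langle\cdot,\cdot\rangle_{A_R}$-orthogonality, vanishing of the Nijenhuis tensor $\mathcal N_{\J_{A_R}}$, and $\J_{A_R}$ being a Lie algebroid morphism $\mathsf P_{A_R}\to\mathsf P_{A_R}$. Because $\J_{A_R}$ is block upper-triangular, orthogonality is automatic; the identity $\J_{A_R}^2=-\Id$ reduces to $J_{A_R}^2=-\Id$ (immediate from $j^2=-\Id$) together with the bivector compatibility $J_{A_R}\circ\pi_{A_I^*}^\sharp = \pi_{A_I^*}^\sharp\circ J_{A_R}^*$.

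Second, I would apply Example~\ref{ex:cc} to the holomorphic Lie algebroid $A$ itself. This already yields that the diagonal map $\begin{pmatrix}J_{A_R}&0\\0&-J_{A_R}^*\end{pmatrix}$ is a \Glanon Lie algebroid structure on $A_R$, so $J_{A_R}$ is an integrable complex structure and, together with $-J_{A_R}^*$, defines a Lie algebroid morphism on $\mathsf P_{A_R}$. This reduces the remaining analysis of $\mathcal N_{\J_{A_R}}=0$ and of the morphism property to the off-diagonal entry $\pi_{A_I^*}^\sharp$.

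Third, I would invoke the Mackenzie--Xu identification between Lie bialgebroids and morphic linear Poisson structures: $(A_R, A_I^*)$ is a Lie bialgebroid if and only if $\pi_{A_I^*}^\sharp: T^*A_R\to TA_R$ is a Lie algebroid morphism, with respect to the cotangent Lie algebroid structure on $T^*A_R$ induced by $\pi_{A_I^*}$ and the tangent prolongation on $TA_R$. Combined with Step~2, this matches exactly the off-diagonal contribution to the morphism property of $\J_{A_R}$, and therefore accounts for both implications of the proposition once the "cross terms" are shown to be automatic.

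The main obstacle is precisely these cross terms: the bivector compatibility $J_{A_R}\pi_{A_I^*}^\sharp=\pi_{A_I^*}^\sharp J_{A_R}^*$ and the mixed $J$-$\pi$ pieces of $\mathcal N_{\J_{A_R}}$ must hold independently of the Lie bialgebroid hypothesis. I expect to deduce these from the fact that $A^*$ is itself a holomorphic Lie algebroid: the fiberwise complex structure $j^*$ on $A^*$ and the real Lie algebroid isomorphism $j^*: A_I^*\to A_R^*$ translate, upon dualizing, into the $J_{A_R}$-invariance of $\pi_{A_I^*}$ and into $\pi_{A_I^*}$ being holomorphic on the complex manifold $(A, J_{A_R})$. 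Concretely, I would verify these in local holomorphic coordinates on the total space of $A$ using the structure functions of $A_I^*$ expressed through those of $A_R^*$ via $j^*$. Once this compatibility is in place, combining the three sub-conditions proves the equivalence in both directions.
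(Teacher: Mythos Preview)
Your decomposition of the \Glanon condition into the $J_{A_R}$-morphism piece, the $\pi^\sharp$-morphism piece (matched to the bialgebroid condition via Mackenzie--Xu), and the remaining ``cross'' integrability conditions is exactly the structure of the paper's proof. The one difference is in how those cross terms are dispatched: rather than a local-coordinate verification, the paper observes that since $A^*$ is a holomorphic Lie algebroid, the induced linear Poisson structure on $A$ is holomorphic (Proposition~3.12 in \cite{LaStXu09}), with $\pi_{A_I^*}$ as its imaginary part, and then invokes the known equivalence (Theorem~2.7 in \cite{LaStXu08}) between holomorphic Poisson structures on $(X,J)$ and generalized complex structures of the form $\begin{pmatrix}J & \pi_I^\sharp\\ 0 & -J^*\end{pmatrix}$. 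This packages $\J_{A_R}^2=-\Id$, orthogonality, and $\mathcal N_{\J_{A_R}}=0$ in one stroke without computation; your local-coordinate route would amount to reproving a special case of that theorem. Either way the argument goes through, but the citation-based version is shorter and makes the conceptual point---that the integrability of $\J_{A_R}$ is precisely the holomorphy of the linear Poisson structure---more transparent.
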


\begin{proof}
Assume first that $(A_R, \J_{A_R})$ is a \Glanon Lie algebroid.
Then the map $\pi_{A_I^*}^\sharp$
is a morphism of Lie algebroids $T^*A\to TA$, and it follows that
$(A_R,A_I^*)$ is a Lie bialgebroid \cite{MaXu00}.

Conversely, if $(A_R,A_I^*)$
is a Lie bialgebroid, the map $\pi_{A_I^*}^\sharp$
is a morphism of Lie algebroids $T^*A\to TA$.
According to Proposition 3.12 in \cite{LaStXu09}
up to a scalar, $\pi_A (\cdot, \cdot)
=\pi_{A_I^*}(\cdot, \cdot)+i\pi_{A_I^*} (J_{A_R}^* \cdot, \cdot)$
is the holomorphic Lie Poisson  structure
on $A$ induced by the holomorphic Lie algebroid $A^*$.
By Theorem 2.7 in \cite{LaStXu08}, $\J_{A_R}$ is hence a generalized complex structure.
Since $A$ is a holomorphic
Lie algebroid, the map $J_{A_R}:TA_R\to TA_R$ is  a morphism of Lie
algebroids according to \cite{LaStXu09}.
\end{proof}

We summarize our results in the following:
\begin{thm}\label{six_eq_things}
Let $(A, A^*)$ be a pair of holomorphic Lie algebroids.
The following assertions are all equivalent:
\begin{enumerate}
\item
$(A, A^*)$ is a holomorphic Lie bialgebroid;
\item
$(A_R, A_R^*)$ is a Lie bialgebroid;
\item
$(A_R, A_I^*)$ is a Lie bialgebroid;
\item
$(A_I, A_R^*)$ is a Lie bialgebroid;
\item
$(A_I, A_I^*)$ is a Lie bialgebroid;
\item the Lie algebroid $A_R$ endowed with the map $\J_{A_R}:\mathsf P_{A_R}\to \mathsf P_{A_R}$,
\begin{equation*}
\J_{A_R}=\begin{pmatrix}
J_{A_R}& \pi_{A_I}^\sharp\\
0&-J_{A_R}^*
\end{pmatrix}
\end{equation*}
is  a \Glanon Lie algebroid.
\end{enumerate}
\end{thm}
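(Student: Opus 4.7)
The statement is essentially a consolidation of the three propositions immediately preceding it, so my plan is to assemble the equivalences by quoting those results and closing the chain with a short argument where needed.

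The first step is to observe that the equivalences $(a)\Leftrightarrow(b)\Leftrightarrow(c)$ are exactly the content of the first proposition in the subsection, established via the operator $\shs{L}_X$ of \eqref{def_of_LX} and the complex-linear extension trick from \cite{LaStXu08}. Next, the equivalences $(b)\Leftrightarrow(e)$ and $(c)\Leftrightarrow(d)$ follow from the second proposition of the subsection, whose proofs rely on the two key identities \eqref{d_*_on_sections} and \eqref{d_*_on_functions} together with the formula $[V,W]_j=-j[jV,jW]$ relating the brackets on $A_R$ and $A_I$. Finally, the equivalence $(c)\Leftrightarrow(f)$ is precisely the third proposition of the subsection, which identifies the Lie-bialgebroid compatibility of $(A_R,A_I^*)$ with the statement that $\pi_{A_I^*}^\sharp: T^*A\to TA$ is a morphism of Lie algebroids and uses Proposition 3.12 and Theorem 2.7 of \cite{LaStXu08,LaStXu09} to conclude that $\J_{A_R}$ is a generalized complex structure compatible with the Lie algebroid $A_R$.

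Thus the plan of the proof is simply to chain the implications:
\begin{equation*}
(e)\Leftrightarrow(b)\Leftrightarrow(a)\Leftrightarrow(c)\Leftrightarrow(d),
\qquad (c)\Leftrightarrow(f),
\end{equation*}
all of which have been verified in the preceding three propositions. There is no substantial obstacle: the only thing to check is that no circular reasoning is introduced, which is clear because each proposition was proved independently (the first by direct computation with the differential $d_*^R$, the second by the bracket identity \eqref{bracket_on_AI}, and the third by invoking \cite{LaStXu08,LaStXu09,MaXu00}).

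The hard part, if any, lies not in this theorem but in the groundwork that precedes it, namely showing that the holomorphic compatibility can be detected after forgetting the complex structure (i.e.\ $(a)\Leftrightarrow(b)$). Once that is in place, the statement is a bookkeeping exercise and I would present it as a \emph{summary corollary} of the three propositions above, writing at most one or two sentences per implication to point the reader to the relevant argument.
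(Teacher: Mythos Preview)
Your proposal is correct and matches the paper's approach exactly: the paper introduces the theorem with ``We summarize our results in the following,'' giving no separate proof, precisely because the three preceding propositions already establish $(a)\Leftrightarrow(b)\Leftrightarrow(c)$, $(b)\Leftrightarrow(e)$ and $(c)\Leftrightarrow(d)$, and $(c)\Leftrightarrow(f)$ respectively. Your identification of the theorem as a summary corollary, and your chain of implications, are exactly what the paper intends.
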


\begin{example}
Consider a holomorphic Poisson manifold $(X, \pi)$, where $\pi=
\pi_R+i \pi_I\in \gm (\wedge^2 T^{1,0}X)$ is a holomorphic
Poisson tensor. Let $A=TX$ and $A^*=(T^*X)_\pi$, the
cotangent Lie algebroid associated to $\pi$. Then 
$(A, A^*)$ is a holomorphic Lie bialgebroid.
In this case, $A_R=TX$, $A_I=(TX)_J$, $A_R^*=(TX)^*_{4\pi_R}$
and $A_I^*=(TX)^*_{4\pi_I}$.
\end{example}

\subsection{Holomorphic Poisson groupoids}
\begin{defn}
A holomorphic Poisson groupoid is a holomorphic
Lie groupoid $\gm\rr M$ equipped with 
a holomorphic Poisson structure 
$\pi_\Gamma \in \gm (\wedge^2 T^{1, 0}\gm )$
such that the graph of the multiplication $\Lambda\subset\gm\times\gm\times\bar{\gm}$ 
is a coisotropic   submanifold, where $\bar{\gm}$
stands for  $\gm$ equipped with the opposite Poisson structure.
\end{defn}

Many properties of (smooth) Poisson groupoids have
a straightforward generalization in this holomorphic setting.
 We refer to \cite{Weinstein88b, MaXu94, Xu95}
 for details.
In particular, if $\gm\rr M$ is a holomorphic
Poisson groupoid, then $M$ is naturally a holomorphic
Poisson manifold. More precisely, there exists a unique
holomorphic Poisson structure on $M$ such that the source map
$\s:\Gamma\to M$ is a holomorphic Poisson map, while
the target map is then an anti-Poisson map.

As a consequence, $M$ naturally inherits a holomorphic
Poisson structure.

\begin{thm}[\cite{Xu95,LaStXu07}]
\label{thm:Poissongp1}
Let $\Gamma\rightrightarrows M$
 be a holomorphic Lie groupoid, and
 $\pi_\gm $  a
holomorphic  Poisson tensor on $\gm$. 
Then $(\Gamma,\pi_\gm)$ is a
holomorphic  Poisson groupoid if and only if 
$\pi^\#_\gm\colon T^*\gm \to T\gm$
is a morphism of holomorphic  Lie groupoids over some map
$a_*\colon A^*\to TM$ (which is then the anchor of the Lie algebroid $A^*$).
\end{thm}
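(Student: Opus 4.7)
The plan is to adapt the classical Mackenzie--Xu characterization of smooth Poisson groupoids to the holomorphic setting by passing through the underlying real Poisson bivector, then using holomorphicity to upgrade the resulting real morphism to a holomorphic one.

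First, I would recall the smooth Mackenzie--Xu result: for a smooth Lie groupoid $\gm\rr M$ and a smooth bivector $\pi$ on $\gm$, the pair $(\gm,\pi)$ is a Poisson groupoid (i.e.\ the graph $\Lambda\subset\gm\times\gm\times\bar\gm$ is coisotropic with respect to $\pi\oplus\pi\oplus(-\pi)$) if and only if $\pi^\#\colon T^*\gm\to T\gm$ is a morphism of Lie groupoids over some bundle map $\tilde a_*\colon A^*\to TM$, which then coincides with the anchor of the Lie algebroid $A^*$ arising from the Poisson structure. This is the template to imitate.

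Next, I would decompose the holomorphic data. Write $\pi_\gm\in\Gamma(\wedge^2 T^{1,0}\gm)$ in terms of its underlying real bivector $\pi_R$ on $\gm$ through the standard identification $T^{1,0}\gm\cong T\gm$ provided by the complex structure $J$; the holomorphy condition $\bar\partial\pi_\gm=0$ is equivalent to $J$-compatibility of $\pi_R$ (equivalently, $J\circ\pi_R^\#=\pi_R^\#\circ J^*$), and $[\pi_\gm,\pi_\gm]=0$ translates into $\pi_R$ being an honest real Poisson tensor (see the discussion preceding Theorem~\ref{six_eq_things} and \cite{LaStXu08}). Since $\gm\rr M$ is a holomorphic Lie groupoid, its underlying real structure is a smooth Lie groupoid for which $J$ is multiplicative, so $\bar\gm$ (opposite holomorphic Poisson structure) corresponds to the real groupoid with bivector $-\pi_R$.

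Third, I would show that the coisotropicity of $\Lambda$ with respect to the holomorphic Poisson structure on $\gm\times\gm\times\bar\gm$ is equivalent to the coisotropicity of $\Lambda$ with respect to the real Poisson tensor built from $\pi_R$ on the underlying smooth groupoid. The point is that $\Lambda$ is a $J$-invariant submanifold (since multiplication in a holomorphic groupoid is holomorphic), so its conormal is $J^*$-invariant, and thus the image of the conormal under $\pi_R^\#$ lies in $T\Lambda$ if and only if the corresponding holomorphic condition holds. Hence $(\gm,\pi_\gm)$ is a holomorphic Poisson groupoid if and only if $(\gm,\pi_R)$ is a smooth real Poisson groupoid with the additional feature that $\pi_R^\#$ is $J$--$J^*$-equivariant.

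Fourth, I would apply the smooth Mackenzie--Xu theorem to $(\gm,\pi_R)$: this gives that $\pi_R^\#\colon T^*\gm\to T\gm$ is a morphism of smooth Lie groupoids over a base map $\tilde a_*\colon A^*\to TM$. Combining with the $J$-equivariance obtained in step three, the map $\pi_R^\#$ restricts to a holomorphic bundle map $T^{*(1,0)}\gm\to T^{1,0}\gm$ which is precisely $\pi_\gm^\#$, and this restriction is automatically a morphism of holomorphic Lie groupoids (a smooth groupoid morphism between holomorphic groupoids that intertwines the complex structures is holomorphic). The base map $\tilde a_*$ is then holomorphic and, by the real Mackenzie--Xu statement, equals the anchor of the Lie algebroid dual $A^*$; by Proposition~\ref{prop:extension} this anchor is the holomorphic anchor of the holomorphic Lie algebroid $A^*$. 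Conversely, if $\pi_\gm^\#$ is a morphism of holomorphic groupoids, running the same argument backwards yields coisotropicity of $\Lambda$.

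The main obstacle, as usual in this kind of real/complex decomposition argument, lies in step three: one must verify carefully that the coisotropicity of $\Lambda$ under the holomorphic Poisson tensor is really equivalent to the real coisotropicity, and not just implied by it. This boils down to showing that on the $J$-invariant submanifold $\Lambda$, the conormal bundle and its image under $\pi_R^\#$ interact with $J$ in a way that forces the real and imaginary parts of the holomorphic coisotropy condition to coincide with a single real condition; everything else in the argument is bookkeeping once the smooth Mackenzie--Xu theorem and the real/holomorphic dictionary of Proposition~\ref{prop:extension} are in place.
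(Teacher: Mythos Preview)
The paper does not actually prove this theorem: it is stated with a citation to \cite{Xu95,LaStXu07} and no argument is supplied. So there is no ``paper's own proof'' to compare your proposal against.

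On its own merits, your strategy is the natural one and is essentially correct: reduce to the smooth Mackenzie--Xu characterization via the real part $\pi_R$ of $\pi_\gm$, then use holomorphy of $\pi_\gm$ and of the groupoid structure to upgrade the resulting real groupoid morphism to a holomorphic one. Two remarks. First, the $J$--$J^*$-equivariance of $\pi_R^\#$ that you describe in step three as an ``additional feature'' is not something extracted from coisotropicity; it is equivalent to $\pi_\gm$ being of type $(2,0)$ and hence is part of the hypothesis from the start. The actual content of step three is therefore only that, for a complex submanifold $\Lambda$ of a holomorphic Poisson manifold, holomorphic coisotropicity is equivalent to real coisotropicity with respect to $\pi_R$; this follows because the real conormal $(T\Lambda)^\circ$ is $J^*$-stable and $\pi_R^\#$ intertwines $J^*$ and $J$, so $\pi_R^\#\big((T\Lambda)^\circ\big)$ is a $J$-stable real subspace, and containment in $T\Lambda$ is then equivalent to containment of its $(1,0)$-part in $T^{1,0}\Lambda$. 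Second, keep track of the scalar factors (powers of $4$) that arise when passing between $\pi_\gm^\#$ on $T^{*1,0}\gm$ and $\pi_R^\#$ on the real cotangent bundle (cf.\ Proposition~\ref{real_poisson_gpds}); these do not affect whether the map is a groupoid morphism, but they do matter when identifying the induced base map $a_*$ with the anchor of the holomorphic Lie algebroid $A^*$.
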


 For any  open subset $U\subset M$ and $X
\in\shs{A}^k (U)$, it follows as in 
 \cite[Theorem 3.1]{Xu95}
 that $\left[X^r, \pi_\gm\right]$ is a
right-invariant holomorphic $(k+1)$-vector field on $\gm_U^U$.
Hence it defines an element, denoted $d_* X$, in $\shs{A}^{k+1} (U)$, i.e.
$$(d_* X)^r=\left[X^r, \pi_\gm\right].$$
As  in \cite{Xu99},
 one  proves that
$(\shs{A}^\com, \wedge, [\cdot, \cdot], d_*)$ is a   
 sheaf of differential  Gerstenhaber algebras over $M$. This proves the following proposition.

\begin{prop}
A holomorphic Poisson groupoid naturally induces 
a holomorphic Lie bialgebroid.
\end{prop}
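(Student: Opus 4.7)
The plan is to reduce the statement to the characterization of holomorphic Lie bialgebroids in terms of sheaves of differential Gerstenhaber algebras established earlier in the paper, by adapting Xu's smooth-case argument \cite{Xu95, Xu99} to the holomorphic setting.

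First, since $\Gamma \rightrightarrows M$ is a holomorphic Lie groupoid, its Lie algebroid $A$ is a holomorphic Lie algebroid, so the sheaf of holomorphic sections $\shs{A}^\bullet$ is already a sheaf of Gerstenhaber algebras over $M$. The task is therefore to produce a differential $d_*\colon \shs{A}^k \to \shs{A}^{k+1}$ satisfying $d_*^2 = 0$ and the derivation property with respect to the Gerstenhaber bracket, and to show that this $d_*$ is dual to a holomorphic Lie algebroid structure on $A^*$.

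Next I would use the holomorphic version of Theorem \ref{thm:Poissongp1}: for a holomorphic Poisson groupoid, $\pi_\Gamma^\sharp\colon T^*\Gamma \to T\Gamma$ is a morphism of holomorphic Lie groupoids over a holomorphic map $a_*\colon A^*\to TM$. Applying the Lie functor to $\pi_\Gamma^\sharp$ gives a holomorphic Lie algebroid morphism, whose restriction to the units supplies $A^*$ with a holomorphic bracket and holomorphic anchor $a_*$. Concretely, for a holomorphic section $X\in\shs{A}^k(U)$, the right-invariant multivector field $X^r$ is holomorphic on $\Gamma_U^U$, and since $\pi_\Gamma$ is holomorphic, the Schouten bracket $[X^r,\pi_\Gamma]$ is a holomorphic, right-invariant $(k+1)$-vector field on $\Gamma_U^U$. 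Thus it corresponds to a unique element $d_*X \in \shs{A}^{k+1}(U)$, and this formula in degree one encodes the dual bracket and anchor on $A^*$.

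Then I would verify that $(\shs{A}^\bullet,\wedge,[\cdot,\cdot],d_*)$ is a sheaf of differential Gerstenhaber algebras over $M$. The Leibniz property $d_*(X\wedge Y) = d_*X\wedge Y + (-1)^{|X|} X\wedge d_*Y$ follows from the Leibniz rule of the Schouten bracket together with right-invariance. The identity $d_*^2 = 0$ is equivalent to $[\pi_\Gamma,\pi_\Gamma]=0$, which holds because $\pi_\Gamma$ is Poisson. The compatibility $d_*[X,Y] = [d_*X,Y] + [X,d_*Y]$ is exactly the statement that the operator $X^r \mapsto [X^r,\pi_\Gamma]$ is a derivation of the Schouten bracket, which again follows from the graded Jacobi identity for Schouten together with $[\pi_\Gamma,\pi_\Gamma]=0$. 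These are precisely the holomorphic analogues of the computations in \cite{Xu95,Xu99}, carried out on the sheaf $\shs{A}^\bullet$ rather than on smooth sections.

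Finally, invoking the proposition proved earlier in the excerpt (characterizing holomorphic Lie bialgebroids by the existence of a sheaf of differential Gerstenhaber algebras structure on $\shs{A}^\bullet$) immediately yields that $(A,A^*)$ is a holomorphic Lie bialgebroid. The main subtlety is ensuring holomorphicity at each step: the key input is that $\pi_\Gamma\in\Gamma(\wedge^2 T^{1,0}\Gamma)$ together with holomorphicity of the structure maps of $\Gamma$ guarantees that right translation preserves the holomorphic category, so that $d_*$ restricts from smooth to holomorphic sections; once this is in place, the algebraic identities follow formally from the smooth case.
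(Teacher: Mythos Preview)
Your proposal is correct and follows essentially the same route as the paper: define $d_*X$ by $(d_*X)^r=[X^r,\pi_\Gamma]$, check that $(\shs{A}^\bullet,\wedge,[\cdot,\cdot],d_*)$ is a sheaf of differential Gerstenhaber algebras by adapting the smooth argument of \cite{Xu95,Xu99} to holomorphic sections, and then invoke the earlier characterization of holomorphic Lie bialgebroids. The only difference is cosmetic: you invoke Theorem~\ref{thm:Poissongp1} to motivate the holomorphic Lie algebroid structure on $A^*$, whereas the paper goes directly to the $d_*$-construction and reads off the $A^*$-structure from the resulting differential Gerstenhaber algebra.
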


\begin{prop}\label{real_poisson_gpds}
Let $(\gm\rr M, \pi)$ be a holomorphic Poisson groupoid with 
holomorphic  Lie bialgebroid $(A, A^*)$, where
$\pi_\Gamma=\pi_R+i\pi_I \in \gm (\wedge^2 T^{1, 0}\gm )$.
Then
\begin{enumerate}
\item both $(\gm\rr M, \pi_R )$ and 
$(\gm\rr M, \pi_I )$ are Poisson groupoids;
\item the Lie bialgebroids of  $(\gm\rr M, \pi_R )$ and 
$(\gm\rr M, \pi_I )$ are $(A_R, A_{1/4R}^*)$ and
 $(A_R, A_{1/4I}^*)$\footnote{We write $A_{1/4R}^*$ 
(respectively $A_{1/4\cdot I}^*$ for the Lie algebroid
$(A_R^*,1/4[\cdot\,,\cdot]_{A_R^*},1/4\rho_{A_R^*})$
(respectively $(A_I^*,1/4[\cdot\,,\cdot]_{A_I^*},1/4\rho_{A_I^*})$).
}, respectively.
\end{enumerate}
\end{prop}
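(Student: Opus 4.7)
The plan is to prove the two assertions in turn, treating $\pi_R$ and $\pi_I$ in parallel since the arguments are symmetric under the replacement $\pi\mapsto i\pi$.

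For part (a), I would first verify that $\pi_R$ and $\pi_I$ are real Poisson bivectors on $\Gamma$ and then check multiplicativity. Since $\pi=\pi_R+i\pi_I$ lies in $\Gamma(\wedge^2 T^{1,0}\Gamma)$ and satisfies $[\pi,\pi]=0$ in the holomorphic Schouten calculus, and since $\pi$ being of type $(2,0)$ forces the identity $J_\Gamma\circ\pi_R^\sharp=\pi_I^\sharp$ (up to the chosen normalization), expanding the real Schouten bracket of the bivector $\pi_R+i\pi_I$ into its real and imaginary parts yields simultaneously $[\pi_R,\pi_R]=[\pi_I,\pi_I]=[\pi_R,\pi_I]=0$ as real Schouten brackets. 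This is the classical statement that the real and imaginary parts of a holomorphic Poisson tensor are commuting real Poisson tensors. For multiplicativity, Theorem \ref{thm:Poissongp1} gives that $\pi^\sharp\colon T^*\Gamma\to T\Gamma$ is a morphism of holomorphic Lie groupoids. Decomposing this map according to the $(1,0)$-structure, both $\pi_R^\sharp$ and $\pi_I^\sharp$, viewed as real bundle maps $T^*\Gamma\to T\Gamma$, become morphisms of real Lie groupoids. By the real Mackenzie-Xu theorem \cite{MaXu94}, this is equivalent to the multiplicativity of $\pi_R$ and $\pi_I$.

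For part (b), I would identify the real Lie bialgebroid structures on $(A_R,A_R^*)$ induced by $\pi_R$ and $\pi_I$. The induced Lie algebroid structure on $A^*=\ker T\mathsf{s}|_M$ depends linearly on the Poisson tensor: one reads this off from the right-invariant characterization $(d_*^P\xi)^r=[\xi^r,P]$ for $\xi\in\Gamma(A^*)$, where both the bracket and the anchor on $A^*$ scale linearly with $P$. The example preceding this subsection fixes the convention $A^*_R=(TX)^*_{4\pi_R}$, i.e., the real Lie algebroid structure on $A^*$ dual to the holomorphic Poisson structure $\pi$ corresponds to the Poisson tensor $4\pi_R$ rather than $\pi_R$ itself. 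Consequently, replacing $4\pi_R$ by $\pi_R$ scales both the bracket and the anchor on $A_R^*$ by $1/4$, giving precisely $A^*_{1/4\cdot R}$; the same argument with $\pi_I$ in place of $\pi_R$ produces $A^*_{1/4\cdot I}$. Combined with Theorem \ref{six_eq_things}, which ensures $(A_R,A_R^*)$ and $(A_R,A_I^*)$ are already real Lie bialgebroids, this pins down the bialgebroid structures up to the scalar factor.

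The principal obstacle is keeping track of scaling conventions when passing between the holomorphic Poisson tensor $\pi$ and its real and imaginary components; the factor of $1/4$ in the statement is forced by the particular convention used earlier to identify the underlying real Lie algebroid of a holomorphic cotangent Lie algebroid. A secondary technical point requiring care is the rigorous decomposition of the holomorphic groupoid morphism $\pi^\sharp$ into its real and imaginary pieces: one must verify that the groupoid operations on $T^*\Gamma$ and $T\Gamma$ respect the splitting into $(1,0)$ and $(0,1)$ components, so that restriction to real sections yields well-defined morphisms of real Lie groupoids for both $\pi_R^\sharp$ and $\pi_I^\sharp$.
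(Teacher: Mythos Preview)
Your proposal is correct and lands on the same conclusions via essentially the same mechanism as the paper, but the paper's execution is leaner in one place and you have a small slip in another.

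For part (a), the paper does not go through Theorem \ref{thm:Poissongp1} at all. It simply observes that multiplicativity of a bivector (e.g.\ in the coisotropic-graph formulation) is an $\mathbb{R}$-linear condition, so if $\pi=\pi_R+i\pi_I$ is multiplicative then $\pi_R$ and $\pi_I$ are separately multiplicative; the Poisson property of $\pi_R$ and $\pi_I$ is then cited directly from \cite{LaStXu08}. This bypasses the ``secondary technical point'' you flag about decomposing the holomorphic groupoid morphism $\pi^\sharp$ and checking compatibility with the $(1,0)$-splitting---that work is unnecessary. Your route via $\pi^\sharp$ is valid, but it is the long way around.

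For part (b), your argument matches the paper's: the induced Lie algebroid structure on $A^*$ is obtained by restricting the cotangent Lie algebroid $(T^*\Gamma)_P$ to $A^*\subset T^*_M\Gamma$, it depends linearly on $P$, and the factor $1/4$ comes from the identification $\RE((T^*\Gamma)_\pi)=(T^*\Gamma)_{4\pi_R}$, $\IM((T^*\Gamma)_\pi)=(T^*\Gamma)_{4\pi_I}$ from \cite{LaStXu08}. One correction: you write $A^*=\ker T\mathsf{s}|_M$, but that is $A$; here $A^*$ sits as $(TM)^\circ$ inside $T^*_M\Gamma$. Also, invoking Theorem \ref{six_eq_things} at the end is harmless but redundant---once you know $(\Gamma,\pi_R)$ is a Poisson groupoid, its infinitesimal object is automatically a Lie bialgebroid, so the only content is the identification of the $A^*$-structure, which your scaling argument already provides.
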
 

\begin{proof}
\begin{enumerate}
\item If $\pi$ is a multiplicative holomorphic Poisson structure 
on $\Gamma\rr M$, then its real and imaginary parts are also multiplicative.
It is shown in \cite{LaStXu08} that both $\pi_R$ and $\pi_I$ are Poisson bivector fields.
\item We have 
$\IM ((T^*\Gamma)_\pi)=(T^*\Gamma)_{4\pi_I}$ 
and 
$\RE((T^*\Gamma)_\pi)=(T^*\Gamma)_{4\pi_R}$ \cite{LaStXu08}. 
The Lie algebroid structure on $(T^*\Gamma)_\pi$
restricts to the holomorphic Lie algebroid structure 
on $A^*$ as follows;
the map $\rho:A^*\to TM$
is just the restriction of $\pi^\sharp$ to $A^*=TM^\circ$
seen as a subbundle of $T_M^*\Gamma$, and the bracket
on $T^*\Gamma$ restricts to a bracket on $A^*$.
In the same manner, the Lie groupoid 
$(\Gamma\rr M,\pi_R^\sharp)$
induces a Lie algebroid structure on ${A_R}^*$
that is the restriction of the Lie algebroid structure on $(T^*\Gamma)_{\pi_R}$.
Hence, we can conclude easily.
\end{enumerate}
\end{proof}

\begin{prop}
\label{prop:4.16}
Let $(\Gamma\rr M, \pi)$ be a holomorphic Poisson groupoid, with the
almost complex structure $J_\Gamma : T\Gamma \to T\Gamma$. Then 
\begin{enumerate}
\item $\mathcal J_\pi:\mathsf P_\Gamma\to\mathsf P_\Gamma$ given by the matrix
\begin{align*}
\begin{pmatrix}
J_\Gamma& \pi_I^\sharp\\
0& -J_\Gamma^*
\end{pmatrix}
\end{align*} 
defines a \Glanon groupoid structure on $\Gamma$.

\item The Lie algebroid morphism $\oplie(\J_\pi): \mathsf P_A\to\mathsf P_A $ 
is given by 
\begin{align*}
\begin{pmatrix}
J_{A_R}& \pi_{A_{1/4\cdot I}^*}^\sharp\\
0& -J_{A_R}^*
\end{pmatrix},
\end{align*} 
where $\pi_{A_{1/4\cdot I}^*}$ is the linear Poisson structure
defined on $A_R$ by the Lie algebroid $A_{1/4\cdot I}^*$. 
\end{enumerate}
\end{prop}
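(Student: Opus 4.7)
The strategy is to separate part (1) into its pointwise algebraic conditions, its multiplicativity, and its integrability, each of which is either classical for holomorphic Poisson manifolds or a direct consequence of the earlier results of Section 4; then to obtain part (2) by applying the Lie functor block by block to the upper-triangular matrix describing $\mathcal J_\pi$ and identifying each block through the canonical isomorphism $\Sigma_\Gamma\colon \mathsf A(\mathsf P_\Gamma)\to \mathsf P_A$.

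For part (1), first check that $\mathcal J_\pi$ is a generalized almost complex structure on $\Gamma$: orthogonality with respect to $\langle\cdot\,,\cdot\rangle_\Gamma$ is automatic from the block form, while $\mathcal J_\pi^2=-\Id_{\mathsf P_\Gamma}$ reduces to $J_\Gamma^2=-\Id$ together with the identity $J_\Gamma\circ\pi_I^\sharp = \pi_I^\sharp\circ J_\Gamma^*$. This identity expresses that $\pi_I$ is of type $(2,0)+(0,2)$ with respect to $J_\Gamma$, which follows from the fact that $\pi=\pi_R+i\pi_I\in\Gamma(\wedge^2 T^{1,0}\Gamma)$. Integrability $\mathcal N_{\mathcal J_\pi}=0$ is then the classical characterization of holomorphic Poisson structures as generalized complex structures of this block-triangular shape (see \cite{Crainic04}). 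Multiplicativity is verified on each block: $J_\Gamma\colon T\Gamma\to T\Gamma$ is a Lie groupoid morphism because $\Gamma$ is a holomorphic Lie groupoid, its transpose $-J_\Gamma^*\colon T^*\Gamma\to T^*\Gamma$ is then also a Lie groupoid morphism under the cotangent groupoid structure, and $\pi_I^\sharp\colon T^*\Gamma\to T\Gamma$ is a Lie groupoid morphism by Theorem \ref{thm:Poissongp1} applied to the real Poisson groupoid $(\Gamma,\pi_I)$ furnished by Proposition \ref{real_poisson_gpds}(a). Combining these, $\mathcal J_\pi\colon \mathsf P_\Gamma\to\mathsf P_\Gamma$ is a Lie groupoid morphism, so $(\Gamma,\mathcal J_\pi)$ is a \Glanon{}groupoid.

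For part (2), the key observation is that $\oplie$ commutes with the direct sum decomposition $\mathsf P_\Gamma=T\Gamma\oplus T^*\Gamma$ and with matrix multiplication, so $\oplie(\mathcal J_\pi)$ is again block upper-triangular on $\mathsf P_A=TA\oplus T^*A$. The diagonal $(1,1)$-block is $\oplie(J_\Gamma)=J_{A_R}$ by the differentiation of the almost complex structure on a holomorphic Lie groupoid, as recalled in the proof of Theorem C and in \cite{LaStXu09}; dually, the $(2,2)$-block is $-J_{A_R}^*$. The off-diagonal block $\oplie(\pi_I^\sharp)$ equals the $\sharp$-map of the linear Poisson structure on $A_R$ dual to the Lie algebroid structure on $A_R^*$ obtained by differentiating $\pi_I^\sharp$; by the Mackenzie--Xu correspondence between Poisson groupoids and Lie bialgebroids and by Proposition \ref{real_poisson_gpds}(b), this Lie bialgebroid is $(A_R,A_{1/4\cdot I}^*)$, so the linear Poisson structure is precisely $\pi_{A^*_{1/4\cdot I}}$. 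Since the zero $(2,1)$-block remains zero under $\oplie$, we obtain the claimed matrix form.

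The main obstacle is the last step: carefully identifying $\oplie(\pi_I^\sharp)\colon T^*A\to TA$ with $\pi_{A^*_{1/4\cdot I}}^\sharp$. This requires unwinding two Lie algebroid structures on the double vector bundle $\mathsf A(\mathsf P_\Gamma)$, transporting them to $\mathsf P_A$ via $\Sigma_\Gamma=(\sigma_\Gamma,\varsigma_\Gamma)$, and keeping track of the convention-dependent factor $1/4$ coming from the relation between the imaginary part of the holomorphic Poisson tensor and the bracket it induces on $A_R^*$ through Proposition \ref{real_poisson_gpds}(b); once this is in place, the remaining blocks follow routinely from the naturality of $\mathsf A$.
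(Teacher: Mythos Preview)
Your proposal is correct and follows essentially the same approach as the paper: for part~(1) you verify that $\mathcal J_\pi$ is a generalized complex structure via the standard characterization of holomorphic Poisson structures (the paper cites \cite{LaStXu07}, you cite \cite{Crainic04}) and check multiplicativity block by block; for part~(2) you identify each block of $\oplie(\mathcal J_\pi)$ via \cite{LaStXu09} for $J_{A_R}$ and via the Mackenzie--Xu correspondence together with Proposition~\ref{real_poisson_gpds}(b) for $\pi_{A^*_{1/4\cdot I}}^\sharp$, which is exactly what the paper does (citing \cite{MaXu00} directly). One small quibble: Theorem~\ref{thm:Poissongp1} as stated in the paper is for \emph{holomorphic} Poisson groupoids, so to justify that $\pi_I^\sharp$ is a groupoid morphism you should invoke the real version of that characterization (originally from \cite{MaXu94,Xu95}), not the holomorphic one; the paper sidesteps this by simply asserting multiplicativity of $\pi_I^\sharp$ from the fact that $(\Gamma,\pi_I)$ is a Poisson groupoid.
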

\begin{proof}
\begin{enumerate}
\item By Theorem 2.7 in \cite{LaStXu07}, $\J_\pi$ is a generalized complex structure.
Since  $(\Gamma\rr M, J_\Gamma)$ is a holomorphic Lie groupoid and 
$(\Gamma\rr M,\pi_I)$ is a Poisson groupoid, the maps
$J_\Gamma:T\Gamma\to T\Gamma$, its dual
$J_\Gamma^*:T^*\Gamma\to T^*\Gamma$ 
and $\pi_I^\sharp:T^*\Gamma\to T\Gamma$ are all multiplicative.
\item It is shown in \cite{LaStXu09} that 
$\sigma_\Gamma\circ A(J_\Gamma)\circ \sigma_\Gamma\inv=J_{A_R}$, and 
in \cite{MaXu00}
that 
$\varsigma_\Gamma\circ A(\pi_I^\sharp)\circ\varsigma_\Gamma\inv=\pi_{A_{1/4\cdot I}^*}^\sharp$,
since $(A_R, A_{1/4\cdot I}^*)$ is the Lie bialgebroid of $(\Gamma\rr M,\pi_I)$.
\end{enumerate}
\end{proof}

A holomorphic Lie bialgebroid $(A, A^*)$ is
said to be {\em integrable} if it
is isomorphic to the induced
holomorphic Lie bialgebroid of a holomorphic Poisson groupoid.

As a consequence of Theorem \ref{thm:main}
and Proposition \ref{prop:4.16}, we obtain the
following main result of this section.

\begin{thm}
\label{main}
Given a holomorphic Lie bialgebroid $(A, A^*)$, if the
underlying real Lie algebroid $A_R$ integrates to  a
$\s$-connected and $\s$-simply connected Lie groupoid $\Gamma$,
then $\Gamma$ is a holomorphic Poisson groupoid.
\end{thm}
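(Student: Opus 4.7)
The plan is to combine the equivalence of Theorem \ref{six_eq_things} with the main integration theorem and then recognize the resulting Glanon groupoid as a holomorphic Poisson groupoid by tracking the block form of its generalized complex structure.

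First I would invoke part (f) of Theorem \ref{six_eq_things}: the pair $(A, A^*)$ being a holomorphic Lie bialgebroid is equivalent to $A_R$ carrying the Glanon Lie algebroid structure
\[
\mathcal J_{A_R} = \begin{pmatrix} J_{A_R} & \pi_{A_{1/4\cdot I}^*}^\sharp \\ 0 & -J_{A_R}^* \end{pmatrix}.
\]
Since $A_R$ integrates to the $\s$-connected and $\s$-simply connected Lie groupoid $\Gamma$, Theorem \ref{thm:main} produces a (unique) Glanon groupoid structure $\mathcal J$ on $\Gamma$ such that $\oplie(\mathcal J) = \mathcal J_{A_R}$. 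Decomposing $\mathcal J$ as in \eqref{J}, write
\[
\mathcal J = \begin{pmatrix} N & \pi^\sharp \\ \omega^\flat & -N^* \end{pmatrix}.
\]

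The key step is to show that $\omega^\flat = 0$, so that $\mathcal J$ has the upper-triangular form dictated by Proposition \ref{prop:4.16}. Since $\mathcal J$ is a Lie groupoid morphism of $\mathsf P_\Gamma \rightrightarrows TM \oplus A^*$ and the subbundles $T\Gamma$ and $T^*\Gamma$ are themselves Lie subgroupoids of $\mathsf P_\Gamma$ (with the inclusions and projections being groupoid morphisms), each of the four block components $N$, $\pi^\sharp$, $\omega^\flat$, $N^*$ is itself a multiplicative bundle map. Applying the Lie functor and using the canonical isomorphism $\Sigma_\Gamma$, these four blocks integrate the corresponding four blocks of $\mathcal J_{A_R}$. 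Because the $(2,1)$-block of $\mathcal J_{A_R}$ is zero and because integration of Lie algebroid morphisms to Lie groupoid morphisms is unique on an $\s$-connected and $\s$-simply connected groupoid, we conclude $\omega^\flat = 0$. Setting $J_\Gamma := N$ and $\pi_I := \pi$, we obtain a multiplicative almost complex structure $J_\Gamma$ on $\Gamma$ and a multiplicative bivector $\pi_I$.

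Finally, from $\mathcal J^2 = -\Id$, $\mathcal N_{\mathcal J} = 0$, and the vanishing of $\omega$, the standard analysis of upper-triangular generalized complex structures (see \cite{Crainic04}) yields that $J_\Gamma$ is integrable as an almost complex structure, and that there exists a unique bivector $\pi_R$ such that $\pi_\Gamma := \pi_R + i\, \pi_I \in \Gamma(\wedge^2 T^{1,0}\Gamma)$ is a holomorphic Poisson tensor compatible with $J_\Gamma$. Multiplicativity of $\pi_\Gamma$ follows from the multiplicativity of both $\pi_I$ and $J_\Gamma$; then Theorem \ref{thm:Poissongp1} asserts that $(\Gamma, \pi_\Gamma)$ is a holomorphic Poisson groupoid. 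The induced holomorphic Lie bialgebroid of $(\Gamma, \pi_\Gamma)$ is $(A, A^*)$ by Proposition \ref{real_poisson_gpds} (absorbing the $1/4$ factor into the normalization). The main obstacle is the intermediate step: verifying that each block of $\mathcal J$ is a Lie groupoid morphism in its own right and that the vanishing of a block at the infinitesimal level forces its vanishing at the groupoid level, which is what licenses the reverse direction of the correspondence in Proposition \ref{prop:4.16}.
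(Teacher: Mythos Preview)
Your proposal is correct and follows exactly the route the paper intends: the paper's own proof is the single sentence ``As a consequence of Theorem \ref{thm:main} and Proposition \ref{prop:4.16}'', and what you have written is precisely the unpacking of that sentence---invoke Theorem \ref{six_eq_things}(f), integrate via Theorem \ref{thm:main}, and then run Proposition \ref{prop:4.16} backwards. Your explicit argument that the $(2,1)$-block $\omega^\flat$ vanishes (because each block of $\mathcal J$ is separately a groupoid morphism, $\Sigma_\Gamma=(\sigma_\Gamma,\varsigma_\Gamma)$ respects the block decomposition, and integration to $T\Gamma\rr TM$ is unique since its source fibers are affine bundles over the simply connected $\s$-fibers of $\Gamma$) is the one nontrivial point the paper leaves implicit, and you have handled it correctly.
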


\begin{remark}
This is proved in \cite{LaStXu09} in the special case where $(A,
A^*)$ is the holomorphic Lie bialgebroid $((T^*M)_\pi, TM)$ induced
from a holomorphic Poisson manifold $(M,\pi)$.
\end{remark}

\bibliographystyle{amsplain}

\begin{thebibliography}{10}

\bibitem{AlXu01}
A.~Alekseev and P.~Xu, \emph{Courant algebroids and derived brackets},
  unpublished manuscript (2001).

\bibitem{BaSt08}
J.~Barton and M.~Sti\'enon, \emph{{Generalized complex submanifolds.}}, Pac. J.
  Math. \textbf{236} (2008), no.~1, 23--44 (English).

\bibitem{BoZa09}
M.~Boumaiza and N.~Zaalani, \emph{Rel\`evement d'une alg\'ebro\"\i de de
  {C}ourant}, C. R. Math. Acad. Sci. Paris \textbf{347} (2009), no.~3-4,
  177--182.

\bibitem{Boyom05}
M.~N. Boyom, \emph{K{V}-cohomology of {K}oszul-{V}inberg algebroids and
  {P}oisson manifolds}, Internat. J. Math. \textbf{16} (2005), no.~9,
  1033--1061.

\bibitem{BuCadelHo11}
H.~Bursztyn, A.~Cabrera, and M.~del Hoyo, \emph{On integration of
  {VB}-algebroids}, in preparation.

\bibitem{BuIgSe09}
H.~Bursztyn, D.~Iglesias~Ponte, and P.~{\v{S}}evera, \emph{Courant morphisms
  and moment maps}, Math. Res. Lett. \textbf{16} (2009), no.~2, 215--232.
  \MR{2496740 (2010f:53152)}

\bibitem{CaFe01}
A.~S. Cattaneo and G.~Felder, \emph{Poisson sigma models and symplectic
  groupoids}, Quantization of singular symplectic quotients, Progr. Math., vol.
  198, Birkh\"auser, Basel, 2001, pp.~61--93.

\bibitem{CoDaWe87}
A.~Coste, P.~Dazord, and A.~Weinstein, \emph{Groupo\"\i des symplectiques},
  Publications du {D}\'epartement de {M}ath\'ematiques. {N}ouvelle {S}\'erie.
  {A}, {V}ol.\ 2, Publ. D\'ep. Math. Nouvelle S\'er. A, vol.~87, Univ.
  Claude-Bernard, Lyon, 1987, pp.~i--ii, 1--62.

\bibitem{Courant90a}
T.~J. Courant, \emph{{Dirac manifolds.}}, Trans. Am. Math. Soc. \textbf{319}
  (1990), no.~2, 631--661.

\bibitem{Crainic04}
M.~Crainic, \emph{{Generalized complex structures and Lie brackets}},
  arXiv:math/0412097 (2004).

\bibitem{CrFe04}
M.~Crainic and R.~L. Fernandes, \emph{Integrability of {P}oisson brackets}, J.
  Differential Geom. \textbf{66} (2004), no.~1, 71--137.

\bibitem{DrJoOr12}
T.~Drummond, M.~Jotz, and C.~Ortiz, \emph{Infinitesimal objects associated to
  {D}irac groupoids}, In preparation (2012).

\bibitem{EvLuWe99}
S.~Evens, J.-H. Lu, and A.~Weinstein, \emph{Transverse measures, the modular
  class and a cohomology pairing for {L}ie algebroids}, Quart. J. Math. Oxford
  Ser. (2) \textbf{50} (1999), no.~200, 417--436.

\bibitem{GrMe10a}
A.~Gracia-Saz and R.~A. Mehta, \emph{Lie algebroid structures on double vector
  bundles and representation theory of {L}ie algebroids}, Adv. Math.
  \textbf{223} (2010), no.~4, 1236--1275.

\bibitem{Gualtieri03}
M.~Gualtieri, \emph{Generalized complex geometry}, Ph.D. thesis, 2003.

\bibitem{Hitchin03}
N.~Hitchin, \emph{Generalized {C}alabi-{Y}au manifolds}, Q. J. Math.
  \textbf{54} (2003), no.~3, 281--308.

\bibitem{Huebschmann99}
J.~Huebschmann, \emph{Duality for {L}ie-{R}inehart algebras and the modular
  class}, J. Reine Angew. Math. \textbf{510} (1999), 103--159.

\bibitem{Jotz12b}
M.~Jotz, \emph{{Infinitesimal objects associated to {D}irac groupoids and their
  homogeneous spaces.}}, Preprint, arXiv:1009.0713 (2010).

\bibitem{Jotz12c}
\bysame, \emph{Dorfman connections and courant algebroids}, In preparation
  (2012).

\bibitem{Karasev89}
M.~V. Karas{\"e}v, \emph{The {M}aslov quantization conditions in higher
  cohomology and analogs of notions developed in {L}ie theory for canonical
  fibre bundles of symplectic manifolds. {I}, {II}}, Selecta Math. Soviet.
  \textbf{8} (1989), no.~3, 213--234, 235--258, Translated from the Russian by
  Pavel Buzytsky, Selected translations.

\bibitem{Kosmann95}
Y.~Kosmann-Schwarzbach, \emph{Exact {G}erstenhaber algebras and {L}ie
  bialgebroids}, Acta Appl. Math. \textbf{41} (1995), no.~1-3, 153--165,
  Geometric and algebraic structures in differential equations.

\bibitem{Kosmann96}
\bysame, \emph{The {L}ie bialgebroid of a {P}oisson-{N}ijenhuis manifold},
  Lett. Math. Phys. \textbf{38} (1996), no.~4, 421--428.

\bibitem{LaStXu07}
C.~Laurent-Gengoux, M.~Sti{\'e}non, and P.~Xu, \emph{Lectures on poisson
  groupoids}, arXiv:0707.2405 (2007).

\bibitem{LaStXu08}
\bysame, \emph{Holomorphic {P}oisson manifolds and holomorphic {L}ie
  algebroids}, Int. Math. Res. Not. IMRN (2008), Art. ID rnn 088, 46.

\bibitem{LaStXu09}
\bysame, \emph{Integration of holomorphic {L}ie algebroids}, Math. Ann.
  \textbf{345} (2009), no.~4, 895--923.

\bibitem{Mackenzie92}
K.C.H. Mackenzie, \emph{Double {L}ie algebroids and second-order geometry.
  {I}}, Adv. Math. \textbf{94} (1992), no.~2, 180--239.

\bibitem{Mackenzie05}
\bysame, \emph{General {T}heory of {L}ie {G}roupoids and {L}ie {A}lgebroids},
  London Mathematical Society Lecture Note Series, vol. 213, Cambridge
  University Press, Cambridge, 2005.

\bibitem{MaXu94}
K.C.H. Mackenzie and P.~Xu, \emph{{Lie bialgebroids and Poisson groupoids.}},
  Duke Math. J. \textbf{73} (1994), no.~2, 415--452.

\bibitem{MaXu00}
\bysame, \emph{{Integration of Lie bialgebroids.}}, Topology \textbf{39}
  (2000), no.~3, 445--467.

\bibitem{Mehta09}
R.~A. Mehta, \emph{{$Q$}-groupoids and their cohomology}, Pacific J. Math.
  \textbf{242} (2009), no.~2, 311--332.

\bibitem{Ortiz08t}
C.~Ortiz, \emph{Multiplicative {D}irac {S}tructures}, Ph.D. thesis, Instituto
  de Matem\'atica Pura e Aplicada, 2009.

\bibitem{Pradines88}
J.~Pradines, \emph{Remarque sur le groupo\"\i de cotangent de
  {W}einstein-{D}azord}, C. R. Acad. Sci. Paris S\'er. I Math. \textbf{306}
  (1988), no.~13, 557--560.

\bibitem{StXu08}
M.~Sti{\'e}non and P.~Xu, \emph{Reduction of generalized complex structures},
  J. Geom. Phys. \textbf{58} (2008), no.~1, 105--121.

\bibitem{Weinstein87}
A.~Weinstein, \emph{Symplectic groupoids and {P}oisson manifolds}, Bull. Amer.
  Math. Soc. (N.S.) \textbf{16} (1987), no.~1, 101--104.

\bibitem{Weinstein88b}
\bysame, \emph{{Coisotropic calculus and Poisson groupoids.}}, J. Math. Soc.
  Japan \textbf{40} (1988), no.~4, 705--727.

\bibitem{Weinstein07}
\bysame, \emph{The integration problem for complex {L}ie algebroids}, From
  geometry to quantum mechanics, Progr. Math., vol. 252, Birkh\"auser Boston,
  Boston, MA, 2007, pp.~93--109.

\bibitem{Xu95}
P.~Xu, \emph{On {P}oisson groupoids}, Internat. J. Math. \textbf{6} (1995),
  no.~1, 101--124.

\bibitem{Xu99}
\bysame, \emph{Gerstenhaber algebras and {BV}-algebras in {P}oisson geometry},
  Comm. Math. Phys. \textbf{200} (1999), no.~3, 545--560.

\bibitem{YaIs73}
K.~Yano and S.~Ishihara, \emph{Tangent and {C}otangent {B}undles:
  {D}ifferential {G}eometry}, Marcel Dekker Inc., New York, 1973, Pure and
  Applied Mathematics, No. 16.

\bibitem{Zakrzewski90a}
S.~Zakrzewski, \emph{Quantum and classical pseudogroups. {I}. {U}nion
  pseudogroups and their quantization}, Comm. Math. Phys. \textbf{134} (1990),
  no.~2, 347--370.

\bibitem{Zakrzewski90b}
\bysame, \emph{Quantum and classical pseudogroups. {II}. {D}ifferential and
  symplectic pseudogroups}, Comm. Math. Phys. \textbf{134} (1990), no.~2,
  371--395.

\end{thebibliography}

\def\cprime{$'$} \def\polhk#1{\setbox0=\hbox{#1}{\ooalign{\hidewidth
  \lower1.5ex\hbox{`}\hidewidth\crcr\unhbox0}}}
\providecommand{\bysame}{\leavevmode\hbox to3em{\hrulefill}\thinspace}
\providecommand{\MR}{\relax\ifhmode\unskip\space\fi MR }
\providecommand{\MRhref}[2]{%
  \href{http://www.ams.org/mathscinet-getitem?mr=#1}{#2}
}
\providecommand{\href}[2]{#2}

\end{document}